\documentclass[11pt, a4paper]{amsart}
\usepackage[utf8]{inputenc}
\usepackage{amsmath,amsthm,amssymb,amscd,faktor,float,mathdots,mathtools,enumerate,shuffle,hyperref,xcolor,enumitem,multicol,soul,parskip}
\usepackage{tikz, float, tikz-cd} 
	\usetikzlibrary{arrows}
\hypersetup{pdfborder={0 0 0}} 
\usepackage[
    a4paper,
    top=2.5cm,
    bottom=2.5cm,
    left=2.5cm,
    right=2.5cm,
    includeheadfoot,
    headheight=14pt,
    footskip=1cm,
    marginparwidth=3cm,
    marginparsep=0.5cm
]{geometry}

\newtheorem{theorem}{Theorem}[section]
\newtheorem{thm}{Theorem}

\newtheorem{proposition}[theorem]{Proposition}
\newtheorem{prop}[thm]{Proposition}
\newtheorem{propdef}[theorem]{Proposition-Definition}
\newtheorem{lemma}[theorem]{Lemma}
\newtheorem{corollary}[theorem]{Corollary}

\newtheorem{conj}[thm]{Conjecture}
\numberwithin{equation}{section}
\newtheorem*{theorem*}{Theorem}
\theoremstyle{definition}
\newtheorem{definition}[theorem]{Definition}

\newtheorem{remark}[theorem]{Remark}
\newtheorem*{acknowledgments}{Acknowledgments}

\newcommand{\Q}{\mathbb{Q}}

\newcommand{\MZV}{\mathcal{Z}}
\newcommand{\grMZV}{\operatorname{gr}_D\MZV}
\newcommand{\QX}{\Q\langle X\rangle}
\newcommand{\QY}{\Q\langle Y\rangle}
\newcommand{\ls}{\mathfrak{ls}}

\newcommand{\qMZV}{\mathcal{Z}_q}
\newcommand{\grqMZV}{\operatorname{gr}_D\mathcal{Z}_q}
\newcommand{\QB}{\Q\langle B\rangle}
\newcommand{\lqq}{\mathfrak{lq}}

\newcommand{\ad}{\operatorname{ad}}
\newcommand{\id}{\operatorname{id}}

\newcommand{\Lie}{\mathfrak{Lie}}
\newcommand{\stab}{\mathfrak{stab}}
\newcommand{\kernel}{\operatorname{ker}(\gamma_0)}

\newlist{steplist}{enumerate}{1}
\setlist[steplist]{label=\textbf{Step \arabic*}, align=left, wide=5pt, leftmargin=5pt}

\newlist{caselist}{enumerate}{1}
\setlist[caselist]{label=\textbf{Case \arabic*}, align=left, wide=5pt, leftmargin=5pt}

\title[A stabilizer interpretation of the linearized double shuffle Lie algebra]{A stabilizer interpretation of the (extended) linearized double shuffle Lie algebra}
\author{Annika Burmester}
\address{\scriptsize Faculty of Mathematics, Bielefeld University, 33615 Bielefeld, Germany.}
\email{\href{mailto:aburmester@math.uni-bielefeld.de}{aburmester@math.uni-bielefeld.de}}
\author{Khalef Yaddaden}
\address{\scriptsize Laboratoire Mathématique Nicolas Oresme (UMR 6139), University of Caen-Normandie, 6 Boulevard Maréchal Juin, 14000 Caen, France.}
\email{khalef.yaddaden@unicaen.fr}

\date{February 2026}

\subjclass[2020]{11M32, 
05A30, 
17B70
}
\keywords{multiple zeta values, q-analogues, linearized double shuffle relations, stabilizer Lie algebras.}

\begin{document}

\begin{abstract}
   The linearized double shuffle Lie algebra introduced by Brown reflects the depth-graded structure of multiple zeta values. In a previous paper, the first author introduced an extension of this Lie algebra that accommodates multiple q-zeta values and multiple Eisenstein series. Inspired by the stabilizer interpretation of the double shuffle Lie algebra given by Enriquez and Furusho, we provide in this paper a stabilizer interpretation of both Lie algebras and show that the stabilizers preserve the extension from the first linearized Lie algebra to the second one.
\end{abstract}

\maketitle

{\footnotesize \tableofcontents}

\section*{Introduction}

\textbf{Multiple zeta values.} Let $\MZV$ be the $\Q$-algebra of \emph{multiple zeta values}, so $\MZV$ is spanned by the elements
\begin{align*}
\zeta(k_1,\ldots,k_d)=\sum_{n_1>\cdots>n_d>0}\frac{1}{n_1^{k_1}\cdots n_d^{k_d}},\qquad k_1\geq2,\ k_2,\ldots,k_d\geq1.
\end{align*}
Here, $k_1+\cdots+k_d$ is the \emph{weight} and $d$ is the \emph{depth}.

A key feature of the algebra $\MZV$ is the two different product expressions. First, let $\QX$ be the non-commutative free algebra generated by the alphabet $X=\{x_0,x_1\}$. The \emph{shuffle product} on $\QX$ is given by $1\shuffle w=w\shuffle 1 =w$ and
\begin{align*}
av\shuffle bw=a(v\shuffle bw)+b(av\shuffle w),\qquad a,b\in X,\ v,w\in \QX.
\end{align*}
Let $\mathfrak{H}^0$ be the subspace of $\QX$, which is spanned by $1$ and all words starting in $x_0$ and ending in $x_1$.
\begin{prop} \label{prop:shuffle}
There is a surjective algebra morphism
\[
(\mathfrak{H}^0,\shuffle)\to\MZV,\quad x_0^{k_1-1}x_1\cdots x_0^{k_d-1}x_1\mapsto \zeta(k_1,\ldots,k_d),
\]
which has a unique extension to $\QX$ such that $x_0\mapsto 0$ and $x_1\mapsto 0$.
\end{prop}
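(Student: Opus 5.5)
The plan is to realize the map through iterated integrals and then extend it by regularization.

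\textbf{Iterated integrals.} Set $\omega_0(t)=dt/t$ and $\omega_1(t)=dt/(1-t)$. To a word $w=a_1\cdots a_k$ in $x_0,x_1$ we attach
\[
I(w)=\int_{1>t_1>\cdots>t_k>0}\omega_{a_1}(t_1)\cdots\omega_{a_k}(t_k),
\]
writing $\omega_{a_j}$ for $\omega_0$ or $\omega_1$ according to the letter. When $w$ starts in $x_0$ and ends in $x_1$, this integral converges: near $t_1=1$ the form $dt/t$ is harmless, and near $t_k=0$ the form $dt/(1-t)$ is harmless.

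\textbf{Identification with $\zeta$.} Take $w=x_0^{k_1-1}x_1\cdots x_0^{k_d-1}x_1$. Expand each $1/(1-t)=\sum_{n\ge1}t^{n-1}$ and integrate from the innermost variable outward. This is the standard computation and gives
\[
I(w)=\sum_{n_1>\cdots>n_d>0}\frac{1}{n_1^{k_1}\cdots n_d^{k_d}}=\zeta(k_1,\ldots,k_d).
\]
Hence the linear map $w\mapsto I(w)$ lands in $\MZV$. It is surjective, since every spanning element $\zeta(k_1,\ldots,k_d)$ is hit.

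\textbf{Multiplicativity.} Compatibility with $\shuffle$ follows from the decomposition of a product of two simplices into simplices indexed by shuffles. For words $u,v$ of lengths $r,s$, the product $\Delta_r\times\Delta_s$ equals, up to measure zero, the union of the $\binom{r+s}{r}$ simplices obtained by interleaving the two chains of variables. Integrating the forms over each piece gives $I(u)I(v)=I(u\shuffle v)$. The right-hand side is a combination of convergent words, because shuffles of words in $\mathfrak{H}^0$ still start in $x_0$ and end in $x_1$, so $\mathfrak{H}^0$ is a subalgebra. Fubini is justified by absolute convergence, since all integrands are nonnegative.

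\textbf{Extension to $\QX$.} I use the structure of the shuffle algebra: $(\QX,\shuffle)\cong\mathfrak{H}^0[x_0,x_1]$, a polynomial algebra over $\mathfrak{H}^0$ in $x_0$ and $x_1$. This is the main step. I would prove it as follows. First, $\QX=\mathfrak{H}^1[x_0]$, where $\mathfrak{H}^1=\Q1+\QX x_1$. Second, $\mathfrak{H}^1=\mathfrak{H}^0[x_1]$.

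For the first claim, a word $w$ ending in $x_0^m$ is rewritten by the identity $w'x_0^m=(w'\shuffle x_0^m)-(\text{terms with fewer trailing }x_0)$. Induction on $m$ then writes it in the required form, and the symmetric argument handles leading $x_1$. For algebraic independence I would use that the shuffle algebra is a polynomial algebra on Lyndon words, by Radford's theorem, with $x_0$ and $x_1$ among the Lyndon generators, and with the remaining Lyndon-type generators spanning a complement lying in $\mathfrak{H}^0$. Explicitly, one checks that the map $\mathfrak{H}^0\otimes\Q[x_0]\otimes\Q[x_1]\to\QX$, $a\otimes b\otimes c\mapsto a\shuffle b\shuffle c$, is bijective by a leading-term (triangularity) argument on the number of leading $x_1$ and trailing $x_0$.

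Given this isomorphism, any algebra morphism from $\mathfrak{H}^0$ extends uniquely by assigning values to $x_0$ and $x_1$, here both $0$. Uniqueness is immediate because $x_0,x_1$ together with $\mathfrak{H}^0$ generate $(\QX,\shuffle)$. Existence requires the freeness, that is, the independence part of the isomorphism, which is where I expect the real work to lie.
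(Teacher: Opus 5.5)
The paper gives no proof of this proposition. It is quoted in the introduction as standard background: the iterated-integral representation of $\zeta(k_1,\ldots,k_d)$, plus the shuffle regularization of Ihara--Kaneko--Zagier \cite{IKZ}, which rests on Reutenauer's structure theorem $(\QX,\shuffle)\cong\mathfrak{H}^0[x_0,x_1]$. Your proposal reconstructs this standard argument, and it is correct. Two points need tightening.

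\emph{Convergence.} Convergence of $I(w)$ is not really settled by inspecting the forms at $t_1=1$ and $t_k=0$, since interior variables can also accumulate at the endpoints. Let it come out of your series computation instead. That computation is legitimate by monotone convergence, because all integrands are nonnegative, and it produces the finite value $\zeta(k_1,\ldots,k_d)$ for $k_1\ge 2$.

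\emph{Independence.} Radford's theorem alone does not finish this step. It makes $x_0,x_1$ algebraically independent over the subalgebra generated by Lyndon words of length $\ge 2$, and these words do lie in $\mathfrak{H}^0$. But you also need that subalgebra to be all of $\mathfrak{H}^0$, equivalently $\mathfrak{H}^0\cap(x_0\shuffle\QX+x_1\shuffle\QX)=0$. That is exactly what well-definedness of the extension $x_0,x_1\mapsto 0$ requires.

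Your triangularity argument supplies this, so Radford is not needed. Every word factors uniquely as $x_1^b w x_0^a$, with $w$ empty or a word beginning in $x_0$ and ending in $x_1$. Because $w$ has this shape, the only interleaving in $w\shuffle x_0^a\shuffle x_1^b$ with $b$ leading $x_1$'s and $a$ trailing $x_0$'s is $x_1^b w x_0^a$, with multiplicity one. Every other term has at most $b$ leading $x_1$'s and at most $a$ trailing $x_0$'s, not both equal, so strictly smaller total. Hence the map $\mathfrak{H}^0\otimes\Q[x_0]\otimes\Q[x_1]\to\QX$ is unitriangular in each weight, and therefore bijective.

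A quicker alternative for injectivity: combine your spanning argument with the Hilbert series identity $\frac{(1-t)^2}{1-2t}\cdot\frac{1}{(1-t)^2}=\frac{1}{1-2t}$. Here $\frac{(1-t)^2}{1-2t}$ is the series of $\mathfrak{H}^0$ and $\frac{1}{1-2t}$ that of $\QX$, so a surjection between finite-dimensional weight pieces of equal dimension is bijective.
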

Note that the shuffle product is homogeneous for the weight and the depth.
Next, consider the non-commutative free algebra $\QY$ generated by the alphabet $Y=\{y_1,y_2,\ldots\}$. The \emph{stuffle product} on $\QY$ is defined by $1\ast w=w\ast 1 =w$ and
\begin{align*}
y_iv\ast y_jw=y_i(v\ast y_jw)+y_j(y_iv\ast w)+y_{i+j}(v\ast w),\qquad i,j\geq1,\ v,w\in \QY.
\end{align*}
Let $\QY^0$ be the subspace of $\QY$ spanned by all words, which do not start in $y_1$.
\begin{prop} \label{prop:stuffle}
There is a surjective algebra morphism
\[
(\QY^0,\ast)\to\MZV,\quad y_{k_1}\cdots y_{k_d}\mapsto\zeta(k_1,\ldots,k_d),
\]
which has a unique extension to $\QY$ such that $y_1\mapsto0$.
\end{prop}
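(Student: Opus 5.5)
The plan has three parts: first show that $(\QY^0,\ast)$ is a subalgebra and that the map $\zeta$ is multiplicative on it, then get surjectivity from the definition of $\MZV$, and finally build the extension to $\QY$ with $y_1\mapsto 0$ and prove it is unique.

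\textbf{Subalgebra and multiplicativity.} From the recursive definition, the first letter of every word in $y_iv\ast y_jw$ is $y_i$, $y_j$ or $y_{i+j}$. If $i,j\geq2$, none of these equals $y_1$, so $\QY^0$ is closed under $\ast$. To see that $\zeta$ is multiplicative, I realize $\zeta$ through truncated sums. For $N\geq 0$, set
\[
\zeta_N(y_{k_1}\cdots y_{k_d})=\sum_{N> n_1>\cdots>n_d>0}\frac{1}{n_1^{k_1}\cdots n_d^{k_d}},
\]
extended linearly to all of $\QY$, with $\zeta_N(1)=1$. I will show by induction on the total length of the two words that
\[
\zeta_N(u\ast w)=\zeta_N(u)\zeta_N(w)\qquad\text{for all } u,w\in\QY \text{ and all } N.
\]
For the inductive step, write $u=y_iv$ and $w=y_jw'$. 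In the product $\zeta_N(u)\zeta_N(w)$, split the double sum over the leading indices $n$ (from $u$) and $m$ (from $w$) into the three regions $n>m$, $m>n$ and $n=m$. Each region matches one term of the recursion:
\begin{itemize}
\item the region $n>m$ gives $\sum_{n<N} n^{-i}\,\zeta_n(v)\,\zeta_n(w)$;
\item the region $m>n$ gives the symmetric term $\sum_{m<N} m^{-j}\,\zeta_m(u)\,\zeta_m(w')$;
\item the diagonal $n=m$ gives $\sum_{n<N} n^{-i-j}\,\zeta_n(v)\,\zeta_n(w')$.
\end{itemize}
The induction hypothesis converts each product $\zeta_n(\cdot)\zeta_n(\cdot)$ into $\zeta_n$ of a stuffle product, and the identity $\zeta_N(y_k z)=\sum_{n<N}n^{-k}\zeta_n(z)$ reassembles the three sums into $\zeta_N(u\ast w)$. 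On $\QY^0$ all sums converge as $N\to\infty$, so passing to the limit gives multiplicativity of $\zeta$. Surjectivity is immediate, because the words $y_{k_1}\cdots y_{k_d}$ with $k_1\geq2$ map exactly onto the spanning set of $\MZV$.

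\textbf{Extension.} The step I expect to be the main obstacle is the extension to $\QY$ with $y_1\mapsto 0$. The plan is to prove the structural statement that $(\QY,\ast)\cong\QY^0[y_1]$ as algebras, i.e.\ $y_1$ is algebraically independent over $\QY^0$ and the two generate everything. I would argue by induction on the number $r$ of leading letters $y_1$ in a word $y_1^r w$, where $w$ does not start with $y_1$. By the recursion,
\[
y_1\ast y_1^{r-1}w = r\,y_1^r w + (\text{terms with fewer leading } y_1\text{'s}),
\]
where the lower terms come either from a $y_2$ formed on the diagonal or from $y_1$ stuffed into $w$. Solving this for $y_1^r w$ shows recursively that
\[
y_1^r w=\sum_{s\leq r} c_s\, y_1^{\ast s}\ast w_s,\qquad w_s\in\QY^0 .
\]
This proves generation. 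Independence follows by comparing top terms in the number of leading $y_1$'s. The map $y_1\mapsto0$, together with $\zeta$ on $\QY^0$, then defines the unique algebra morphism on $\QY^0[y_1]$. Uniqueness is forced precisely because $\QY^0$ and $y_1$ generate $\QY$ as an algebra under $\ast$. The delicate point is bookkeeping the lower-order terms so that the triangularity really holds; I would handle it with a lexicographic filtration by the number of leading $y_1$'s, and then by length.
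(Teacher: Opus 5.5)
The paper gives no proof of this proposition: it is recalled in the introduction as standard background, with the extension to $\QY$ being the regularization of Ihara--Kaneko--Zagier \cite{IKZ}. Your argument is correct and is essentially the standard one from the literature. The truncated sums $\zeta_N$ give multiplicativity of the stuffle product on $\QY^0$. The structure result $(\QY,\ast)\cong\QY^0[y_1]$ then gives existence of the extension with $y_1\mapsto 0$ (from algebraic independence of $y_1$) and its uniqueness (from generation).

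Two steps deserve one more line each when you write it up.

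\textbf{Independence.} The fact you need is the following. For $w\in\QY^0$, every word in $y_1^{\ast s}\ast w$ has at most $s$ leading letters $y_1$, and the only word with exactly $s$ of them is $y_1^s w$, with coefficient $s!$. This holds because a leading $y_1$ can only be a letter $y_1$ of a word of $y_1^{\ast s}$ that is not merged with a letter of $w$. Indeed, merged letters have index at least $2$, and any letter $y_1$ of $w$ is preceded by the first letter of $w$, which contributes a letter of index at least $2$. Hence the component of $\sum_{s\le r}y_1^{\ast s}\ast w_s$ with exactly $r$ leading $y_1$'s is $r!\,y_1^r w_r$. If the sum vanishes, this forces $w_r=0$, and you conclude by descending induction.

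\textbf{Generation.} The bookkeeping you worry about is lighter than you expect. Induct on $r$ with the statement that every word with at most $r$ leading $y_1$'s lies in $\sum_{s\le r}y_1^{\ast s}\ast\QY^0$. All lower-order terms of $y_1\ast y_1^{r-1}w$ are then covered by the induction hypothesis whatever their length, so no secondary filtration by length is needed.

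Finally, the claim that $\QY^0\otimes\Q[T]\to\QY$, $w\otimes T^s\mapsto y_1^{\ast s}\ast w$, is an algebra map uses that $\ast$ is commutative and associative, which you may quote as known.
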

Note that the stuffle product is homogeneous for the weight and only filtered for the depth.
One of the central open problems in the field is the following well-known conjecture proposed by Ihara, Kaneko, and Zagier. 

\begin{conj} \label{con:IKZ}
All algebraic relations in the algebra $\MZV$ are obtained from the comparison of the extended maps $(\QX,\shuffle)\to \MZV$ and $(\QY,\ast)\to \MZV$ via the regularization map from \cite[Theorem 1]{IKZ}. 
\end{conj}

In particular, by Conjecture \ref{con:IKZ} the algebra $\MZV$ would be graded by weight. Moreover, the algebra $\MZV$ is equipped with an increasing depth filtration
\begin{align*}
\operatorname{Fil}_D^d\MZV := 
\operatorname{span}_\Q(\zeta(k_1,\ldots,k_r)\mid r\leq d),
\end{align*}
so
\[
\Q=\operatorname{Fil}_D^0\MZV\subset \operatorname{Fil}_D^1\MZV\subset \operatorname{Fil}_D^2\MZV\subset\cdots.
\]
We are interested in the associated depth-graded algebra
\begin{align*}
\grMZV := \Q\oplus\bigoplus_{d\geq1} \operatorname{Fil}_D^d\MZV / \operatorname{Fil}_D^{d-1}\MZV.
\end{align*}
Denote by $\zeta_D(k_1,\ldots,k_d)$ the equivalence class of $\zeta(k_1,\ldots,k_d)$ in $\grMZV$.

The shuffle product is weight homogeneous, hence by Proposition \ref{prop:shuffle} we get a surjective algebra morphism
\begin{align*}
(\QX,\shuffle)\to \grMZV,\quad x_0^{k_1-1}x_1\cdots x_0^{k_d-1}x_1\mapsto\zeta_D(k_1,\ldots,k_d).
\end{align*}
The stuffle product is only filtered for the depth, and the associated depth-graded product is the shuffle product on $\QY$, which we denote by $\shuffle_Y$. The map given in Proposition \ref{prop:stuffle} induces a surjective algebra morphism
\begin{align*}
(\QY,\shuffle_Y)\to \grMZV,\quad y_{k_1}\cdots y_{k_d}\mapsto\zeta_D(k_1,\ldots,k_d).
\end{align*}
Similar to Conjecture \ref{con:IKZ}, we expect the following.
\begin{conj}
All algebraic relations in the algebra $\grMZV$ are obtained by the comparison of the maps $(\QX,\shuffle)\to\grMZV$ and $(\QY,\shuffle_Y)\to\grMZV$.
\end{conj}
This conjecture motivates the definition of the \emph{linearized double shuffle Lie algebra} $\ls$ first given in \cite{Br21}, see Definition \ref{def:ls} and Theorem \ref{thm:ls_Lie}. 
\begin{conj} \label{conj:gr_DZ_and_ls}
There is an algebra isomorphism
\[
\grMZV/(\zeta_D(2))\simeq \mathcal{U}(\ls)^\vee.
\]
\end{conj}
In particular, the Lie algebra $\ls$ is expected to be the graded dual of the indecomposables of $\grMZV/(\zeta_D(2))$. 
Note that the Lie algebra $\ls$ is also studied from the perspective of its bigraded dual Lie coalgebra.
In fact, Maassarani proved in \cite{Mas} that this Lie coalgebra is isomorphic to the dihedral Lie coalgebra, 
studied by Goncharov in \cite{Gon,Gon2} in the context of multiple polylogarithms and mixed Tate motives.


In Section \ref{sec:ls}, we prove that $\ls$ is essentially the stabilizer of the bialgebra coproduct $\Delta_Y$, dual to the shuffle product $\shuffle_Y$ on $\QY$ for the action of the Lie algebra $\Lie(X)$ of primitive elements for the bialgebra coproduct $\Delta_X$, dual to the shuffle product $\shuffle$ on $\QX$. This approach is highly inspired from the work in \cite{EF18}. More precisely, we have the following main result:
\begin{thm}[Theorem \ref{thm:stab_equals_ls}] We have an isomorphism of bigraded spaces
\[
\stab_{\Lie(X)}(\Delta_Y)=\ls\oplus\Q x_0\oplus \Q x_1.
\]
\end{thm}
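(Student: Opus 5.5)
The plan follows the strategy of Enriquez--Furusho \cite{EF18}: convert the stabilizer condition, which is a statement about operators, into a statement about a single element by evaluating at the unit, and then compare with the defining conditions of $\ls$ (Definition \ref{def:ls}). Since I do not yet have the precise action written down, I assume the action of $\Lie(X)$ on $\QY$ (through which $\stab_{\Lie(X)}(\Delta_Y)$ is defined) is by a derivation-type operator $a_\psi$. I expect it to be built from the map $\pi_Y\colon \QX\to\QY$, sending words ending in $x_1$ via $x_0^{k-1}x_1\mapsto y_k$, together with a correction term, as in the Enriquez--Furusho construction. The stabilizer condition is then
\[
\Delta_Y\circ a_\psi=(a_\psi\otimes\id+\id\otimes a_\psi)\circ\Delta_Y .
\]
Everything is bigraded by weight and depth, and the action respects this bigrading. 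So it suffices to prove the identity one bidegree at a time, comparing homogeneous pieces.

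\textbf{Step 1 (necessity).} Take $\psi\in\stab_{\Lie(X)}(\Delta_Y)$ and apply the stabilizer identity to $1\in\QY$. Since $\Delta_Y(1)=1\otimes 1$, this gives $\Delta_Y(a_\psi(1))=a_\psi(1)\otimes 1+1\otimes a_\psi(1)$. Thus $a_\psi(1)$, which I expect to be $\pi_Y(\psi)$ up to a correction in the letter $y_1$, is primitive for $\Delta_Y$. Together with $\psi\in\Lie(X)$, which gives primitivity for $\Delta_X$, this is exactly the linearized double shuffle condition. In depth $1$ the condition is vacuous, and I expect the defining conditions of $\ls$ to differ from the stabilizer conditions only there. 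There the stabilizer contains extra elements, which should be precisely $x_0$ and $x_1$. Checking this amounts to computing $a_{x_0}$ and $a_{x_1}$ explicitly and verifying that they are coderivations of $\Delta_Y$; both are weight-$1$ elements, so these are short direct checks.

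\textbf{Step 2 (sufficiency).} Let $\psi\in\ls$. I want to show that $a_\psi$ is a coderivation of $\Delta_Y$ on all of $\QY$, not only at $1$. The plan is to write $a_\psi$ as a sum $a_\psi(w)=\pi_Y(\psi)\,w+d_\psi(w)$, where $d_\psi$ is a derivation of the concatenation product determined by its values on the letters $y_k$. Since $\Delta_Y$ is a concatenation morphism, it then suffices to check two things. First, the term $\pi_Y(\psi)$ must be primitive, which is the linearized stuffle condition on $\psi$. Second, the $d_\psi(y_k)$ must be compatible with $\Delta_Y(y_k)=y_k\otimes1+1\otimes y_k$, that is, each $d_\psi(y_k)$ must itself be primitive. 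This second point is the main obstacle. In \cite{EF18} it is handled through a generating-series identity: $d_\psi(y_k)$ is expressed via $\pi_Y$ applied to $\psi$ with $x_0$-shifts inserted (essentially $\pi_Y(x_0^{k-1}\psi\,x_1)$-type terms, or via the polynomial description $f(t_1,\dots,t_d)$ with the $t_i$ translated). My first attempt would be to pass to the commutative polynomial picture of $\ls$. There primitivity for $\Delta_Y$ becomes vanishing of shuffle sums of the polynomials, and translation invariance in the variables, which is a known property of elements of $\ls$ coming from $\psi\in\Lie(X)$, should show that the shifted polynomials still satisfy the same vanishing.

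\textbf{Step 3 (assembling).} The sum $\ls\oplus\Q x_0\oplus\Q x_1$ is direct, since $x_0$ and $x_1$ lie in weight $1$ and $\ls$ starts in higher weight. Combining Steps 1 and 2 in each bidegree then gives the claimed equality of bigraded spaces.
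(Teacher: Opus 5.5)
There is a genuine gap. Your plan never uses condition (iv) of Definition \ref{def:ls}, the vanishing of $(\psi\mid x_0^{n-1}x_1)$ for even $n$, and this parity condition is where the actual content of the theorem sits.

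\textbf{Necessity direction.} Evaluating the coderivation identity at $1$ (where $s^Y_\psi(1)=\pi_Y(\psi)$ exactly, with no correction term) only gives $\stab_{\Lie(X)}(\Delta_Y)\subset \Lie(X)\cap\pi_Y^{-1}(\Lie(Y))$. In depth one this is no restriction at all. We have $\Lie(X)[m,1]=\Q\,\ad_{x_0}^{m-1}(x_1)$, and $\pi_Y(\ad_{x_0}^{m-1}(x_1))=y_m$ is primitive. Yet $\ls[m,1]=0$ for even $m\geq 2$.

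Checking that $s^Y_{x_0}$ and $s^Y_{x_1}$ are coderivations only proves $\Q x_0\oplus\Q x_1\subset\stab_{\Lie(X)}(\Delta_Y)$. It does not prove that the extra elements are ``precisely $x_0$ and $x_1$''. The missing step is to show that $s^Y_{\ad_{x_0}^{m-1}(x_1)}$ fails to be a coderivation for even $m$. This needs a test on a nontrivial word. The paper uses $y_2$, where the defect is
\[
-\sum_{k=0}^{m}\big((-1)^{m-k}+(-1)^k\big)\binom{m}{k}\,y_{k+1}\otimes y_{m-k+1}
\]
(Lemma \ref{lem:nonvanishing_sum}), which is nonzero exactly when $m$ is even.

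\textbf{Sufficiency direction.} The same omission makes your proposed argument fail as stated. Your reduction is fine. Multiplication by the primitive element $\pi_Y(\psi)$ is a coderivation, and left versus right multiplication differ by the coderivation $\ad_{\pi_Y(\psi)}$; the paper writes $s^Y_\psi=D^Y_\psi+r_{\pi_Y(\psi)}$. So it suffices that the derivation part sends each $y_n$ to a primitive element.

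However, this does not follow from $\Delta_X$-primitivity of $\psi$ and $\Delta_Y$-primitivity of $\pi_Y(\psi)$. Nor does it follow from consequences of these, such as the translation invariance you invoke. The element $\ad_{x_0}^{m-1}(x_1)$ with $m$ even satisfies all of these properties and still is not in the stabilizer.

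What actually holds (Lemmas \ref{lem:DYf_yn} and \ref{lem:coder_difference}) is the following. Write $\psi=\sum_i f_i x_0^i$ with $f_i\in\QY$. Then $D^Y_\psi(y_n)=\sum_i (f_i y_{i+n}+y_{i+n}\overline f_i)$. Its failure to be primitive is $\sum_i (f_{i,0}\otimes y_{i+n}+y_{i+n}\otimes f_{i,0})$, where
\[
f_{i,0}=f_i+\overline f_i=\big(1+(-1)^p\big)(-1)^{p-i}\binom{p-1}{i}(\pi_Y(\psi)\mid y_p)\,y_{p-i}.
\]
This vanishes automatically in odd weight $p$, and in even weight precisely because of condition (iv). So you need to compute this defect explicitly, using $\psi=\sec(\pi_Y(\psi))$ and $D^Y_\psi(y_1)=0$, and match it to the depth-one coefficient. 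Expecting it to vanish from the double shuffle conditions alone cannot work.
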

The explicit definition of the stabilizer $\stab_{\Lie(X)}(\Delta_Y)$ is given in Definition \ref{def:stab_Delta_Y}. We want to highlight the fact that the construction of the stabilizer induces a natural Lie algebra structure on $\stab_{\Lie(X)}(\Delta_Y)$. In particular, Theorem \ref{thm:stab_equals_ls} implies that the space $\ls$ is a Lie algebra, cf Theorem \ref{thm:ls_Lie}. This provides an alternative proof of a result originally stated in \cite[Theorem~5.5]{Br21}.

\vspace{0.5cm}
\textbf{Multiple q-zeta values.} For any parameter $0<q<1$, let $\qMZV$ be the $\Q$-algebra of multiple $q$-zeta values, which may be defined in several ways. A straightforward one is
\begin{align*}
\qMZV=\operatorname{span}_\Q\left\{\sum_{0<n_1<\cdots<n_r}\frac{q^{n_1s_1}}{(1-q^{n_1})^{s_1}}\cdots \frac{q^{n_rs_r}}{(1-q^{n_r})^{s_r}}\ \middle|\ s_1,\ldots,s_r\in\mathbb{Z}_{\geq0},\ s_r\geq1\right\}.
\end{align*}
For further details on the equivalent definitions and more background of the space $\qMZV$, we refer to \cite{BK20}, \cite{Bu23}. In the following, we focus on a specific spanning set of $\qMZV$ called the
\emph{balanced multiple q-zeta values} $\zeta_q(s_1,\ldots,s_r)$, where $s_1,\ldots,s_r\in\mathbb{Z}_{\geq0}$, $s_r\geq1$. The explicit construction is given in \cite{Bu24}, here we just explain their main properties. For $\zeta_q(s_1,\ldots,s_r)$, the \emph{weight} is $s_1+\cdots+s_r+\#\{s_i=0\}$ and the \emph{depth} is $r-\#\{s_i=0\}$.

Let $\QB$ be the non-commutative free algebra generated by the alphabet $B=\{b_0,b_1,b_2,\ldots\}$. The \emph{balanced quasi-shuffle product} on $\QB$ is defined by $1\ast_b w= w\ast_b 1=w$ and
\begin{align*}
b_iv\ast_b b_jw=b_i(v\ast_b b_jw)+b_j(b_iv\ast_b w)+\delta_{ij>0} b_{i+j}(v\ast_b w), \quad i,j\geq0,\ v,w\in\QB.
\end{align*}
Consider the subspace $\QB^0$ spanned by all words which do not end in $b_0$. The subspace $\QB^0$ is a subalgebra of $\QB$ freely generated by $(b_0^{m}b_k)_{m\geq0, k\geq1}$.

We define the algebra antiautomorphism $\tau:\QB^0\to \QB^0$ by
\begin{align*} 
\tau(b_0^mb_k)=b_0^{k-1}b_{m+1}
\end{align*}
for all $m\geq0$, $k\geq1$. The map $\tau$ is an involution.
\begin{prop} \label{prop:balanced_qsh_tau_inv}
There is a surjective, $\tau$-invariant algebra morphism
\[
(\QB^0,\ast_b)\to \qMZV,\quad b_{s_1}\cdots b_{s_r}\mapsto \zeta_q(s_1,\ldots,s_r),
\]
which has a unique extension to $\QB$ such that $b_0\mapsto0$.
\end{prop}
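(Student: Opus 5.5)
We prove this through the explicit construction of the balanced multiple $q$-zeta values in \cite{Bu24}, in three steps: surjectivity, the algebra morphism property, and $\tau$-invariance.

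\textbf{Surjectivity and the extension.} In \cite{Bu24}, $\zeta_q(s_1,\ldots,s_r)$ is defined via a generating series of ``balanced'' sums (bi-brackets in the sense of Bachmann). These are related to the spanning set of $\qMZV$ displayed above by a unitriangular change of basis with respect to weight and depth. This gives surjectivity of the map on $\QB^0$. For the extension, we use that $\QB^0$ is a subalgebra and that $(\QB,\ast_b)\cong \QB^0[b_0]$ as a polynomial algebra in $b_0$. This is the analogue of the regularization theorem of \cite{IKZ}, proved by the usual induction on the number of trailing $b_0$'s, since $w\ast_b b_0 = w b_0 + (\text{terms with fewer trailing } b_0)$. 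Setting $b_0\mapsto 0$ therefore defines the extension uniquely.

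\textbf{Algebra morphism.} The product on balanced sums is computed by merging the summation indices $0<n_1<\cdots<n_r$ as for ordinary quasi-shuffles. When two indices coincide, the factors $\frac{q^{ns_1}}{(1-q^n)^{s_1}}$ and $\frac{q^{ns_2}}{(1-q^n)^{s_2}}$ combine. The ``balanced'' normalization in \cite{Bu24} is chosen precisely so that the combined factor equals the factor for $s_1+s_2$ when $s_1,s_2\ge1$, and vanishes in the merged-$b_0$ case. This accounts for the term $\delta_{ij>0}b_{i+j}$. Concretely, we would verify the recursion on the generating series
\[
\sum \zeta_q\binom{X_1,\ldots,X_r}{Y_1,\ldots,Y_r}X_1^{\cdot}\cdots Y_r^{\cdot}
\]
by the standard quasi-shuffle argument on the ordered index set.

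\textbf{$\tau$-invariance.} This is the main obstacle. The plan is to express the generating series of balanced $q$-MZVs in terms of bi-brackets. The involution $\tau$, which exchanges $b_0^m b_k \leftrightarrow b_0^{k-1}b_{m+1}$, corresponds on generating series to the partition-conjugation symmetry
\[
\binom{X_1,\ldots,X_r}{Y_1,\ldots,Y_r}\mapsto\binom{Y_1+\cdots+Y_r,\ldots,Y_1+Y_2,\,Y_1}{X_r,\,X_{r-1}-X_r,\ldots,X_1-X_2}.
\]
This symmetry is proved by writing the bi-bracket as a sum over partitions and transposing the Young diagram. We would check that $\tau$, being an antiautomorphism on the free generators $b_0^mb_k$, matches this substitution block by block. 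The delicate point is the bookkeeping of the shifts $k-1$ and $m+1$ against the exponents in the generating series.
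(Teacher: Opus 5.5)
The paper gives no proof of this proposition. It is quoted in the introduction as background, and the definition of $\zeta_q$ and the proofs of all three properties are deferred to \cite{Bu24}. Judged on its own terms, your sketch has a genuine gap: you argue the product formula and the $\tau$-invariance for two different families of $q$-series, and neither family has both properties.

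\textbf{The product step.} You use nested sums $\sum_{0<n_1<\cdots<n_r}\prod_j f_{s_j}(n_j)$ with index-wise merging, and claim that the merged factor vanishes whenever a $0$ is involved. This cannot happen. It would require $f_0(n)^2=0$ for all $n$, hence $f_0=0$, since there are no zero divisors. Then every value with an argument $0$ vanishes, and $\tau$-invariance forces $\zeta_q(k)=\zeta_q(\{0\}^{k-1},1)=0$ for all $k\geq2$.

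With the factors actually displayed, $f_s(n)=q^{ns}/(1-q^n)^s$, you have $f_0=1$. Your argument then produces the Schlesinger--Zudilin quasi-shuffle, in which $b_0\diamond b_j=b_j$ and $b_0\diamond b_0=b_0$. This is not weight-homogeneous, so these values do not satisfy $\ast_b$. They are nevertheless $\tau$-invariant by partition conjugation, e.g.\ $\sum_n q^{nk}/(1-q^n)^k=\sum_n\binom{n-1}{k-1}q^n/(1-q^n)$.

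\textbf{The $\tau$ step.} The bi-brackets you invoke do satisfy the partition (swap) relation. However, their product carries lower-weight terms: already $[1]^2=2[1,1]+[2]-[1]$, whereas $b_1\ast_b b_1=2b_1b_1+b_2$. So they do not give an algebra morphism from $(\QB^0,\ast_b)$ either. Your ``transpose the Young diagram'' argument therefore proves $\tau$-invariance for a family that fails the balanced product.

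\textbf{What is missing.} The missing idea is exactly what \cite{Bu24} supplies. One needs a single explicitly constructed family, which cannot come from a pointwise renormalization of summands as above. For that family one must prove the balanced quasi-shuffle product and the $\tau$-invariance simultaneously, and then show that it still spans $\qMZV$.

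Your surjectivity step, an unspecified ``unitriangular change of basis'', depends on that construction, so it is also unsupported. Only the extension step is sound as written. It rests on $(\QB,\ast_b)\cong\QB^0[b_0]$, which holds because $b_0$ never merges, so $w\ast_b b_0$ is just a shuffle.
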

Note that the balanced quasi-shuffle product is homogeneous for the weight and only filtered for the depth. The map $\tau$ is homogeneous for the weight and depth.

As an analogue of Conjecture \ref{con:IKZ}, we have the following.
\begin{conj} \label{con:rel_in_qMZV}
All algebraic relations in $\qMZV$ are induced by the balanced quasi-shuffle product and the $\tau$-invariance of balanced multiple q-zeta values.
\end{conj}

In particular, Conjecture \ref{con:rel_in_qMZV} would imply that the algebra $\qMZV$ is graded by weight. Moreover, the algebra $\qMZV$ has an increasing depth filtration
\begin{align*}
\operatorname{Fil}_D^d\qMZV := \operatorname{span}_\Q(\zeta_q(s_1,\ldots,s_r)\mid r-\#\{s_i=0\}\leq d),
\end{align*}
and the associated depth-graded algebra is given by
\begin{align*}
\grqMZV := \Q\oplus\bigoplus_{d\geq1} \operatorname{Fil}_D^d\qMZV/\operatorname{Fil}_D^{d-1}\qMZV.
\end{align*}
We denote by $\zeta_{q,D}(s_1,\ldots,s_r)$ the equivalence class of $\zeta_q(s_1,\ldots,s_r)$ in $\grqMZV$.

The map $\tau$ is weight and depth homogeneous, hence by Proposition \ref{prop:balanced_qsh_tau_inv} we have $\tau$-invariance in $\grqMZV$, which is given by the following explicit identity
\begin{align*}
\zeta_{q,D}(\{0\}^{m_1},k_1,\ldots,\{0\}^{m_d},k_d)=\zeta_{q,D}(\{0\}^{k_d-1},m_d+1,\ldots,\{0\}^{k_1-1},m_1+1),
\end{align*}
for $k_j\geq1,\ m_j\geq0$. The balanced quasi-shuffle product is filtered for the depth, and the associated depth-graded product is the shuffle product on $\QB$, which we denote by $\shuffle_B$. So the map in Proposition \ref{prop:balanced_qsh_tau_inv} induces a surjective algebra morphism
\begin{align*}
(\QB,\shuffle_B)\to\grqMZV,\quad b_{s_1}\cdots b_{s_r}\mapsto \zeta_{q,D}(s_1,\ldots,s_r).
\end{align*}
These two properties motivate the definition of the \emph{linearized balanced Lie algebra} $\lqq$ introduced in \cite{Bu25}, see Definition \ref{def:lq} and Theorem \ref{thm:lq_Lie}. By construction, there is a surjective algebra morphism
\begin{align*} 
\mathcal{U}(\lqq)^\vee\twoheadrightarrow \grqMZV/\big(\zeta_{q,D}(2),\zeta_{q,D}(4),\zeta_{q,D}(6)\big).
\end{align*}
Note that in contrast to Conjecture \ref{conj:gr_DZ_and_ls} for multiple zeta values, we know that this map is not injective.

In Section \ref{sec:lq}, we prove that $\lqq$ is essentially the stabilizer of the involution $\tau$, for the action of the Lie algebra $\Lie(B)$ of primitive elements for the bialgebra coproduct $\Delta_B$, dual to the shuffle product $\shuffle_B$. The main result is as follows.
\begin{thm}[Theorem \ref{thm:stab_equal_lq}] We have an isomorphism of bigraded spaces
\[
\stab_{\Lie(B)}(\tau)=\lqq\oplus \Q b_0.
\]    
\end{thm}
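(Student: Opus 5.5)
The plan is to follow the strategy used in Section \ref{sec:ls} for Theorem \ref{thm:stab_equals_ls}, with the coproduct $\Delta_Y$ replaced by the involution $\tau$. First I recall the action of $\Lie(B)$ on $\QB$ underlying Definition of $\stab_{\Lie(B)}(\tau)$. Concretely, an element $f\in\Lie(B)$ acts by a derivation-type operator $d_f$ on $\QB$, determined by its values on the letters. I then check that $d_f$ preserves the subalgebra $\QB^0$. This requires a short argument on the last letter: words not ending in $b_0$ are sent to combinations of words not ending in $b_0$, up to the explicitly controlled contribution of $f$ itself. The stabilizer condition is then that $\tau\circ d_f=d_{f}\circ\tau$ on $\QB^0$ (or a twisted version $\tau\circ d_f=d_{\tau(f)}\circ\tau$, whichever the definition uses). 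Since $\tau$, $d_f$ and the Lie bracket are all homogeneous for weight and depth, the stabilizer is a bigraded subspace. It therefore suffices to fix the weight and the depth and to work with homogeneous $f$.

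The key reduction is that both sides of the stabilizer equation are compatible with concatenation. Here $\tau$ is an antiautomorphism and $d_f$ is a derivation, so $\tau d_f\tau^{-1}$ is again a derivation. Hence the identity $\tau\circ d_f=d_f\circ\tau$ holds on all of $\QB^0$ as soon as it holds on the free generators $b_0^m b_k$ ($m\geq0$, $k\geq1$). Evaluating on these generators gives a family of linear conditions on the coefficients of $f$. I then rewrite this family as a single condition on $f$, after decomposing $f=f'+c\,b_0$ with $f'$ of depth $\geq1$ (or, more precisely, after splitting off the part of $f$ that does not lie in $\QB^0$). I expect the resulting condition to be exactly the linearized $\tau$-invariance condition in Definition \ref{def:lq}, which states that $f'$ (suitably corrected by its terms ending in $b_0$) is $\tau$-invariant up to the sign and reversal conventions built into that definition. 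For the generator equation, the plan is to compare coefficients of words $b_0^{m_1}b_{k_1}\cdots b_0^{m_d}b_{k_d}$ on both sides. I would use the explicit formula $\tau(b_0^{m_1}b_{k_1}\cdots b_0^{m_d}b_{k_d})=b_0^{k_d-1}b_{m_d+1}\cdots b_0^{k_1-1}b_{m_1+1}$, which is the identity displayed before Conjecture \ref{con:rel_in_qMZV}.

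Both inclusions then follow. For $\supseteq$, the element $b_0$ has weight $1$ and depth $0$, and I check directly that $d_{b_0}$ commutes with $\tau$, or acts trivially in the relevant sense. For $f\in\lqq$, the defining $\tau$-condition gives the generator identities, and therefore the full stabilizer identity by the derivation argument. For $\subseteq$, I take $f$ in the stabilizer. The depth-$0$ component of $f$ lies in $\Lie(B)$ in depth $0$, which is spanned by $b_0$, so it is a multiple of $b_0$. Subtracting this multiple, the generator identities force the remaining part to satisfy the conditions of Definition \ref{def:lq}. Since $\lqq$ consists of elements of depth $\geq1$, the sum is direct.

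The main obstacle will be the coefficient comparison on the generators $b_0^mb_k$. The action $d_f$ mixes the part of $f$ in $\QB^0$ with its terms ending in $b_0$, and these terms do not transform simply under $\tau$. I would first try to handle them by showing that, for $f\in\Lie(B)$, the terms ending in $b_0$ are determined by the other terms via the shuffle-primitivity. The tool for this is the standard identity expressing a Lie element through its words with a prescribed last letter, analogous to how $x_0$-ending terms were handled in Section \ref{sec:ls}. The generator condition then becomes a condition on the $\QB^0$-part alone, which should match Definition \ref{def:lq}.
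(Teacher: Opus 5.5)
Your central reduction does not go through. The operator by which $\psi\in\Lie(B)$ acts is not a derivation of $\QB^0$. It is the map $\sigma^0_\psi$ induced on $\QB^0\simeq\QB/\QB b_0$ by $\sigma_\psi=\ell_\psi+\partial_\psi$, which preserves the left ideal $\QB b_0$, not $\QB^0$.

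\begin{itemize}
\item In particular $\sigma^0_\psi(1)=\pi_0(\psi)\neq0$.
\item For $u,v\in\QB^0$ with $v$ without constant term, one only has $\sigma^0_\psi(uv)=D_\psi(u)\,v+u\,\sigma^0_\psi(v)$, where $D_\psi=\partial_\psi+\ad_\psi$.
\item The Section~1 device $s^Y_f=D^Y_f+r_{\pi_Y(f)}$ has no analogue here. The map $\sigma^0_\psi-r_{\pi_0(\psi)}=\pi_0\circ D_\psi$ is a derivation of $\QB^0$ only if $D_\psi$ preserves $\QB^0$. Already for $\psi=[b_0,b_2]\in\lqq$ one finds $D_\psi(b_k)=-[b_{k+1},[b_0,b_1]]$, which contains the word $b_{k+1}b_1b_0$. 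For instance, $\pi_0D_\psi(b_1b_1)-\pi_0D_\psi(b_1)\,b_1-b_1\,\pi_0D_\psi(b_1)=b_2b_1b_0b_1$.
\item Since $\tau$ also reverses products, checking $\sigma^0_\psi\circ\tau=\tau\circ\sigma^0_\psi$ on the generators $b_0^mb_k$ tells you nothing about all of $\QB^0$.
\item The value at $1$, which a derivation argument would discard, is exactly where the condition $\tau(\pi_0(\psi))=\pi_0(\psi)$ comes from (Lemma~\ref{lem:stab_subset_tau_inv}).
\end{itemize}

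So your inclusion $\lqq\subset\stab_{\Lie(B)}(\tau)$ is unproved. The paper instead verifies $\tau\circ\sigma^0_\psi\circ\tau\circ\pi_0=\pi_0\circ\sigma_\psi$ on every word, splitting $\partial_\psi=\partial^R_\psi-\partial^L_\psi$. It uses three inputs:
\begin{itemize}
\item $\sec\circ\pi_0=\id$ on $\kernel$, which is the correct form of your remark that the $b_0$-ending terms are determined by the rest;
\item the conjugation formulas of Proposition~\ref{prop:tau_partial};
\item the $\rho$-invariance of $\pi_0(\psi)$ for $\psi\in\lqq$ (Proposition~\ref{prop:lq_rho_inv}).
\end{itemize}
The extra terms $\sec(w)v-\tau\big(\sec(\tau(w))\tau(v)\big)$ in Proposition~\ref{prop:tau_partial}(a) are cancelled precisely by the $\ell_\psi$-part. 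So the non-derivation part of the action is essential, not a nuisance.

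Your converse direction also rests on a false expectation: the stabilizer condition does not reduce to $\tau$-invariance of the $\QB^0$-part. In depth $1$ and even weight, the stabilizer is strictly smaller than $\{\psi\in\Lie(B)\mid\tau(\pi_0(\psi))=\pi_0(\psi)\}$. Take $\psi=[b_0,b_1]+b_2$:
\begin{itemize}
\item it satisfies $\tau(\pi_0(\psi))=\pi_0(\psi)$;
\item it violates condition (iv);
\item indeed $\sigma^0_\psi(\tau(b_1))=b_1b_0b_1-b_2b_1+2b_1b_2$, whereas $\tau(\sigma^0_\psi(b_1))=b_2b_1-b_1b_0b_1+2b_0b_1b_1$.
\end{itemize}

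So the argument must be organised by bidegree, as in the paper.
\begin{itemize}
\item Evaluation at $1$ gives $\stab_{\Lie(B)}(\tau)\subset\{\tau(\pi_0(\psi))=\pi_0(\psi)\}$.
\item This set agrees with $\lqq$ in depth $\geq2$ and in odd weight of depth $1$, where (i) and (iv) are vacuous. There the two inclusions give equality.
\item In even weight $m$ of depth $1$, write $\psi=\sum_{k=1}^m\lambda_k\ad_{b_0}^{m-k}(b_k)$. You need a separate computation of both sides on $b_1$, comparing coefficients of $b_0^{t_1}b_sb_0^{t_2}b_1$ with $t_1\geq1$. This forces $\lambda_2=\dots=\lambda_m=0$, and then $\tau$-invariance forces $\lambda_1=0$.
\end{itemize}
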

For the explicit definition of the stabilizer $\stab_{\Lie(B)}(\tau)$ we refer to Definition \ref{def:stab_tau}. As before, the stabilizer $\stab_{\Lie(B)}(\tau)$ is naturally equipped with a Lie algebra structure. Therefore, by Theorem \ref{thm:stab_equal_lq} we obtain an alternative proof for $\lqq$ being a Lie algebra, cf Theorem \ref{thm:lq_Lie}. This was originally proven in \cite{Bu25}.

In Section \ref{sec:comparison} we relate the two stabilizers $\stab_{\Lie(X)}(\Delta_Y)$ and $\stab_{\Lie(B)}(\tau)$ studied in the previous sections. In fact, it has been proven in \cite[Theorem 7.10]{Bu25} that there exists an injective Lie algebra morphism $\theta : \ls \hookrightarrow \lqq$. In addition, the inclusions $\ls \subset \stab_{\Lie(X)}(\Delta_Y)$ and $\lqq \subset \stab_{\Lie(B)}(\tau)$ arising respectively from Theorems \ref{thm:stab_equals_ls} and \ref{thm:stab_equal_lq}, are Lie algebra inclusions. The following result shows that the map $\theta$ extends to the stabilizers.
\begin{thm}[Theorem \ref{thm:theta:stabDeltaY_to_stabtau}] The injective Lie algebra morphism $\theta : \ls \hookrightarrow \lqq$ induces an injective Lie algebra morphism
\begin{align*}
\theta^{(10)} : \big(\stab_{\Lie(X)}(\Delta_Y), \{-, -\}\big) \longrightarrow \big(\stab_{\Lie(B)}(\tau), \{-, -\}_A\big).
\end{align*}
\end{thm}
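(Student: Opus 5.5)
The plan is to build $\theta^{(10)}$ from the decompositions $\stab_{\Lie(X)}(\Delta_Y)=\ls\oplus\Q x_0\oplus\Q x_1$ (Theorem \ref{thm:stab_equals_ls}) and $\stab_{\Lie(B)}(\tau)=\lqq\oplus\Q b_0$ (Theorem \ref{thm:stab_equal_lq}). On the summand $\ls$ we take $\theta^{(10)}=\theta$, the injective Lie algebra morphism of \cite[Theorem 7.10]{Bu25}. The superscript $(10)$ suggests that $x_1$ and $x_0$ go to explicit low-weight elements of $\stab_{\Lie(B)}(\tau)$. The natural guess is $x_0\mapsto b_0$ and $x_1\mapsto b_1$ (possibly up to sign), or $x_1\mapsto0$ if $b_1$ does not lie in the stabilizer.

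We first check that $b_1$ is $\tau$-stable. Since $\tau(b_1)=b_0^{0}b_{1}=b_1$, and $b_1$ is primitive of depth one, I expect $b_1\in\lqq$ in weight one and depth one. This is probably also where the bigraded isomorphism places the image of $x_1$.

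Injectivity should then follow from bidegree considerations. The map $\theta$ is injective on $\ls$. The elements $b_0$ and $b_1$ sit in bidegrees (weight, depth) $(1,0)$ and $(1,1)$, and nothing in $\theta(\ls)$ lives there, because $\ls$ is concentrated in weight $\ge 3$, or at least away from those bidegrees. Hence the images of the three summands are linearly independent.

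The main work is proving that $\theta^{(10)}$ intertwines the brackets $\{-,-\}$ and $\{-,-\}_A$. The brackets on the stabilizers are Ihara-type brackets induced by the derivations (or Lie algebra actions) defining the stabilizers. I would organize the check by cases:
\begin{caselist}
\item Both arguments lie in $\ls$. This is \cite[Theorem 7.10]{Bu25}, once we confirm that the stabilizer bracket restricted to $\ls$ agrees with Brown's bracket used there. Theorem \ref{thm:ls_Lie} and Theorem \ref{thm:lq_Lie} were proven precisely by identifying these brackets.
\item One argument is $x_0$ or $x_1$ and the other is $f\in\ls$. Here $\{x_0,f\}$ and $\{x_1,f\}$ reduce to explicit derivation actions, for instance $\{x_0,f\}=[x_0,f]$ plus a derivation term. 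We must show that $\theta$ transports them to $\{b_0,\theta(f)\}_A$ and $\{b_1,\theta(f)\}_A$. Both sides are computable from the explicit formula for $\theta$, namely a substitution sending $x_0^{k-1}x_1$-type letters to $b$-letters, composed with a polynomial change of variables. I would verify the identity on generators using the polynomial (commutative variable) representation of depth-graded elements. In that representation the actions of $x_0$ and $b_0$ become multiplication or shift operators, and their compatibility is a direct check.
\item Both arguments lie in $\{x_0,x_1\}$. Here one compares $\{x_0,x_1\}$ with $\{b_0,b_1\}_A$ directly in low weight.
\end{caselist}
I expect Case 2 to be the main obstacle, especially the interaction of $x_1$ with $\ls$. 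That bracket raises depth, so the image must match a depth-raising part of $\{-,-\}_A$. I would first try to show that $\theta$ extends to a morphism of the ambient Lie algebras $\Lie(X)\to\Lie(B)$ that is compatible with the defining actions, meaning it intertwines $\Delta_Y$-stabilization with $\tau$-stabilization. Then the bracket compatibility on all of the stabilizer follows formally, since the stabilizer brackets are restrictions of brackets defined functorially by those actions.
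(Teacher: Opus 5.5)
\textbf{Verdict: there is a genuine gap, in the bracket compatibility.} Your choice of map ($\theta$ on $\ls$, $x_0\mapsto b_0$, $x_1\mapsto b_1$) and your bidegree argument for injectivity agree with the paper. The fact that $b_1\in\lqq$ is indeed immediate from Definition \ref{def:lq}.

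\textbf{What goes wrong.} You leave the bracket check as an unspecified computation and partly misdescribe it. By Lemma \ref{lem:Ihara_with_x0_x1}, $x_0$ and $x_1$ are central for the Ihara bracket, so $\{x_0,f\}=\{x_1,f\}=0$ for every $f$. Thus bracketing with $x_1$ does not raise depth, and the only thing to match on the other side is zero. Similarly, $b_0$ is central for $\{-,-\}_A$ (Lemma \ref{lem:Ari_with_b0}). Your Cases 2 and 3 therefore reduce to a single statement, which your proposal never isolates:
\[
\{\theta(\psi),b_1\}_A=0 \quad\text{for all } \psi\in\ls .
\]
This is not formal, because $b_1$ is not central in $(\Lie(B),\{-,-\}_A)$. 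For instance, the definition of $\partial$ gives $\{b_2,b_1\}_A=2[b_1,b_2]\neq0$. Your proposed check via a commutative polynomial model with shift operators does not address this vanishing. Your fallback, extending $\theta$ to a morphism $\Lie(X)\to\Lie(B)$ that intertwines the two actions, is not carried out, and it is unclear how it could be. The formula $\theta_X(\psi)+\theta_Y(\pi_Y(\psi))$ only gives a Lie element when $\pi_Y(\psi)\in\Lie(Y)$, and it is only known to respect brackets on $\ls$.

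\textbf{The missing idea} is the argument in Proposition \ref{prop:theta_ex}:
\begin{itemize}
\item Since $\{-,-\}_A$ restricts to the Ihara bracket on $\Lie(b_0,b_1)$ (Remark \ref{rem:bracket_on_b0_b1}), one has $\{\theta_X(\psi),b_1\}_A=0$.
\item Hence $\{\theta_Y(\pi_Y(\psi)),b_1\}_A=\{\theta(\psi),b_1\}_A$. The right side lies in $\lqq$, because $\lqq$ is a Lie algebra containing both $\theta(\psi)$ and $b_1$. As this element has no $b_0$, it is $\tau$-invariant.
\item Since $(\psi\mid x_1^n)=0$ for all $n\geq1$, every word of $\theta_Y(\pi_Y(\psi))$ lies in $\Q\langle b_1,b_2,\ldots\rangle$ and contains some $b_i$ with $i>1$.
\item The bracket $\{-,-\}_A$ is bihomogeneous and does not change the number of $b_0$'s, so the same holds for $\{\theta_Y(\pi_Y(\psi)),b_1\}_A$.
\item But $\tau$ sends every such word to a word containing $b_0$. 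A $\tau$-invariant element supported on such words must therefore vanish.
\end{itemize}
Once this vanishing is established, the centrality of $x_0$, $x_1$ and $b_0$ gives the morphism property of $\theta^{(10)}$ immediately.
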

This result can be summarized by the fact that we have the following commutative diagram of injective Lie algebra morphisms:
\[\begin{tikzcd}
    \ls \ar[rr, hook, "\theta"] \ar[d, hook] && \lqq \ar[d, hook] \\
    \stab_{\Lie(X)}(\Delta_Y) \ar[rr, hook, "\theta^{(10)}"]&& \stab_{\Lie(B)}(\tau).
\end{tikzcd}\]
\vspace{0.3cm}
\begin{acknowledgments}
This project was partially supported by the first author's JSPS KAKENHI Grant 24KF0150 and Deutsche Forschungsgemeinschaft (DFG,German Research Foundation) – Project-ID 491392403 – TRR 358; and the second author's JSPS KAKENHI Grant 23KF0230. The authors are grateful to Benjamin Enriquez and Hidekazu Furusho for their fruitful comments.
\end{acknowledgments}

\section{The linearized double shuffle Lie algebra} \label{sec:ls}

\subsection{Algebraic setup}
Consider the non-commutative free $\Q$-algebra $\QX$ generated by $X := \{x_0,x_1\}$. It is equipped with a bialgebra structure with coproduct $\Delta_X : \QX \to \QX \otimes \QX$, which is the algebra morphism given by
\begin{equation} \label{eq:shuffle_coprod_X}
\Delta_X(x_i) = x_i \otimes 1 + 1 \otimes x_i, \quad \text{ for } i \in \{0, 1\}.
\end{equation}
Next, consider the non-commutative free $\Q$-algebra $\QY$ generated by $Y := \{y_1, y_2, \ldots \}$. It is equipped with a bialgebra structure with coproduct $\Delta_Y : \QY \to \QY \otimes \QY$, which is the algebra morphism given by
\begin{equation} \label{eq:shuffle_coprod_Y}
\Delta_Y(y_n) = y_n \otimes 1 + 1 \otimes y_n, \quad \text{ for } n \in \mathbb{Z}_{> 0}.
\end{equation}

The $\Q$-algebra morphism 
\begin{equation*}
    i_Y : \QY \to \QX, \quad y_n \mapsto x_0^{n-1} x_1
\end{equation*}
is injective. Thanks to that, we will - by abuse of notation - often identify $\QY$ with its image $i_Y(\QY)$ in $\QX$.
The direct sum decomposition of $\Q$-linear spaces
\[
    \QX = \QY\oplus \QX x_0
\]
induces a surjective $\Q$-linear map
\begin{equation*}
    \pi_Y: \QX \to \QY, \quad x_0^{k_1-1}x_1\cdots x_0^{k_d-1}x_1x_0^{r} \mapsto
    \begin{cases}
        y_{k_1}\cdots y_{k_d}, &\quad r=0, \\
        0 & \quad \text{otherwise}.
    \end{cases}
\end{equation*}
Therefore, we have a $\Q$-linear isomorphism 
\begin{equation} \label{eq:QY_simeq_quotient}
    \QY \simeq \QX / \QX x_0.
\end{equation}

Let $\Lie(X)$ be the free $\Q$-Lie algebra generated by the alphabet $X$. The $\Q$-algebra $\QX$ is isomorphic to the universal enveloping algebra of $\Lie(X)$. Therefore, $\Lie(X)$ is identified with the Lie algebra of primitive elements in $\QX$ for the coproduct $\Delta_X$ from \eqref{eq:shuffle_coprod_X}. Namely,
\[
    \Lie(X) = \{ \psi \in \QX \mid \Delta_X(\psi) = \psi \otimes 1 + 1 \otimes \psi \}.
\]
The Lie algebra $\Lie(X)$ is graded by weight, that is, it is equipped with a grading for which $x_0$ and $x_1$ are of degree $1$ and we have the decomposition
\[
    \Lie(X) = \bigoplus_{m \geq 1} \Lie(X)[m].
\]
The Lie algebra $\Lie(X)$ is also bigraded by weight and depth, that is, it is equipped with a bigrading for which $x_0$ is of bidegree $(1,0)$ and $x_1$ is of bidegree $(1,1)$ and we have the decomposition
\begin{equation} \label{eq:bigradecomp_LibX}
    \Lie(X) = \bigoplus_{m \geq 1, \ n \leq m} \Lie(X)[m,n].
\end{equation}

For $\psi \in \Lie(X)$, let $d_\psi$ be the derivation of $\QX$ given by
\begin{equation*} 
    d_{\psi}(x_0) = 0, \quad \text{ and } \quad d_\psi(x_1) = [x_1, \psi].
\end{equation*}
The triple $(\Lie(X),[-,-],d)$ is a post-Lie algebra, so by Proposition-Definition \ref{propdef:post-Lie} we get another Lie algebra bracket on $\Lie(X)$ given by
\begin{equation} \label{eq:Ihara_bracket}
\{\psi_1, \psi_2\} = d_{\psi_1}(\psi_2) - d_{\psi_2}(\psi_1) + [\psi_1, \psi_2].
\end{equation}
The Lie bracket \eqref{eq:Ihara_bracket} is usually referred to as the \emph{Ihara bracket}. 

\begin{lemma} \label{lem:Ihara_with_x0_x1}
The elements $x_0,x_1$ are central in the Lie algebra $(\Lie(X),\{-,-\})$.
\end{lemma}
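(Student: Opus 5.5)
Since $\{-,-\}$ is bilinear and antisymmetric, it suffices to show $\{x_0,\psi\}=0$ and $\{x_1,\psi\}=0$ for every $\psi\in\Lie(X)$. In each case I will expand \eqref{eq:Ihara_bracket} and evaluate the two derivation terms separately, using only the definition $d_\psi(x_0)=0$, $d_\psi(x_1)=[x_1,\psi]$ and the fact that each $d_\psi$ is a derivation of $\QX$.

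\textbf{Case $x_0$.} By \eqref{eq:Ihara_bracket},
\[
\{x_0,\psi\}=d_{x_0}(\psi)-d_\psi(x_0)+[x_0,\psi].
\]
The middle term vanishes since $d_\psi(x_0)=0$. For the first term, $d_{x_0}$ is the derivation with $d_{x_0}(x_0)=0=[x_0,x_0]$ and $d_{x_0}(x_1)=[x_1,x_0]$. Hence $d_{x_0}$ agrees with the inner derivation $-\ad_{x_0}=[-,x_0]$ on both generators, and since both are derivations they coincide on all of $\QX$. Therefore $d_{x_0}(\psi)=[\psi,x_0]$, which cancels $[x_0,\psi]$, and so $\{x_0,\psi\}=0$.

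\textbf{Case $x_1$.} Again,
\[
\{x_1,\psi\}=d_{x_1}(\psi)-d_\psi(x_1)+[x_1,\psi].
\]
Here $d_\psi(x_1)=[x_1,\psi]$, so the last two terms cancel. It remains to check that $d_{x_1}=0$. This holds because $d_{x_1}(x_0)=0$ and $d_{x_1}(x_1)=[x_1,x_1]=0$, so $d_{x_1}$ is a derivation vanishing on both generators. Thus $\{x_1,\psi\}=0$.

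There is essentially no obstacle. The only point needing care is the identification $d_{x_0}=[-,x_0]$, which follows from the general principle that a derivation of the free algebra $\QX$ is determined by its values on the generators.
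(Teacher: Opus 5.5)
Your proof is correct and follows essentially the same route as the paper. The only cosmetic differences are that the paper computes $\{\psi,x_i\}$ rather than $\{x_i,\psi\}$ and phrases the $x_0$ case as the derivation $-d_{x_0}+[-,x_0]$ vanishing on the generators, which is exactly your identification $d_{x_0}=[-,x_0]$. You also make explicit the fact $d_{x_1}=0$, which the paper uses silently.
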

\begin{proof} For any $\psi \in \Lie(X)$, we compute
\begin{align*} 
\{\psi,x_1\}=d_\psi(x_1)-d_{x_1}(\psi)+[\psi,x_1]=[x_1,\psi]+[\psi,x_1]=0.
\end{align*}
Moreover, we have
\begin{align*}
\{\psi,x_0\}=d_\psi(x_0)-d_{x_0}(\psi)+[\psi,x_0]=-d_{x_0}(\psi)+[\psi,x_0].
\end{align*}
Hence, we want to show that
\[-d_{x_0}(\psi)+[\psi,x_0]=0.\]
As $-d_{x_0}+[-,x_0]$ is a derivation on $\QX$, it suffices to check this equality for $\psi\in X$. We have
\begin{align*}
-d_{x_0}(x_1)+[x_1,x_0]&=-[x_1,x_0]+[x_1,x_0]=0, \\
-d_{x_0}(x_0)+[x_0,x_0]&=0.
\qedhere\end{align*}
\end{proof}

\begin{definition} \label{def:ls}
    The space $\ls$ consists of all $\psi \in \QX$ such that
    \begin{enumerate}[label=(\roman*), leftmargin=*, itemsep=5pt]
        \begin{multicols}{2}
        \item $(\psi\mid x_i)=0$ for $i=0,1$,
        \item $\Delta_X(\psi)=\psi\otimes1+1\otimes\psi$,
        \item $\Delta_Y(\pi_Y(\psi))=\pi_Y(\psi)\otimes1+1\otimes\pi_Y(\psi)$,
        \item $(\psi\mid x_0^{n-1}x_1)=0$ for $n\geq2$ even.
        \end{multicols}
    \end{enumerate}
\end{definition}

In \cite[Theorem 5.5]{Br21}, the following is stated without a detailed proof.
\begin{theorem} \label{thm:ls_Lie}
    The pair $(\ls,\{-,-\})$ is a $\Q$-Lie algebra.
\end{theorem}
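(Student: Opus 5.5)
The plan follows the strategy the introduction advertises. I would realize $\ls$, up to the two central generators $x_0,x_1$, as a stabilizer Lie subalgebra of $(\Lie(X),\{-,-\})$. Since $\{-,-\}$ is already a Lie bracket on $\Lie(X)$ (Proposition-Definition on post-Lie algebras), it is enough to show that $\ls$ is closed under $\{-,-\}$. I would do this by identifying $\ls\oplus\Q x_0\oplus\Q x_1$ with a set of $\psi\in\Lie(X)$ cut out by a condition that is visibly stable under brackets.

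\textbf{Step 1: the action.} For $\psi\in\Lie(X)$ I would use the operator $D_\psi := d_\psi + \ell_\psi$ on $\QX$, where $\ell_\psi$ is left multiplication by $\psi$. Equivalently, $D_\psi(w)=d_\psi(w)+\psi w$, so that $D_\psi(1)=\psi$. The post-Lie identities, together with $d_{\psi_1}$ being a derivation, should give
\[
[D_{\psi_1},D_{\psi_2}]=D_{\{\psi_1,\psi_2\}} .
\]
This is a direct check, since both sides are determined by their value on $1$ and their twisted Leibniz behaviour. Next, $D_\psi$ preserves the right ideal $\QX x_0$, because $d_\psi(x_0)=0$. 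Hence $D_\psi$ descends through \eqref{eq:QY_simeq_quotient} to an operator $\overline{D}_\psi$ on $\QY$, and $\overline{D}_\psi(1)=\pi_Y(\psi)$.

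\textbf{Step 2: the stabilizer.} I would define $\stab$ as the set of $\psi\in\Lie(X)$ for which
\[
\Delta_Y\circ\overline{D}_\psi=(\overline{D}_\psi\otimes\id+\id\otimes\overline{D}_\psi)\circ\Delta_Y
\]
holds up to a correction that I expect to involve only the depth-one coefficients. By Step 1 this set is automatically a Lie subalgebra of $(\Lie(X),\{-,-\})$. Evaluating the condition at $1$ gives exactly condition (iii) on $\pi_Y(\psi)$, so the stabilizer lies inside the set defined by (ii) and (iii).

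\textbf{Step 3: the converse, which is the main obstacle.} I have to show that (ii) and (iii) force the full intertwining, that is, that primitivity of $\pi_Y(\psi)$ propagates to every word $y_{n_1}\cdots y_{n_r}$. My approach is induction on $r$. I would compute $\overline{D}_\psi(y_n)$ explicitly: on the $\QX$ side, $d_\psi(x_0^{n-1}x_1)=x_0^{n-1}[x_1,\psi]$, and after projecting, $\overline{D}_\psi(y_n w)$ should be expressible in terms of $y_n\overline{D}_\psi(w)$ together with terms of the form $\pi_Y(x_0^{n-1}x_1\psi)$ and $\pi_Y(x_0^{n-1}\psi x_1)$. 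Two ingredients are then needed. The first is Lie-ness of $\psi$, to commute $x_0^{n-1}$ past $\psi$ and turn the $x_0^{n-1}\psi$ terms into $\pi_Y$ of shifted elements. The second is primitivity of $\pi_Y(\psi)$. Here the depth-one and parity issues enter: the failure of intertwining should be measured by the coefficients $(\psi\mid x_0^{n-1}x_1)$. I expect the odd ones to cancel through the symmetry $\psi\mapsto$ antipode, while the even ones do not cancel, which is where condition (iv) comes in. The claim to aim for is therefore this: for $\psi$ satisfying (ii) and (iii), the intertwining holds if and only if the even depth-one coefficients vanish, with the exception of $\psi=x_0,x_1$ themselves.

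\textbf{Step 4: conclusion.} This yields $\stab=\ls\oplus\Q x_0\oplus\Q x_1$, where the extra summands account for condition (i). By Lemma \ref{lem:Ihara_with_x0_x1}, $x_0$ and $x_1$ are central. Consequently, for $\psi_1,\psi_2\in\ls$ the bracket $\{\psi_1,\psi_2\}$ lies in $\stab$. It has weight at least $2$, so its $x_0,x_1$-components vanish, and therefore $\{\psi_1,\psi_2\}\in\ls$.
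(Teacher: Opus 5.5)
Your architecture matches the paper's. Your $D_\psi$ is the paper's $s_\psi=\ell_\psi+d_\psi$ (not its Racinet derivation $D_\psi$), your $\overline{D}_\psi$ is $s^Y_\psi$, evaluation at $1$ gives (iii) as in the paper, and the final step via centrality of $x_0,x_1$ is the paper's. In Step 2, drop the hedge ``up to a correction'': only the \emph{exact} stabilizer is automatically a Lie subalgebra.

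The gap is Step 3. All the content lies there, and you only state it as an expectation; in particular $\ls\subset\stab_{\Lie(X)}(\Delta_Y)$ is not proved. Your induction on $r$ can be made precise. One has $\overline{D}_\psi(y_nw)=y_n\overline{D}_\psi(w)+c_nw$ with $c_n=[\psi,x_0^{n-1}]x_1\in\QY$, so $c_n=D^Y_\psi(y_n)$ for the derivation $D^Y_\psi=\overline{D}_\psi-r_{\pi_Y(\psi)}$. Induction then shows that $\psi$ lies in the stabilizer iff $\pi_Y(\psi)$ and every $c_n$ are $\Delta_Y$-primitive. Proving that $c_n$ is primitive for $\psi\in\ls$ of weight $p$ needs three ingredients missing from your sketch.

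First, $(\psi\mid x_0)=0$ forces $\gamma_0(\psi)=0$, hence $\psi=\sec(\pi_Y(\psi))$. So $\psi=\sum_i f_ix_0^i$ with $f_i=\frac{(-1)^i}{i!}\gamma_0^i(\pi_Y(\psi))$. Since $\gamma_0$ is a coderivation of $\Delta_Y$, all $f_i$ and $\overline{f}_i=(-1)^pR_Y(f_i)$ are primitive. Primitivity of $\pi_Y(\psi)$ alone says nothing about the $f_i$ without this step.

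Second, Furusho's formula $c_n=\sum_i(f_iy_{i+n}+y_{i+n}\overline{f}_i)$, which uses $S_X(\psi)=-\psi$; this is the precise form of your ``commute $x_0^{n-1}$ past $\psi$''. It shows that the non-primitive part of $c_n$ is $\sum_i(f_{i,0}\otimes y_{i+n}+y_{i+n}\otimes f_{i,0})$ with $f_{i,0}=f_i+\overline{f}_i$.

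Third, the parity mechanism itself:
\[
f_i+\overline{f}_i=\big(1+(-1)^p\big)(-1)^{p-i}\binom{p-1}{i}(\psi\mid x_0^{p-1}x_1)\,y_{p-i}.
\]
This is obtained by applying the derivation $\gamma_{i+1}$ to Furusho's expression for $c_1=0$. So the obstruction is the single top coefficient $(\psi\mid x_0^{p-1}x_1)$. It is killed for odd $p$ by the sign in $\overline{f}_i$, and for even $p$ by (iv). Your remark that ``the odd ones cancel through the antipode'' points in this direction but does not prove it.

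You also give no argument for the converse, that even depth-one elements $\ad_{x_0}^{m-1}(x_1)$ are not in the stabilizer. The paper settles it by computing $s^Y_{\ad_{x_0}^{m-1}(x_1)}(y_2)$ explicitly. For the statement at hand, though, you can bypass both this and the weight-one cases. The space $\ls$ is bigraded and concentrated in depth $\geq1$, and $\{-,-\}$ is bihomogeneous. Hence for $\psi_1,\psi_2\in\ls$ the bracket has depth $\geq2$, so (i) and (iv) hold automatically. Condition (ii) holds in $\Lie(X)$, and (iii) follows from $\{\psi_1,\psi_2\}\in\stab_{\Lie(X)}(\Delta_Y)$ by evaluating at $1$. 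The theorem therefore needs only the inclusion $\ls\subset\stab_{\Lie(X)}(\Delta_Y)$, which is exactly the part your proposal leaves open.
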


\subsection{Stabilizer interpretation of the linearized double shuffle Lie algebra} 

In this section, our aim is to prove Theorem \ref{thm:ls_Lie} by showing that $\ls$ is essentially the stabilizer of the coproduct $\Delta_Y$ with respect to an action of the Lie algebra $(\Lie(X),\{-,-\})$.

For $\psi \in \Lie(X)$, let $s_\psi$ be the $\Q$-linear endomorphism of $\QX$ given by 
\[
    s_{\psi} := \ell_{\psi} + d_{\psi},
\]
where $\ell_\psi$ is the endomorphism of $\QX$ given by left multiplication by $\psi$.

\begin{lemma}[\cite{Ra02}, (3.1.9.2)] \label{lem:Lib_act_QX}
    There exists a $\Q$-Lie algebra action of $(\Lie(X), \{-,-\})$ by $\Q$-linear endomorphisms on $\QX$ given by
    \[
        (\Lie(X), \{-,-\}) \to \mathrm{End}_{\Q}(\QX), \quad \psi \mapsto s_{\psi}.
    \]
\end{lemma}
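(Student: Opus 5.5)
We must show that the map $\psi\mapsto s_\psi$ is a Lie algebra morphism, i.e.
\[
[s_{\psi_1},s_{\psi_2}]=s_{\{\psi_1,\psi_2\}}\qquad\text{for all }\psi_1,\psi_2\in\Lie(X),
\]
as endomorphisms of $\QX$. The plan is to expand $s_\psi=\ell_\psi+d_\psi$ in the commutator and compute the three kinds of terms separately.

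\textbf{Step 1: the derivation–multiplication terms.} Since $d_{\psi_1}$ is a derivation of $\QX$, for every $w\in\QX$ we have
\[
d_{\psi_1}(\psi_2 w)=d_{\psi_1}(\psi_2)\,w+\psi_2\,d_{\psi_1}(w).
\]
This gives the identity
\[
[d_{\psi_1},\ell_{\psi_2}]=\ell_{d_{\psi_1}(\psi_2)}.
\]
Combining it with $[\ell_{\psi_1},\ell_{\psi_2}]=\ell_{[\psi_1,\psi_2]}$, the left-multiplication part of $[s_{\psi_1},s_{\psi_2}]$ equals
\[
\ell_{[\psi_1,\psi_2]+d_{\psi_1}(\psi_2)-d_{\psi_2}(\psi_1)}=\ell_{\{\psi_1,\psi_2\}}
\]
by \eqref{eq:Ihara_bracket}.

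\textbf{Step 2: the derivation part.} It remains to show
\[
[d_{\psi_1},d_{\psi_2}]=d_{\{\psi_1,\psi_2\}}.
\]
Both sides are derivations of the free algebra $\QX$, so it suffices to compare them on the generators $x_0$ and $x_1$.

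On $x_0$ both sides vanish, since every $d_\psi$ kills $x_0$.

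On $x_1$, the left side is
\[
d_{\psi_1}([x_1,\psi_2])-(1\leftrightarrow2)
=[[x_1,\psi_1],\psi_2]+[x_1,d_{\psi_1}\psi_2]-[[x_1,\psi_2],\psi_1]-[x_1,d_{\psi_2}\psi_1].
\]
By the Jacobi identity,
\[
[[x_1,\psi_1],\psi_2]-[[x_1,\psi_2],\psi_1]=[x_1,[\psi_1,\psi_2]].
\]
Hence the left side equals $[x_1,\{\psi_1,\psi_2\}]$, which is $d_{\{\psi_1,\psi_2\}}(x_1)$.

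\textbf{Conclusion.} Combining Steps 1 and 2 gives the claim. This argument also shows directly that $\{-,-\}$ satisfies the Jacobi identity, provided $\psi\mapsto s_\psi$ is injective; injectivity holds because $s_\psi(1)=\psi$. This is consistent with Proposition-Definition \ref{propdef:post-Lie}, though we do not need it.

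The only point needing care is the bookkeeping of the cross terms in Step 1: the identity $[d_{\psi_1},\ell_{\psi_2}]=\ell_{d_{\psi_1}(\psi_2)}$ must be applied with the correct signs for both orderings. Otherwise the proof is routine.
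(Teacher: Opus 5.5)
Your proof is correct, and Step 1 is exactly the computation the paper does in the proof of Proposition \ref{prop:generic_act_end}\ref{generic_act_end}: expanding $\ell_\psi+\mathrm{der}_\psi$ and absorbing the cross terms via the derivation rule.

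The difference is in Step 2. The paper obtains $[d_{\psi_1},d_{\psi_2}]=d_{\{\psi_1,\psi_2\}}$ from part \ref{generic_act_der} of that proposition. This rests on the assertion that $(\Lie(X),[-,-],d)$ is a post-Lie algebra, which the paper states but never checks. You prove the identity directly: both sides are derivations of the free algebra, so you compare them on $x_0,x_1$ and use the Jacobi identity. Your Step 2 is thus a hands-on verification of the post-Lie compatibility $\mathrm{der}_{\{\psi_1,\psi_2\}}=[\mathrm{der}_{\psi_1},\mathrm{der}_{\psi_2}]$ in this instance.

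This makes your argument more self-contained. It also gives the Jacobi identity for the Ihara bracket through $s_\psi(1)=\psi$, which the paper takes from \cite{elm}. For that byproduct you should add that $\{\psi_1,\psi_2\}\in\Lie(X)$. This holds because $d_\psi$ sends $x_0,x_1$ into $\Lie(X)$ and hence preserves $\Lie(X)$.

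The paper's route buys uniformity. The same abstract Proposition \ref{prop:generic_act_end} is reused verbatim for $(\Lie(B),\{-,-\}_A)$ in Lemma \ref{lem:Lib_act_QB}. There the analogue of your Step 2, $[\partial_{\psi_1},\partial_{\psi_2}]=\partial_{\{\psi_1,\psi_2\}_A}$, is not a two-line generator check and is imported from \cite{BK25}.
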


\begin{proof}
As $(\Lie(X),\{-,-\})$ is induced by a post-Lie structure, we can apply Proposition \ref{prop:generic_act_end} \ref{generic_act_end} with $\mathrm{end}_\psi = s_\psi$ for any $\psi \in \Lie(X)$ to obtain the above statement. 
\end{proof}
\noindent Notice that $s_\psi$ is an endomorphism of $\QX x_0$. Therefore, thanks to \eqref{eq:QY_simeq_quotient} it follows that there exists a unique $\Q$-linear endomorphism $s^Y_\psi$ of $\QY$ such that the following diagram
\[\begin{tikzcd}
    \QX \ar[rr, "s_\psi"] \ar[d, "\pi_Y"'] && \QX \ar[d, "\pi_Y"] \\
    \QY \ar[rr, "s^Y_\psi"] && \QY
\end{tikzcd}\]
commutes. Using this fact and Lemma \ref{lem:Lib_act_QX}, one shows the following.
\begin{proposition}[\cite{Ra02}, §4.1.1 and \cite{EF18}, Lemma 2.2] \label{prop:Lib_act_QY}
    There exists a $\Q$-Lie algebra action of $(\Lie(X), \{-,-\})$ by $\Q$-linear endomorphisms on $\QY$ given by
    \[
        (\Lie(X), \{-,-\}) \to \mathrm{End}_{\Q}(\QY), \quad \psi \mapsto s_\psi^Y.
    \]
\end{proposition}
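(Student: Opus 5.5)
We will deduce the statement from Lemma \ref{lem:Lib_act_QX} by passing to the quotient \eqref{eq:QY_simeq_quotient}. The first step is to verify the claim used to define $s^Y_\psi$: for every $\psi\in\Lie(X)$, the subspace $\QX x_0$ is stable under $s_\psi$. We check the two summands of $s_\psi=\ell_\psi+d_\psi$ separately.
\begin{itemize}
\item Left multiplication $\ell_\psi$ clearly maps $\QX x_0$ into itself, since $\psi\, w x_0=(\psi w)x_0$.
\item For $d_\psi$, the Leibniz rule together with $d_\psi(x_0)=0$ gives
\[
d_\psi(wx_0)=d_\psi(w)x_0+w\,d_\psi(x_0)=d_\psi(w)x_0\in\QX x_0 .
\]
\end{itemize}
Since $\pi_Y$ is the projection along $\QX x_0$, i.e.\ $\ker\pi_Y=\QX x_0$, the endomorphism $s_\psi$ descends to a unique $s^Y_\psi$ with $\pi_Y\circ s_\psi=s^Y_\psi\circ\pi_Y$. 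Concretely, $s^Y_\psi=\pi_Y\circ s_\psi\circ i_Y$, using $\pi_Y\circ i_Y=\id$.

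The second step is linearity: the map $\psi\mapsto s^Y_\psi$ is $\Q$-linear. This holds because $\psi\mapsto s_\psi$ is linear (both $\ell_\psi$ and $d_\psi$ depend linearly on $\psi$) and $\pi_Y$ is linear.

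The third and main step is the bracket compatibility
\[
s^Y_{\{\psi_1,\psi_2\}}=[s^Y_{\psi_1},s^Y_{\psi_2}] .
\]
By Lemma \ref{lem:Lib_act_QX} we have $s_{\{\psi_1,\psi_2\}}=s_{\psi_1}s_{\psi_2}-s_{\psi_2}s_{\psi_1}$ on $\QX$. Compose this identity on the left with $\pi_Y$ and use the intertwining relation twice:
\[
\pi_Y s_{\psi_1}s_{\psi_2}=s^Y_{\psi_1}\pi_Y s_{\psi_2}=s^Y_{\psi_1}s^Y_{\psi_2}\pi_Y ,
\]
and similarly with $\psi_1,\psi_2$ interchanged. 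This yields
\[
s^Y_{\{\psi_1,\psi_2\}}\circ\pi_Y=[s^Y_{\psi_1},s^Y_{\psi_2}]\circ\pi_Y .
\]
Since $\pi_Y$ is surjective, we may cancel it on the right and obtain the required identity.

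No step is a real obstacle once Lemma \ref{lem:Lib_act_QX} is available. The only point needing care is the first step, namely that $s_\psi$ preserves the kernel of $\pi_Y$. This rests on $d_\psi(x_0)=0$ and on multiplication by $\psi$ being on the left: a right multiplication would not preserve $\QX x_0$ in the required way.
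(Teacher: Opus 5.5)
Your proof is correct and follows the paper's own route. The paper notes that $s_\psi$ preserves $\QX x_0$, descends it along $\pi_Y$ via \eqref{eq:QY_simeq_quotient}, and then deduces the action from Lemma \ref{lem:Lib_act_QX}; you additionally write out the invariance check and the cancellation of the surjective $\pi_Y$, both of which the paper leaves implicit.
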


\noindent Denote by $\mathrm{Cop}(\QY)$, the space of $\Q$-linear morphisms $\QY \to \QY^{\otimes 2}$. The action given in Proposition \ref{prop:Lib_act_QY} gives rise to an action of $(\Lie(X), \{-,-\})$ on the space $\mathrm{Cop}(\QX)$ by $\Q$-linear endomorphisms given by (\cite[§2.5]{EF18})
\begin{eqnarray}
    \label{LA_act_on_Delta*}
    (\Lie(X), \{-,-\}) & \longrightarrow & \mathrm{End}_{\Q}(\mathrm{Cop}(\QY)),
    \\
    \psi & \longmapsto & \Big( D \mapsto \left(s^Y_{\psi} \otimes \id + \id \otimes s^Y_{\psi}\right) \circ D - D \circ s^Y_{\psi}\Big). \notag
\end{eqnarray}

\begin{definition} \label{def:stab_Delta_Y}
    The stabilizer\footnote{The reader may refer to Proposition-Definition \ref{propdef:generic_stab} for a more general statement.} Lie algebra of $\Delta_Y \in \mathrm{Cop}(\QY)$ with respect to the action in \eqref{LA_act_on_Delta*} is the Lie subalgebra of $(\Lie(X), \{-,-\})$ given by
    \[
        \stab_{\Lie(X)}(\Delta_Y) := \left\{ \psi \in \Lie(X) \mid (s^Y_{\psi} \otimes \id + \id \otimes s^Y_{\psi}) \circ \Delta_Y = \Delta_Y \circ s^Y_\psi \right\}.
    \]
    In other words, $\stab_{\Lie(X)}(\Delta_Y)$ consists exactly of the elements $\psi \in \Lie(X)$ such that $s^Y_{\psi}$ is a coderivation of the bialgebra $(\QY, \Delta_Y)$.
\end{definition}

\begin{proposition} \label{prop:ls_subset_stab}
    We have the following inclusion of subspaces of $\Lie(X)$
    \[
        \mathfrak{ls} \subset \stab_{\Lie(X)}(\Delta_Y).
    \]
\end{proposition}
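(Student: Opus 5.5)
The plan is to compute $s^Y_\psi$ explicitly on words of $\QY$ and to split it into a derivation part and a right multiplication part, then check that each part is a coderivation of $(\QY,\Delta_Y)$. Conditions (i)–(iii) of Definition \ref{def:ls} should be what is used here; (iv) is presumably only needed for the reverse inclusion.

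\textbf{Step 1 (recursion for $s^Y_\psi$).} Fix $\psi\in\ls$, so $\psi\in\Lie(X)$ by (ii). For $v\in\QY$ (viewed in $\QX$ via $i_Y$) and $n\geq1$, use that $d_\psi$ is a derivation with $d_\psi(x_0)=0$ and $d_\psi(x_1)=[x_1,\psi]$. This gives
\begin{align*}
s_\psi(x_0^{n-1}x_1v)&=\psi x_0^{n-1}x_1v+x_0^{n-1}(x_1\psi-\psi x_1)v+x_0^{n-1}x_1d_\psi(v)\\
&=[\psi,x_0^{n-1}]x_1v+x_0^{n-1}x_1s_\psi(v).
\end{align*}
Set $f_n:=[\psi,x_0^{n-1}]x_1$. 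It ends in $x_1$, so it lies in $\QY$. Apply $\pi_Y$, which is left $\QY$-linear in the sense that $\pi_Y(uw)=u\,\pi_Y(w)$ for $u\in\QY$. We get
\[
s^Y_\psi(y_nv)=f_n v+y_n s^Y_\psi(v),\qquad s^Y_\psi(1)=\pi_Y(\psi).
\]
By induction on depth, $s^Y_\psi=D_\psi+r_{\pi_Y(\psi)}$. Here $D_\psi$ is the derivation of $\QY$ with $D_\psi(y_n)=f_n$, and $r_{\pi_Y(\psi)}$ is right multiplication by $\pi_Y(\psi)$.

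\textbf{Step 2 (coderivation criteria).} Right multiplication by a $\Delta_Y$-primitive element is a coderivation, because $\Delta_Y$ is an algebra morphism. A derivation of the free algebra $\QY$ that sends each generator to a $\Delta_Y$-primitive element is also a coderivation. Indeed, both sides of the coderivation identity are $\Delta_Y$-derivations from $\QY$ into $\QY^{\otimes2}$, so it suffices to compare them on the generators $y_n$, where the identity is exactly primitivity of $f_n$. By (iii), $\pi_Y(\psi)$ is primitive. Hence the inclusion $\ls\subset\stab_{\Lie(X)}(\Delta_Y)$ reduces to showing that every $f_n$ is $\Delta_Y$-primitive.

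\textbf{Step 3 (main obstacle: primitivity of $f_n$).} Expand $x_0^{m}\psi=\sum_{k}\binom{m}{k}\ad_{x_0}^k(\psi)x_0^{m-k}$, which gives
\[
f_n=-\sum_{k\geq1}\binom{n-1}{k}\ad_{x_0}^k(\psi)\,x_0^{n-1-k}x_1 .
\]
The aim is to express this through $\pi_Y(\psi)$ and operators on $\QY$ that preserve $\Delta_Y$-primitives. First I would write $\psi=\pi_Y(\psi)+\psi'x_0$. Then I would use that $\ad_{x_0}$ and right multiplication by $x_0^jx_1$, followed by $\pi_Y$, act on $\QY$ through substitution-type operators such as $y_{n_1}\cdots y_{n_d}\mapsto\sum_i y_{n_1}\cdots y_{n_i+1}\cdots y_{n_d}$. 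Such operators are derivations of $\QY$ mapping generators to generators, so they preserve $\Delta_Y$-primitives.

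The delicate part is the contribution of $\psi'x_0$, which has to be rewritten using that $\psi$ is a Lie element. For this I would try the $\Delta_X$-primitivity of $\psi$ together with Lemma \ref{lem:Ihara_with_x0_x1}, namely $\{x_0,\psi\}=0$, to transfer the $x_0$-tail back into $\QY$. If this direct route becomes too unwieldy, the fallback is to check primitivity of $f_n$ by pairing against $\shuffle_Y$-products $u\shuffle_Y v$, which is the dual formulation, and to reduce to condition (iii) by a depth induction.
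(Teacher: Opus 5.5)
\textbf{Steps 1 and 2.} These are correct and agree with the paper's reduction. The derivation $D$ with $D(y_n)=[\psi,x_0^{n-1}]x_1$ is exactly $D^Y_\psi$. Since $r_{\pi_Y(\psi)}$ is a coderivation by (iii), the proposition is equivalent to $\Delta_Y$-primitivity of every $f_n$.

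\textbf{The gap is in Step 3.} It starts with your assertion that (iv) is only needed for the reverse inclusion, which is false. Conditions (i)--(iii) together with $\psi\in\Lie(X)$ do not force the $f_n$ to be primitive. Take $\psi=[x_0,x_1]$, or more generally $\ad_{x_0}^{m-1}(x_1)$ with $m\geq2$ even. It satisfies (i)--(iii), since $\pi_Y(\psi)=y_2$, yet
\[
f_2=[\psi,x_0]x_1=2y_2y_2-y_1y_3-y_3y_1,
\qquad
\Delta_Y(f_2)-f_2\otimes1-1\otimes f_2=4\,y_2\otimes y_2-2\,y_1\otimes y_3-2\,y_3\otimes y_1\neq0 .
\]
This is exactly Case 5 of the proof of Theorem \ref{thm:stab_equals_ls}, via Lemma \ref{lem:nonvanishing_sum}, where such elements are shown to lie outside the stabilizer. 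So no argument can work that uses only the Lie property and (iii). Nor does $\{x_0,\psi\}=0$ help: it holds for \emph{every} $\psi\in\Lie(X)$ by Lemma \ref{lem:Ihara_with_x0_x1}, so it carries no information. Your shuffle-pairing and depth-induction fallback fails for the same reason.

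\textbf{What is missing} is an exact formula for the non-primitive part of $f_n$, into which (iv) can be fed. The paper obtains it in four steps.
\begin{enumerate}
\item A Lie element of weight $p\geq2$ lies in $\ker(\gamma_0)$. Hence $\psi=\sec(\pi_Y(\psi))=\sum_i f_ix_0^i$ with $f_i=\frac{(-1)^i}{i!}\gamma_0^i(\pi_Y(\psi))\in\Lie(Y)$, by Lemma \ref{lem:sec_piY} and Corollary \ref{cor:partial0_preserves_LibY}.
\item Using $S_X(\psi)=-\psi$, Lemma \ref{lem:DYf_yn} gives $D^Y_\psi(y_n)=\sum_i\bigl(f_iy_{i+n}+y_{i+n}\overline{f}_i\bigr)$, where $\overline{f}_i=(-1)^pR_Y(f_i)\in\Lie(Y)$.
\item Consequently the defect of primitivity of $f_n$ is $\sum_i (f_i+\overline{f}_i)\otimes y_{i+n}+y_{i+n}\otimes(f_i+\overline{f}_i)$.
\item Applying $\gamma_{i+1}$ to $0=D^Y_\psi(y_1)$ identifies $f_i+\overline{f}_i=(1+(-1)^p)(-1)^{p-i}\binom{p-1}{i}(\psi\mid x_0^{p-1}x_1)\,y_{p-i}$.
\end{enumerate}
Thus the obstruction is a depth-one term. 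It vanishes automatically in odd weight, and in even weight precisely because of condition (iv). Your Step 3 needs this computation, or an equivalent one isolating the depth-one, even-weight obstruction, and it must invoke (iv).
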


\noindent To prove this proposition, we first introduce some preliminary definitions and results. 
Let $D_f^Y$ be the $\Q$-linear endomorphism of $\QY$ given by 
\[
    D_f^Y := s_f^Y - r_{\pi_Y(f)},
\]
where $r_{\pi_Y(f)}$ denotes the right multiplication with $\pi_Y(f)$.
\begin{lemma}[{\cite[Proposition 4.1.4]{Ra02}}]
    The endomorphism $D_f^Y$ is a derivation of $\QY$.
\end{lemma}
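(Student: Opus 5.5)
The plan is to lift $D_f^Y$ to a derivation of $\QX$ that preserves the subalgebra $i_Y(\QY)$, and then to see that $D_f^Y$ is just its restriction. Throughout, $f\in\Lie(X)$, and $\QY$ is identified with $i_Y(\QY)$. This image is the span of $1$ and of all words ending in $x_1$, and it is a subalgebra of $\QX$.

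\textbf{Step 1: the lift.} On $\QX$ I would consider
\[
D_f:=s_f-r_f=\ell_f-r_f+d_f=\ad_f+d_f .
\]
Both $\ad_f$ and $d_f$ are derivations of $\QX$, so $D_f$ is a derivation. On generators we get
\[
D_f(x_0)=[f,x_0],\qquad D_f(x_1)=[f,x_1]+[x_1,f]=0 .
\]

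\textbf{Step 2: $D_f$ preserves $\QY$.} We have $D_f(1)=0$. For any word $u$, the Leibniz rule and $D_f(x_1)=0$ give $D_f(ux_1)=D_f(u)\,x_1$. Hence $D_f$ maps $\QX x_1$ into $\QX x_1\subset\QY$. So $D_f$ restricts to an endomorphism of $\QY$. This restriction is a derivation of $\QY$, because $\QY$ is a subalgebra of $\QX$ and $D_f$ is a derivation of $\QX$.

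\textbf{Step 3: comparison with $D_f^Y$.} For $w\in\QY$, the defining square of $s^Y_f$ gives $s_f^Y(w)=\pi_Y(s_f(w))$, since $\pi_Y(w)=w$. I would next check that $\pi_Y(wu)=w\,\pi_Y(u)$ for $w\in\QY$ and $u\in\QX$. This holds because $\pi_Y$ is the projection along $\QX x_0$, which is a left ideal, and because $w\,\QY\subset\QY$. Applied to $u=f$, this gives $r_{\pi_Y(f)}(w)=\pi_Y(wf)=\pi_Y(r_f(w))$. Therefore
\[
D_f^Y(w)=\pi_Y\big(s_f(w)-r_f(w)\big)=\pi_Y(D_f(w))=D_f(w),
\]
where the last equality uses Step 2, namely $D_f(w)\in\QY$. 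Thus $D_f^Y=D_f|_{\QY}$, which is a derivation by Step 2.

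The only point needing care is the compatibility $\pi_Y(wu)=w\,\pi_Y(u)$ in Step 3. It is a direct check on words: appending $w$ on the left does not change whether a word ends in $x_0$. Everything else is formal.
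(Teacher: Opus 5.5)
Your proof is correct and follows the route the paper indicates. The paper cites Racinet for this lemma and notes (via Racinet's Remark 4.1.5) that $D_f^Y$ is the restriction to $\QY$ of the derivation $D_f = d_f + \ad_f$ of $\QX$. Your Steps 1--3 establish exactly this: $D_f(x_1)=0$ gives $D_f(\Q\oplus\QX x_1)\subset\Q\oplus\QX x_1$, and the compatibility $\pi_Y(wu)=w\,\pi_Y(u)$ for $w\in\QY$ identifies the restriction with $D_f^Y$, which fills in the details the paper leaves to the reference.
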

In fact, thanks to \cite[Remark 4.1.5]{Ra02}, it is the restriction to $\QY$ of the derivation $D_f$ of $\QX$ given by
\[
    D_f := d_f + \mathrm{ad}_f,
\]
where one sets $\mathrm{ad}_f := [f, -]$.

Define $R_Y$ to be the algebra antiautomorphism of $\QY$ given by $y_n \mapsto y_n$ ($n \in \mathbb{Z}_{\geq 1}$). One checks that
\begin{equation} \label{eq:RY_DeltaY}
    \Delta_Y \circ R_Y = (R_Y \otimes R_Y) \circ \Delta_Y.
\end{equation}
Let $f$ be a homogeneous weight $p$ element of $\QX$. Such an element can be written as
\[
    f = \sum_{i=0}^p f_i x_0^i,
\]
where $f_i$ is a homogeneous weight $p-i$ element of $\QY$, for any $i \in \{0, \dots, p\}$. To $f$ one associates the sequence $(f_{i,j})_{i \in \{0, \dots, p-1\}, j \in \mathbb{Z}_{\geq 0}}$ given by
\[
    f_{i,0} = f_i + \overline{f}_i \text{ and } f_{i,j} = f_i y_j + y_j \overline{f}_i, \text{ for } j \geq 1,
\]
with $\overline{f}_i := (-1)^p R_Y(f_i)$.

Denote by $S_X$ the antipode of the Hopf algebra $(\QX, \Delta_X)$, that is, the algebra antiautomorphism of $\QX$ given by $x_i \mapsto - x_i$ ($i\in\{0,1\}$).
\begin{lemma}[{\cite[Lemma A.2]{Fu11}}] \label{lem:DYf_yn}
    Assume that $f$ satisfies $S_X(f) = - f$. Then, for $n \in \mathbb{Z}_{\geq 1}$, one has
    \[
        D_f^Y(y_n) = \sum_{i=0}^p f_{i,i+n}.
    \]
\end{lemma}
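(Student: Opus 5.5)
The plan is to compute $D_f^Y(y_n)$ directly, using the fact recalled just above (from \cite[Remark 4.1.5]{Ra02}) that $D_f^Y$ is the restriction to $\QY\subset\QX$ of the derivation $D_f=d_f+\ad_f$. So I will compute $D_f(x_0^{n-1}x_1)$ in $\QX$ and then rewrite the result in the letters $y_m$.

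\textbf{Step 1: values of $D_f$ on the generators.} By definition of $d_f$ we get $D_f(x_1)=[x_1,f]+[f,x_1]=0$ and $D_f(x_0)=[f,x_0]=fx_0-x_0f$. By the Leibniz rule,
\[
D_f(x_0^{n-1}x_1)=\sum_{a+b=n-2}x_0^{a}\,(fx_0-x_0f)\,x_0^{b}x_1 .
\]
This sum telescopes to $f\,x_0^{n-1}x_1-x_0^{n-1}f\,x_1$.

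\textbf{Step 2: the first term.} Write $f=\sum_{i=0}^p f_ix_0^i$. Then
\[
f\,x_0^{n-1}x_1=\sum_i f_i\,x_0^{i+n-1}x_1=\sum_i f_iy_{i+n}.
\]

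\textbf{Step 3: the second term, using the hypothesis $S_X(f)=-f$.} Since $S_X$ is an antiautomorphism with $x_j\mapsto -x_j$, for a homogeneous element $g$ of weight $w$ we have $S_X(g)=(-1)^w\,\mathrm{rev}(g)$, where $\mathrm{rev}$ reverses words in the letters $x_0,x_1$. Hence
\[
S_X(f)=\sum_i(-1)^i x_0^i\,(-1)^{p-i}\mathrm{rev}(f_i)=(-1)^p\sum_i x_0^i\,\mathrm{rev}(f_i).
\]
The hypothesis then gives $f=-(-1)^p\sum_i x_0^i\,\mathrm{rev}(f_i)$, and therefore
\[
-x_0^{n-1}f\,x_1=(-1)^p\sum_i x_0^{n+i-1}\,\mathrm{rev}(f_i)\,x_1 .
\]
The key combinatorial observation concerns a word $f_i=y_{k_1}\cdots y_{k_d}=x_0^{k_1-1}x_1\cdots x_0^{k_d-1}x_1$. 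Its reversal is $x_1x_0^{k_d-1}\cdots x_1x_0^{k_1-1}$. Prefixing $x_0^{n+i-1}$ and appending $x_1$ regroups this as
\[
x_0^{n+i-1}x_1\cdot x_0^{k_d-1}x_1\cdots x_0^{k_1-1}x_1=y_{n+i}\,R_Y(f_i).
\]
This also holds when $f_i$ is a scalar, which covers the terms with $i=p$. Extending linearly gives $-x_0^{n-1}f\,x_1=\sum_i y_{n+i}\overline{f}_i$.

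\textbf{Step 4: conclusion.} Adding Steps 2 and 3 yields
\[
D_f^Y(y_n)=\sum_{i=0}^p\bigl(f_iy_{i+n}+y_{i+n}\overline{f}_i\bigr)=\sum_{i=0}^p f_{i,i+n},
\]
as claimed. The only delicate point is the bookkeeping in Step 3: keeping track of signs and checking that the reversal of an $x$-word, once sandwiched between $x_0^{n+i-1}$ and $x_1$, is exactly $y_{n+i}R_Y(f_i)$. This includes the edge case $d=0$ and the term $i=p$, where $f_{p,p+n}=(1+(-1)^p)f_py_{p+n}$. Everything else is a formal telescoping computation.
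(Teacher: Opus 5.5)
Your proof is correct. The paper does not prove this lemma; it imports it from \cite[Lemma A.2]{Fu11}. Your direct computation is therefore a complete, self-contained verification, and I checked each step.

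The computation runs as follows. You get $D_f(y_n)=f\,x_0^{n-1}x_1-x_0^{n-1}f\,x_1$ from $D_f(x_0)=[f,x_0]$ and $D_f(x_1)=0$. You then use $S_X(f)=-f$ to rewrite $f=(-1)^{p+1}\sum_i x_0^i\,\mathrm{rev}(f_i)$. Finally, the regrouping $x_0^{n+i-1}\,\mathrm{rev}(f_i)\,x_1=y_{n+i}R_Y(f_i)$ finishes it. This relies on the paper's assertion (from Racinet) that $D_f^Y$ is the restriction of $D_f$. That assertion is easy to confirm directly: $\pi_Y(wf)=w\,\pi_Y(f)$ for $w$ in the image of $\QY$, and $D_f$ preserves that image.

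Your handling of the index $i=p$ is also worth keeping. The paper defines $f_{i,j}$ only for $i\le p-1$ but sums up to $i=p$. You observed that the extra term is $(1+(-1)^p)f_p\,y_{p+n}$. Since $S_X(f)=-f$ forces $f_p=0$ when $p$ is even, this term always vanishes, so the discrepancy is harmless.
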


Let $\gamma_0$ be the derivation of $\QX$ given by $x_0 \mapsto 1$ and $x_1 \mapsto 0$. Define $\sec : \QY \to \QX$ to be the linear map given by 
\[
    w \mapsto \sum_{i \geq 0} \frac{(-1)^i}{i!} \gamma_0^i(w) x_0^i.
\]
One immediately checks that $\pi_Y \circ \sec = \id_{\QY}$. In fact, one also has the following.
\begin{lemma}[{\cite[Proposition 4.2.2]{Ra02}}] \label{lem:sec_piY}
    The map $\sec : \QY \to \ker(\gamma_0)$ is the inverse of ${\pi_Y}_{|\ker(\gamma_0)} : \ker(\gamma_0) \to \QY$.
\end{lemma}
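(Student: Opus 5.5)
The plan is to check three things: that $\sec$ lands in $\ker(\gamma_0)$, that $\pi_Y\circ\sec=\id_{\QY}$, and that ${\pi_Y}_{|\ker(\gamma_0)}$ is injective. The second is already recorded just before the statement: in $\sec(w)$ only the $i=0$ term $w$ survives under $\pi_Y$, because the terms with $i\geq1$ lie in $\QX x_0$. Together, the three facts show that ${\pi_Y}_{|\ker(\gamma_0)}$ is a bijection with inverse $\sec$.

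\emph{Step 1: $\gamma_0\circ\sec=0$.} Since $\gamma_0$ is a derivation of $\QX$ with $\gamma_0(x_0)=1$, we have $\gamma_0(x_0^i)=i\,x_0^{i-1}$. Hence
\[
\gamma_0\big(\gamma_0^i(w)x_0^i\big)=\gamma_0^{i+1}(w)x_0^i+i\,\gamma_0^i(w)x_0^{i-1}.
\]
I will multiply by $\frac{(-1)^i}{i!}$ and sum over $i\geq0$. The sum is finite, since $\gamma_0$ strictly lowers the number of letters $x_0$. The sum then telescopes: the contribution $\frac{(-1)^i}{i!}\gamma_0^{i+1}(w)x_0^i$ cancels against the second term of index $i+1$, which equals $\frac{(-1)^{i+1}}{(i+1)!}(i+1)\gamma_0^{i+1}(w)x_0^{i}$. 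Thus $\gamma_0(\sec(w))=0$.

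\emph{Step 2: injectivity.} Iterating $\QX=\QY\oplus\QX x_0$ gives a decomposition $\QX=\bigoplus_{i\geq0}\QY x_0^i$ in each weight. Let $f\in\ker(\gamma_0)$ with $\pi_Y(f)=0$, so that $f=\sum_{i\geq1}f_ix_0^i$ with $f_i\in\QY$. Suppose $f\neq0$, and let $m\geq1$ be minimal with $f_m\neq0$. Then
\[
\gamma_0(f)=\sum_{i\geq m}\big(\gamma_0(f_i)x_0^i+i\,f_ix_0^{i-1}\big).
\]
Each term $\gamma_0(f_i)x_0^i$ lies in $\bigoplus_{j\geq i}\QY x_0^j$, and so does $i\,f_ix_0^{i-1}$ for $i>m$. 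Therefore the component of $\gamma_0(f)$ in $\QY x_0^{m-1}$ is exactly $m f_m\neq0$. This contradicts $\gamma_0(f)=0$, so $f=0$.

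Combining the steps, ${\pi_Y}_{|\ker(\gamma_0)}$ is injective by Step 2 and is surjective with right inverse $\sec$ by Step 1 and $\pi_Y\circ\sec=\id_{\QY}$. Hence it is bijective with inverse $\sec$. The only mildly delicate point is the bookkeeping in Step 2, namely that $\gamma_0(f_i)x_0^i$ has no component below $x_0$-level $i$. This holds because $\gamma_0(f_i)\in\QX=\bigoplus_j\QY x_0^j$ and right multiplication by $x_0^i$ shifts every level up by $i$.
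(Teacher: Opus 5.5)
Your proof is correct. The paper gives no argument of its own for this lemma; it is quoted from \cite[Proposition 4.2.2]{Ra02}. So there is no in-paper proof to compare against, and your three steps give a self-contained verification of what the paper takes on citation. Those steps are the telescoping identity $\gamma_0\circ\sec=0$, the identity $\pi_Y\circ\sec=\id_{\QY}$, and injectivity of $\pi_Y$ on $\ker(\gamma_0)$ via the lowest $x_0$-level.

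Two small remarks. First, the phrase ``in each weight'' is unnecessary. Every word factors uniquely as $ux_0^i$ with $u$ empty or ending in $x_1$, so $\QX=\bigoplus_{i\geq0}\QY x_0^i$ holds outright. Moreover, since $\gamma_0(\QY)\subset\QY$, the term $\gamma_0(f_i)x_0^i$ actually lies in $\QY x_0^i$, which makes your bookkeeping even simpler.

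Second, you could replace Step 2 by a telescoping computation of the same kind as Step 1. Extend the formula for $\sec$ to all of $\QX$. It is clearly the identity on $\ker(\gamma_0)$, and using $\gamma_0^i(ux_0)=\gamma_0^i(u)x_0+i\gamma_0^{i-1}(u)$ one checks that it kills $\QX x_0$. Then for $f\in\ker(\gamma_0)$ one gets $f=\sec(f)=\sec(\pi_Y(f))$ immediately. This version exhibits $\sec$ as the projection of $\QX$ onto $\ker(\gamma_0)$ along $\QX x_0$. It makes explicit the underlying fact that $\QY$ and $\ker(\gamma_0)$ are two complements of the same subspace $\QX x_0$. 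Your leading-term argument gets $\ker(\gamma_0)\cap\QX x_0=0$ without that extra computation, so both routes work equally well.
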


\noindent One checks through a direct computation that
\[
    \gamma_0(y_1) = 0 \text{ and } \gamma_0(y_n) = (n-1) y_{n-1}, \text{ for } n \in \mathbb{Z}_{\geq 2}.
\]
The restriction of $\gamma_0$ to $\QY$ also defines a derivation that we will abusively denote $\gamma_0$ as well.
\begin{lemma} \label{lem:coder_partial0}
We have 
\[\Delta_Y \circ \gamma_0 = (\gamma_0 \otimes \id + \id \otimes \gamma_0) \circ \Delta_Y.\]
\end{lemma}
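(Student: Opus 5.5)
Both sides of the identity are algebra morphisms twisted by derivations, so I will reduce it to a check on the generators $y_n$. Since $\Delta_Y$ is an algebra morphism $\QY\to\QY^{\otimes 2}$ and $\gamma_0$ restricts to a derivation of $\QY$, both $\Delta_Y\circ\gamma_0$ and $(\gamma_0\otimes\id+\id\otimes\gamma_0)\circ\Delta_Y$ are $\Delta_Y$-derivations from $\QY$ to $\QY^{\otimes 2}$. That is, each map $\phi$ satisfies
\[
\phi(uv)=\phi(u)\Delta_Y(v)+\Delta_Y(u)\phi(v).
\]
For the first map this is immediate from $\gamma_0$ being a derivation and $\Delta_Y$ being multiplicative. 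For the second, it holds because $\gamma_0\otimes\id+\id\otimes\gamma_0$ is a derivation of the tensor product algebra $\QY^{\otimes 2}$, and a derivation composed with an algebra morphism gives a $\Delta_Y$-derivation.

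A $\Delta_Y$-derivation is determined by its values on the algebra generators $y_n$, $n\geq1$. Indeed, the difference of two such maps is again a $\Delta_Y$-derivation, and it vanishes on all words by induction on length if it vanishes on the letters. Both maps also kill $1$.

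It therefore remains to compare the two maps on each $y_n$, using the formula $\gamma_0(y_1)=0$ and $\gamma_0(y_n)=(n-1)y_{n-1}$ for $n\geq2$, together with the primitivity $\Delta_Y(y_m)=y_m\otimes1+1\otimes y_m$. The left side gives
\[
\Delta_Y(\gamma_0(y_n))=(n-1)(y_{n-1}\otimes1+1\otimes y_{n-1}),
\]
which is read as $0$ when $n=1$. The right side gives
\[
\gamma_0(y_n)\otimes1+y_n\otimes\gamma_0(1)+\gamma_0(1)\otimes y_n+1\otimes\gamma_0(y_n),
\]
which is the same expression because $\gamma_0(1)=0$. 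This completes the argument.

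The only point needing care is the justification that the restriction of $\gamma_0$ to $\QY$ is well defined, i.e.\ that $\gamma_0$ preserves $i_Y(\QY)$. This follows from the displayed values of $\gamma_0(y_n)$ and the Leibniz rule, and is exactly what the paper records just before the statement. So the argument itself is essentially routine; the main step is recognizing the derivation structure rather than performing a word-by-word computation.
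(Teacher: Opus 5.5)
Your proof is correct and follows the same route as the paper: reduce to the generators $y_n$, then compare both sides using $\gamma_0(y_n)=(n-1)y_{n-1}$ (and $\gamma_0(y_1)=0$) together with the primitivity of the $y_m$. Your added justification of the reduction, namely that both sides are $\Delta_Y$-derivations $\QY\to\QY^{\otimes 2}$, is the step the paper leaves implicit.
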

\begin{proof}
    It is enough to check this identity on the generators of $\QY$. This is immediate for $y_1$. For $n \geq 2$, we have
    \begin{align*}
        \Delta_Y \circ \gamma_0(y_n) & = (n-1) \Delta_Y(y_{n-1}) = (n-1) y_{n-1} \otimes 1 + 1 \otimes (n-1) y_{n-1} \\
        & = (\gamma_0 \otimes \id + \id \otimes \gamma_0)(y_n \otimes 1 + 1 \otimes y_n) = (\gamma_0 \otimes \id + \id \otimes \gamma_0) \circ \Delta_Y(y_n). \qedhere
    \end{align*}
\end{proof}

Let $\Lie(Y)$ be the free $\Q$-Lie algebra over the alphabet $Y$. In a similar way to $\Lie(X)$, the $\Q$-Lie algebra $\Lie(Y)$ is identified with the Lie subalgebra of primitive elements in $\QY$ for the coproduct $\Delta_Y$ from \eqref{eq:shuffle_coprod_Y}. Namely,
\[
    \Lie(Y) = \{ \phi \in \QY \mid \Delta_Y(\phi) = \phi \otimes 1 + 1 \otimes \phi \}.
\]

\begin{corollary} \label{cor:partial0_preserves_LibY}
    If $w \in \Lie(Y)$, then $\gamma_0(w) \in \Lie(Y)$.
\end{corollary}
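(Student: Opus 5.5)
The corollary follows directly from Lemma~\ref{lem:coder_partial0}, together with the characterization of $\Lie(Y)$ as the space of primitive elements of $(\QY,\Delta_Y)$. Let $w\in\Lie(Y)$, so that $\Delta_Y(w)=w\otimes1+1\otimes w$. We apply Lemma~\ref{lem:coder_partial0} to $w$ and obtain
\[
\Delta_Y(\gamma_0(w))=(\gamma_0\otimes\id+\id\otimes\gamma_0)(w\otimes1+1\otimes w).
\]

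Next we expand the right-hand side. This gives
\[
\gamma_0(w)\otimes 1+w\otimes\gamma_0(1)+\gamma_0(1)\otimes w+1\otimes\gamma_0(w).
\]
Since $\gamma_0$ is a derivation, $\gamma_0(1)=\gamma_0(1\cdot1)=2\gamma_0(1)$, and hence $\gamma_0(1)=0$. The two middle terms therefore vanish, and we get
\[
\Delta_Y(\gamma_0(w))=\gamma_0(w)\otimes1+1\otimes\gamma_0(w).
\]
This says that $\gamma_0(w)$ is primitive, so $\gamma_0(w)\in\Lie(Y)$ by the identification of $\Lie(Y)$ with the primitive elements of $\QY$.

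A second route is also available: $\gamma_0$ restricted to $\QY$ is a derivation of an associative algebra, so it preserves commutators. It maps each generator $y_n$ to $(n-1)y_{n-1}\in\Lie(Y)$ (or to $0$ when $n=1$). By induction on bracket length, $\gamma_0$ then maps the Lie subalgebra generated by $Y$ into itself. Both arguments are routine. The only point needing care is that $\gamma_0$ actually restricts to $\QY$, and this is already given by the formulas $\gamma_0(y_1)=0$ and $\gamma_0(y_n)=(n-1)y_{n-1}$.
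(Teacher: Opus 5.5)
Your proposal is correct, and your first argument is the paper's own proof: apply Lemma~\ref{lem:coder_partial0} to a primitive $w$ and use that $\Lie(Y)$ is exactly the space of $\Delta_Y$-primitives. You additionally make explicit that $\gamma_0(1)=0$, and your second argument (a derivation mapping each $y_n$ into $\Lie(Y)$ preserves the Lie subalgebra they generate) is also valid and avoids the coproduct entirely.
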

\begin{proof}
    It follows immediately from Lemma \ref{lem:coder_partial0}, since $\Lie(Y)$ consists exactly of the primitive elements for $\Delta_Y$.
\end{proof}

For $n \in \mathbb{Z}_{\geq 1}$, let $\gamma_n$ be the derivation of $\QY$ defined by $y_k \mapsto \delta_{k,n}$, where $\delta_{k,n}$ is Kronecker's delta. The following result is somewhat written in \cite[Proposition 2.3.8]{Ra02}, but we find it useful to clarify the proof.
\begin{lemma} Let $n\in\mathbb{Z}_{\geq 1}.$
    \begin{enumerate}[leftmargin=*, label=(\alph*)]
        \item \label{gamma_n:LieYLieY} $\gamma_n([\Lie(Y), \Lie(Y)]) = 0$.
        \item \label{gamma_n:w} For any $w \in \Lie(Y)$, we have $\gamma_n(w) = (w | y_n)$.
    \end{enumerate}
    \label{lem:gamma_n}
\end{lemma}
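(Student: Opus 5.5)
Both parts follow from the Leibniz rule for $\gamma_n$ together with the fact that $\gamma_n$ takes values in $\Q$ on homogeneous pieces. By definition, $\gamma_n$ is a derivation of $\QY$ with $\gamma_n(y_k)=\delta_{k,n}\in\Q$. On a word $y_{k_1}\cdots y_{k_r}$ the Leibniz rule therefore gives
\[
\gamma_n(y_{k_1}\cdots y_{k_r})=\sum_{j=1}^r \delta_{k_j,n}\, y_{k_1}\cdots \widehat{y_{k_j}}\cdots y_{k_r},
\]
a sum of words of length $r-1$. In particular $\gamma_n$ lowers the length (depth) by one.

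\textbf{Part (a).} For $u,v\in\Lie(Y)$ we have $\gamma_n([u,v])=[\gamma_n(u),v]+[u,\gamma_n(v)]$. So it suffices to know that $\gamma_n(\Lie(Y))\subset\Q$. Note that $\Q\cdot 1$ is central in $\QY$, so any commutator with a scalar vanishes.

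To prove $\gamma_n(\Lie(Y))\subset\Q$, induct on Lie brackets. The space $\Lie(Y)$ is spanned by the $y_k$ and by iterated brackets of them. On generators, $\gamma_n(y_k)=\delta_{k,n}\in\Q$. If $u,v\in\Lie(Y)$ already satisfy $\gamma_n(u),\gamma_n(v)\in\Q$, then
\[
\gamma_n([u,v])=[\gamma_n(u),v]+[u,\gamma_n(v)]=0,
\]
since brackets with scalars vanish. So every bracket in $\Lie(Y)$ is sent to $0$, and every element of $\Lie(Y)$ is sent into $\Q$. This gives (a), and it also shows $\gamma_n([\Lie(Y),\Lie(Y)])=0$ by linearity.

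\textbf{Part (b).} Take $w\in\Lie(Y)$ and use the decomposition $\Lie(Y)=\operatorname{span}_\Q(Y)\oplus[\Lie(Y),\Lie(Y)]$. This holds because a free Lie algebra is graded by length, with the degree-one part equal to $\operatorname{span}_\Q(Y)$ and every higher-degree part consisting of brackets. Write $w=\sum_k c_k y_k + w'$ with $w'\in[\Lie(Y),\Lie(Y)]$. By (a), $\gamma_n(w)=\sum_k c_k\delta_{k,n}=c_n$.

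It remains to identify $c_n$ with $(w\mid y_n)$. Elements of $[\Lie(Y),\Lie(Y)]$ are combinations of words of length at least $2$, so $(w'\mid y_n)=0$, and hence $(w\mid y_n)=c_n$.

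No step is a real obstacle. The only point needing care is the justification that $\gamma_n$ maps all of $\Lie(Y)$ into $\Q$, which is exactly what the induction on brackets provides.
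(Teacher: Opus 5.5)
Your proof is correct and follows essentially the same route as the paper: the Leibniz rule applied to brackets gives (a), and splitting $w$ as $\sum_k (w\mid y_k)\,y_k$ plus an element of $[\Lie(Y),\Lie(Y)]$ gives (b). Your intermediate claim $\gamma_n(\Lie(Y))\subset\Q$ also supplies a justification the paper leaves implicit: the paper only checks the brackets $[y_k,y_l]$, but $[\Lie(Y),\Lie(Y)]$ is not generated as a Lie algebra by those elements alone (for instance $[y_1,[y_1,y_2]]$, of odd length, is not reached from them), so your induction is what makes that reduction valid.
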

\begin{proof}
    \begin{enumerate}[leftmargin=*, label=(\alph*)]
        \item Since $[\Lie(Y), \Lie(Y)]$ is a Lie subalgebra of $\Lie(Y)$ and $\gamma_n$ is a derivation, it is enough to check the desired identity on generators of $[\Lie(Y), \Lie(Y)]$. For $k,l \in \mathbb{Z}_{\geq 1}$, we have
        \[
            \gamma_n([y_k, y_l]) = [\gamma_n(y_k), y_l] + [y_k, \gamma_n(y_l)] = [\delta_{k,n}, y_l] + [y_k, \delta_{l,n}] = 0,   
        \]
        where the last equality comes from the fact that $1$ is central.
        \item Since $w \in \Lie(Y)$, it is a Lie polynomial on the $y_k$'s. It follows that $w - \sum_{k} (w | y_k) y_k$ is an element of $[\Lie(Y), \Lie(Y)]$. Therefore, thanks to \ref{gamma_n:LieYLieY}, we have
        \[
            \gamma_n\left(w - \sum_{k} (w | y_k) y_k\right) = 0.
        \]
        The result then follows by the linearity of $\gamma_n$ and the identity $\gamma_n(y_k) = \delta_{k,n}$.
    \end{enumerate}
\end{proof}

\begin{lemma} \label{lem:coder_difference}
    Let $g\in\Lie(Y)$ be a homogeneous weight $p$ element and set $f = \sec(g)$.
    \begin{enumerate}[label=(\alph*), leftmargin=*]
        \item \label{fi_LibY} For $i \in \{0, \dots, p\}$, we have
        \[
            f_i \in \Lie(Y) \text{ and } \overline{f}_i \in \Lie(Y).
        \]
    \end{enumerate}
    Moreover, if $S_X(f) = -f$, then
    \begin{enumerate}[label=(\alph*), leftmargin=*]
        \setcounter{enumi}{1}
        \item \label{f_i0} For $i \in \{0, \dots, p\}$, we have
        \[
            f_{i,0} = \big(1+(-1)^p\big)(-1)^{p-i} \binom{p-1}{i} (g | y_p) y_{p-i}.
        \]
        \item \label{coder_difference} For $n \in \mathbb{Z}_{\geq 1}$, we have
        \[
            \Delta_Y \circ D^Y_f(y_n) - (D_f^Y \otimes \id + \id \otimes D_f^Y) \circ \Delta_Y (y_n) = \sum_{i=0}^p f_{i,0} \otimes y_{i+n} + y_{i+n} \otimes f_{i,0}. 
        \]
    \end{enumerate}
\end{lemma}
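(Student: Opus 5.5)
For \ref{fi_LibY}, I would compute the coefficients $f_i$ of $f=\sec(g)$ directly. By definition, $f=\sum_{i\ge0}\frac{(-1)^i}{i!}\gamma_0^i(g)\,x_0^i$, and since $\gamma_0^i(g)\in\QY$, uniqueness of the decomposition $\QX=\bigoplus_i \QY x_0^i$ gives
\[
f_i=\tfrac{(-1)^i}{i!}\gamma_0^i(g).
\]
Corollary \ref{cor:partial0_preserves_LibY}, applied $i$ times, then yields $f_i\in\Lie(Y)$. For $\overline f_i=(-1)^pR_Y(f_i)$ I would use that $R_Y$ preserves $\Lie(Y)$: by \eqref{eq:RY_DeltaY} it maps primitives to primitives. (Alternatively, on a Lie monomial of degree $r$ in the $y$'s, $R_Y$ acts by $(-1)^{r-1}$.) Hence $\overline f_i\in\Lie(Y)$.

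For \ref{f_i0}, the key point is that $R_Y$ acts on $\Lie(Y)$ as $-S_Y$, where $S_Y$ is the antipode of $(\QY,\Delta_Y)$. The antipode acts by $-1$ on primitives, so $R_Y(w)=-w$ for $w\in[\Lie(Y),\Lie(Y)]$ of Lie-degree $\ge2$, while $R_Y(y_k)=y_k$. I would therefore split $f_i=(f_i|y_{p-i})y_{p-i}+c_i$ with $c_i\in[\Lie(Y),\Lie(Y)]$; this uses Lemma \ref{lem:gamma_n} and homogeneity, as in its proof.

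The bracket part satisfies $\overline{c}_i=(-1)^{p+1}c_i$, so it cancels in $f_{i,0}=f_i+\overline f_i$ unless $p$ is even. This is where the hypothesis $S_X(f)=-f$ has to be used. Since $S_X$ acts on a weight-$p$ word by reversal times $(-1)^p$, I would compare $f$ with $S_X(f)$ after applying $\pi_Y$-type projections, to show the bracket contributions cancel in general. The natural first attempt is to express $S_X(f)=-f$ as a relation between the $\overline f_i$ and the coefficients of $f$ reversed. This step is the main obstacle and I expect it to need the most care.

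Granting the cancellation, the linear part gives $(1+(-1)^p)(f_i|y_{p-i})y_{p-i}$. By Lemma \ref{lem:gamma_n}\ref{gamma_n:w}, $(f_i|y_{p-i})=\frac{(-1)^i}{i!}\gamma_{p-i}(\gamma_0^i g)$. Since $\gamma_0(y_n)=(n-1)y_{n-1}$, we get $\gamma_{p-i}\gamma_0^i(g)=(g|y_p)(p-1)(p-2)\cdots(p-i)$. Dividing by $i!$ gives the factor $\binom{p-1}{i}$, and I would recheck that the overall sign matches $(-1)^{p-i}$ up to the factor $(1+(-1)^p)$, which is nonzero only when $p$ is even.

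For \ref{coder_difference}, I would use Lemma \ref{lem:DYf_yn}: $D_f^Y(y_n)=\sum_i (f_iy_{i+n}+y_{i+n}\overline f_i)$. Because $\Delta_Y$ is an algebra morphism and $f_i,\overline f_i,y_{i+n}$ are primitive by \ref{fi_LibY}, expanding $\Delta_Y(f_iy_{i+n})$ gives $f_iy_{i+n}\otimes1+f_i\otimes y_{i+n}+y_{i+n}\otimes f_i+1\otimes f_iy_{i+n}$, and similarly for $\Delta_Y(y_{i+n}\overline f_i)$. On the other side, $(D_f^Y\otimes\id+\id\otimes D_f^Y)(y_n\otimes1+1\otimes y_n)$ removes exactly the terms $D_f^Y(y_n)\otimes1+1\otimes D_f^Y(y_n)$; here I use $D_f^Y(1)=0$, which holds since $D_f^Y$ is a derivation. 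What remains is
\[
\sum_i \bigl(f_i\otimes y_{i+n}+y_{i+n}\otimes f_i+y_{i+n}\otimes\overline f_i+\overline f_i\otimes y_{i+n}\bigr)=\sum_i \bigl(f_{i,0}\otimes y_{i+n}+y_{i+n}\otimes f_{i,0}\bigr),
\]
which is the claimed identity.
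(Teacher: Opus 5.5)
Your arguments for \ref{fi_LibY} and \ref{coder_difference} are correct and are the paper's. Part \ref{f_i0}, however, has a genuine gap. The vanishing of the non-linear part of $f_{i,0}=f_i+\overline{f}_i$ is the whole content of the claim, and you only grant it.

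The parity bookkeeping around that step is also wrong: $R_Y$ does not act by $-1$ on $[\Lie(Y),\Lie(Y)]$. On the depth-$r$ part of a primitive element one has $R_Y=(-1)^rS_Y=(-1)^{r-1}\id$, as you correctly note in part \ref{fi_LibY}. So $R_Y=-S_Y$ holds only in odd depth. Consequently $c_i+\overline{c}_i=\sum_{r\geq2}\bigl(1+(-1)^{p+r+1}\bigr)c_i^{(r)}$ has surviving components for either parity of $p$. (Even with your formula $\overline{c}_i=(-1)^{p+1}c_i$, the cancellation would occur for $p$ even, not odd.)

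The hypothesis $S_X(f)=-f$ is indispensable here. Take $g=[y_1,y_2]$, so $p=3$. Then $\gamma_0(g)=0$, hence $f=f_0=g$ and $\overline{f}_0=g$, so $f_{0,0}=2[y_1,y_2]\neq0$. The right-hand side of \ref{f_i0} is $0$, and indeed $S_X(f)\neq -f$ for this $f$. So no argument using only the structure of $\Lie(Y)$ can work. ``Comparing $f$ with $S_X(f)$ after projections'' does not yet give a mechanism for bringing the hypothesis in.

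The missing idea is to feed the hypothesis in through Lemma~\ref{lem:DYf_yn} at $n=1$. Since $D_f(x_1)=[x_1,f]+[f,x_1]=0$, you get
\[
0=D^Y_f(y_1)=\sum_{k=0}^p f_{k,k+1}=\sum_{k=0}^p\bigl(f_ky_{k+1}+y_{k+1}\overline{f}_k\bigr).
\]
Now apply the derivation $\gamma_{i+1}$. It sends $y_{k+1}$ to $\delta_{i,k}$. By part \ref{fi_LibY} and Lemma~\ref{lem:gamma_n}\ref{gamma_n:w}, it sends $f_k$ and $\overline{f}_k$ to their coefficients of $y_{i+1}$. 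This gives
\[
0=f_{i,0}+\sum_{k=0}^p\bigl(f_k+\overline{f}_k\mid y_{i+1}\bigr)\,y_{k+1},
\]
and by weight only $k=p-i-1$ contributes. Thus $f_{i,0}$ is automatically a multiple of the single letter $y_{p-i}$, with no bracket part to worry about. Your coefficient computation, applied to $f_{p-i-1}$ together with $(\overline{f}_k\mid y_{i+1})=(-1)^p(f_k\mid y_{i+1})$, then produces exactly $\bigl(1+(-1)^p\bigr)(-1)^{p-i}\binom{p-1}{i}(g\mid y_p)\,y_{p-i}$.
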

\begin{proof}
    \begin{enumerate}[label=(\alph*), leftmargin=*]
        \item Let $i \in \{0, \dots, p\}$. Since $f = \sec(g)$, it follows that
        \[
            f_i = \frac{(-1)^i}{i!} \gamma_0^i(g).
        \]
        Since $g \in \Lie(Y)$, by Corollary \ref{cor:partial0_preserves_LibY}, it follows that $f_i \in \Lie(Y)$. Moreover, since $\overline{f}_i = (-1)^p R_Y(f_i)$ and since $R_Y$ satisfies identity \eqref{eq:RY_DeltaY}, it follows that $\overline{f}_i \in \Lie(Y)$.
        \item Let $i \in \{0, \dots, p\}$. Thanks to \ref{fi_LibY}, we have for any $k \in \{0, \dots, p\}$ that $f_k, \overline{f}_k \in \Lie(Y)$. Therefore, we get with Lemma  \ref{lem:gamma_n}\ref{gamma_n:w}
        \begin{align}
            \gamma_{i+1}\left(\sum_{k=0}^p f_{k,k+1}\right) & = \sum_{k=0}^p \left(\gamma_{i+1}(f_k) y_{k+1} + f_k \gamma_{i+1}(y_{k+1}) + \gamma_{i+1}(y_{k+1}) \overline{f}_k + y_{k+1} \gamma_{i+1}(\overline{f}_k)\right) \notag \\
            & = f_{i,0} + \sum_{k=0}^p (f_k + \overline{f}_k | y_{i+1}) y_{k+1} \label{partial_identity}  
        \end{align}
        Recall from the proof of \cite[Lemma A.4]{Fu11} that
        \[
            \sum_{k=0}^p (f_k + \overline{f}_k | y_{i+1}) y_{k+1} = \big(1+(-1)^p\big)(-1)^{p-i-1} \binom{p-1}{i} (g | y_p) y_{p-i}, 
        \]
        using the fact that, for any $k \in \{0, \dots, p\}$, $f_k$ is homogeneous of weight $p-k$, in addition to the identities $(\overline{f}_k | y_{i+1}) = (-1)^p (f_k | y_{i+1})$, and $f_{p-i-1} = \frac{(-1)^{p-i-1}}{(p-i-1)!} \gamma_0^{p-i-1}(g)$. \newline
        As $D_f^Y(y_1)=0$, we obtain from Lemma \ref{lem:DYf_yn} that
        \[
            \sum_{k=0}^p f_{k,k+1} = 0.
        \]
        Therefore, the left hand side of \eqref{partial_identity} is equal to zero, thus proving the claim.
        \item Since $S_X(f) = -f$, it follows from Lemma \ref{lem:DYf_yn} that $\displaystyle D_f^Y(y_n) = \sum_{i=0}^p f_{i,i+n}$. Therefore,
        \begin{align*}
            \Delta_Y \circ D^Y_f(y_n) & = \sum_{i=0}^p \Delta_Y(f_{i,i+n}) = \sum_{i=0}^p \Delta_Y(f_i) \Delta_Y(y_{i+n}) + \Delta_Y(y_{i+n}) \Delta_Y(\overline{f}_i) \\
            & = \mbox{\small$\displaystyle\sum_{i=0}^p (f_i \otimes 1 + 1 \otimes f_i) (y_{i+n} \otimes 1 + 1 \otimes y_{i+n}) + (y_{i+n} \otimes 1 + 1 \otimes y_{i+n}) (\overline{f}_i \otimes 1 + 1 \otimes \overline{f}_i)$} \\
            & = \sum_{i=0}^p (f_{i,i+n} \otimes 1 + 1 \otimes f_{i,i+n} + f_{i,0} \otimes y_{i+n} + y_{i+n} \otimes f_{i,0}),
        \end{align*}
        where the third equality follows from \ref{fi_LibY}. On the other hand, we have
        \begin{align*}
            (D_f^Y \otimes \id + \id \otimes D_f^Y) \circ \Delta_Y (y_n) & = (D_f^Y \otimes \id + \id \otimes D_f^Y)(y_n \otimes 1 + 1 \otimes y_n) \\
            & = D_f^Y(y_n) \otimes 1 + 1 \otimes D_f^Y(y_n) \\
            & = \sum_{i=0}^p f_{i,i+n} \otimes 1 + 1 \otimes f_{i,i+n}. 
        \end{align*}
        The difference $\Delta_Y \circ D^Y_f(y_n) - (D_f^Y \otimes \id + \id \otimes D_f^Y) \circ \Delta_Y (y_n)$ is then equal to the above formula.
    \end{enumerate}
\end{proof}

\begin{proof}[Proof of Proposition \ref{prop:ls_subset_stab}]
    Let $\psi \in \mathfrak{ls}$ be a homogeneous weight $p$ element. By definition we have that
    \[
        \mathfrak{ls} \subset \bigoplus_{m \geq 2} \Lie(X)[m] \subset \ker(\gamma_0).
    \]
    This implies by Lemma \ref{lem:sec_piY} that $\psi = \sec(\pi_Y(\psi))$. By definition of $\mathfrak{ls}$, we have $\pi_Y(\psi) \in \Lie(Y)$ and $\psi \in \Lie(X)$, so $S_X(\psi)=-\psi$. Therefore, assumptions of Lemma \ref{lem:coder_difference} are satisfied.
    Applying Lemma \ref{lem:coder_difference} to $g=\pi_Y(\psi)$ and $f=\psi$, it follows from \ref{f_i0} that
    \[
        \psi_{i,0} = \big(1+(-1)^p\big)(-1)^{p-i} \binom{p-1}{i} (\pi_Y(\psi) | y_p) y_{p-i} = 0,
    \]
    for any $i \in \{0, \dots, p\}$. This follows from the fact that in $\mathfrak{ls}$ we have
    \[
        (\pi_Y(\psi) | y_p) = (\psi | x_0^{p-1} x_1) = 0,
    \]
    for even $p$. For odd $p$, $\psi_{i,0}$ is always zero. Therefore, by Lemma \ref{lem:coder_difference} \ref{coder_difference}, it follows that
    \[
        \Delta_Y \circ D^Y_\psi(y_n) = (D_\psi^Y \otimes \id + \id \otimes D_\psi^Y) \circ \Delta_Y (y_n).
    \]
    Finally, since $\pi_Y(\psi) \in \Lie(Y)$, the right multiplication by $\pi_Y(\psi)$ is a coderivation of $(\QY, \Delta_Y)$. As $s_\psi^Y = D_\psi^Y + r_{\pi_Y(\psi)}$, it follows that $s_\psi^Y$ is a coderivation of $(\QY, \Delta_Y)$, thus proving the desired result. 
\end{proof}

\noindent Next, we aim to prove a somewhat converse with respect to the inclusion in Proposition \ref{prop:ls_subset_stab}. That is, we have the following result.
\begin{lemma} \label{lem:stab_subset_libY} 
    We have the following equality of subspaces of $\Lie(X)$
    \[
        \stab_{\Lie(X)}(\Delta_Y) \subset \Big(\Lie(X) \cap \pi_Y^{-1}(\Lie(Y))\Big) := \{ \psi \in \Lie(X) \mid \pi_Y(\psi) \in \Lie(Y) \}.
    \]
\end{lemma}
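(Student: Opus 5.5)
Let $\psi\in\stab_{\Lie(X)}(\Delta_Y)$; the goal is to show $g:=\pi_Y(\psi)$ is primitive for $\Delta_Y$. The idea is to evaluate the coderivation identity of Definition \ref{def:stab_Delta_Y} on the unit $1\in\QY$. Since $\Delta_Y(1)=1\otimes 1$, the identity gives
\[
\Delta_Y\big(s^Y_\psi(1)\big) = s^Y_\psi(1)\otimes 1 + 1\otimes s^Y_\psi(1).
\]
So $s^Y_\psi(1)$ is primitive, i.e.\ lies in $\Lie(Y)$. It therefore suffices to identify $s^Y_\psi(1)$ with $\pi_Y(\psi)$.

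For this, I use the commutative square defining $s^Y_\psi$, together with $1=\pi_Y(1)$ (the empty word lies in $\QY$). This gives $s^Y_\psi(1)=\pi_Y(s_\psi(1))$. Now $s_\psi(1)=\ell_\psi(1)+d_\psi(1)=\psi+0=\psi$, because $d_\psi$ is a derivation and hence kills $1$. Therefore
\[
s^Y_\psi(1)=\pi_Y(\psi)=g,
\]
and so $g\in\Lie(Y)$. Since $\psi\in\Lie(X)$ by assumption, $\psi\in\Lie(X)\cap\pi_Y^{-1}(\Lie(Y))$, which proves the inclusion.

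The only point needing care is the claim $s^Y_\psi(1)=\pi_Y(\psi)$. This relies on $s^Y_\psi$ being well defined, which holds because $s_\psi$ preserves $\QX x_0$, as noted before Proposition \ref{prop:Lib_act_QY}. It also relies on $\pi_Y$ restricting to the identity on $\QY\subset\QX$, which is clear from its definition. The lone generators need no special treatment: $\psi=x_0$ gives $g=0$, and $\psi=x_1$ gives $g=y_1$, both in $\Lie(Y)$, consistent with the argument. I expect no real obstacle beyond this bookkeeping.
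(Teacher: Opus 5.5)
Your proposal is correct and follows the paper's proof exactly: evaluate the coderivation identity on $1$, using $\Delta_Y(1)=1\otimes 1$, to see that $s^Y_\psi(1)$ is $\Delta_Y$-primitive, and then identify $s^Y_\psi(1)=\pi_Y(\psi)$. Your derivation of that identity via the commutative square and $d_\psi(1)=0$ spells out the step the paper leaves as ``one checks''.
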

\begin{proof}
    Let $\psi \in \stab_{\Lie(X)}(\Delta_Y)$. In particular, it follows that
    \begin{equation} \label{eq:stab_to_1}
        (s^Y_{\psi} \otimes \id + \id \otimes s^Y_{\psi}) \circ \Delta_Y(1) = \Delta_Y \circ s^Y_\psi(1).
    \end{equation}
    Moreover, one checks that $s^Y_\psi(1) = \pi_Y(\psi)$. Therefore, equality \eqref{eq:stab_to_1} implies that
    \[
        \pi_Y(\psi) \otimes 1 + 1 \otimes \pi_Y(\psi) = \Delta_Y(\pi_Y(\psi)), 
    \]
    thus proving that $\pi_Y(\psi) \in \Lie(Y)$.
\end{proof}

\begin{theorem} \label{thm:stab_equals_ls}
    We have the following equality of bigraded subspaces of $\Lie(X)$
    \[
        \stab_{\Lie(X)}(\Delta_Y) = \mathfrak{ls} \oplus \Q x_0 \oplus \Q x_1.
    \]
\end{theorem}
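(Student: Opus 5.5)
We prove the two inclusions separately. The easy part is ``$\supseteq$''. By Proposition \ref{prop:ls_subset_stab} we already have $\ls\subset\stab_{\Lie(X)}(\Delta_Y)$, so it remains to check that $x_0$ and $x_1$ lie in the stabilizer.

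For $x_1$ we have $d_{x_1}(x_1)=0$ and $d_{x_1}(x_0)=0$, so $s_{x_1}=\ell_{x_1}$. Hence $s^Y_{x_1}$ is left multiplication by $y_1$. Since $y_1$ is $\Delta_Y$-primitive, left multiplication by a primitive element is a coderivation, and so $x_1$ lies in the stabilizer.

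For $x_0$, the map $s_{x_0}$ is $\ell_{x_0}+d_{x_0}$ with $d_{x_0}(x_1)=[x_1,x_0]$. We compute $s_{x_0}(w)$ for $w=x_0^{k_1-1}x_1\cdots$ modulo $\QX x_0$. The operator $\ell_{x_0}+d_{x_0}$ equals $-(\text{right multiplication by }x_0)+(\text{the derivation }x_0\mapsto 0,\ x_1\mapsto x_0x_1)$ plus a correction. Using $d_{x_0}=[-,x_0]$ up to the identity from Lemma \ref{lem:Ihara_with_x0_x1} (namely $d_{x_0}(\psi)=[\psi,x_0]$ on $\Lie(X)$, and both sides are derivations, so this holds on all of $\QX$), we get $s_{x_0}(w)=x_0w+wx_0-x_0w=wx_0$. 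This lies in $\QX x_0$, so $s^Y_{x_0}=0$, which is trivially a coderivation. Bigradedness of the sum is clear, and the sum is direct because $\ls$ lives in weight $\geq 2$ by condition (i).

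For ``$\subseteq$'' we let $\psi\in\stab_{\Lie(X)}(\Delta_Y)$. The stabilizer is bigraded, since the action preserves bidegree, so we may take $\psi$ homogeneous of weight $p$. If $p=1$ then $\psi\in\Q x_0\oplus\Q x_1$. If $p\geq2$, conditions (i) and (ii) of Definition \ref{def:ls} hold automatically, and condition (iii) holds by Lemma \ref{lem:stab_subset_libY}. It remains to prove condition (iv), i.e.\ $(\psi\mid x_0^{p-1}x_1)=0$ when $p$ is even.

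Since $\psi\in\Lie(X)$ of weight $\geq 2$ lies in $\ker(\gamma_0)$, Lemma \ref{lem:sec_piY} gives $\psi=\sec(g)$ with $g=\pi_Y(\psi)\in\Lie(Y)$, and $S_X(\psi)=-\psi$. So Lemma \ref{lem:coder_difference} applies. Because $r_g$ is a coderivation, the stabilizer condition says that $D^Y_\psi$ is a coderivation. Lemma \ref{lem:coder_difference}\ref{coder_difference} then gives
\[
\sum_{i=0}^p \psi_{i,0}\otimes y_{i+n}+y_{i+n}\otimes\psi_{i,0}=0\qquad\text{for all } n\geq1.
\]
Taking $n=1$ and inserting \ref{f_i0} with $p$ even, the coefficient of $y_{p}\otimes y_{1}$ is a nonzero multiple of $(g\mid y_p)$, coming from $i=0$ in the first sum and $i=p-1$ in the second. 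We must check that no other term contributes to $y_p\otimes y_1$. In the first sum, $\psi_{i,0}\propto y_{p-i}$ is tensored with $y_{i+1}$, so contributions require $p-i=p$ and $i+1=1$, i.e.\ $i=0$, giving $2\cdot(g\mid y_p)\,y_p\otimes y_1$. In the second sum we need $i+1=p$ and $p-i=1$, i.e.\ $i=p-1$, giving $2(-1)\binom{p-1}{p-1}(g\mid y_p)\,y_p\otimes y_1$. These two contributions cancel. This is the main obstacle, and the fix is to choose a different monomial where cancellation cannot occur.

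The coefficient of $y_{p-i}\otimes y_{i+1}$ collects the first-sum term with index $i$ and the second-sum term with index $j=p-i-1$. Their total is $2(g\mid y_p)\bigl[(-1)^{p-i}\binom{p-1}{i}+(-1)^{i+1}\binom{p-1}{p-1-i}\bigr]=2(g\mid y_p)\binom{p-1}{i}\bigl[(-1)^i-(-1)^i\bigr]$, which vanishes for every $i$ when $n=1$. We therefore use $n=2$ instead. The monomial $y_{p-i}\otimes y_{i+2}$ receives the first-sum term with index $i$ and the second-sum term with index $j$ satisfying $j+2=p-i$ and $p-j=i+2$, i.e.\ $j=p-i-2$, whose coefficient is $(-1)^{i+2}\binom{p-1}{p-i-2}=(-1)^i\binom{p-1}{i+1}$. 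The total is $2(g\mid y_p)(-1)^i\bigl[\binom{p-1}{i}+\binom{p-1}{i+1}\bigr]=2(-1)^i\binom{p}{i+1}(g\mid y_p)$, which is nonzero whenever $(g\mid y_p)\neq0$. (If $p-i=i+2$ the two terms coincide as monomials, and the computation above still applies.) Hence $(g\mid y_p)=(\psi\mid x_0^{p-1}x_1)=0$, which is condition (iv), so $\psi\in\ls$.
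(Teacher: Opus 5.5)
Your proof is correct. It has the same skeleton as the paper's: Proposition \ref{prop:ls_subset_stab} and Lemma \ref{lem:stab_subset_libY} reduce everything to weight one and to condition (iv) in even weight. You handle both of these delicate points differently, though.

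In weight one, the paper checks $x_0$ through $s^Y_{x_0}=D^Y_{x_0}$ on generators, and $x_1$ through Lemma \ref{lem:coder_difference} with $f=x_1$. Your observations are shorter: $s_{x_0}$ is right multiplication by $x_0$, so $s^Y_{x_0}=0$; and $d_{x_1}=0$, so $s^Y_{x_1}=\ell_{y_1}$.

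For the even-weight obstruction, the paper splits by depth, uses $\Lie(X)[m,1]=\Q\,\ad_{x_0}^{m-1}(x_1)$, computes $s^Y_{\ad_{x_0}^{m-1}(x_1)}(y_2)$ by hand and invokes Lemma \ref{lem:nonvanishing_sum}. You instead read parts \ref{f_i0} and \ref{coder_difference} of Lemma \ref{lem:coder_difference} as a necessary condition and evaluate at $n=2$. This gives the coefficient $2(-1)^i\binom{p}{i+1}(\psi\mid x_0^{p-1}x_1)$ of $y_{p-i}\otimes y_{i+2}$, which is exactly the tensor of Lemma \ref{lem:nonvanishing_sum}. The two computations agree, as they must, because $r_{\pi_Y(\psi)}$ is a coderivation.

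Your route yields a uniform criterion with no depth decomposition and no explicit word computation. A weight-$p\geq 2$ element $\psi$ of $\Lie(X)\cap\pi_Y^{-1}(\Lie(Y))$ lies in the stabilizer if and only if $p$ is odd or $(\psi\mid x_0^{p-1}x_1)=0$. It also recycles precisely the lemma already used for Proposition \ref{prop:ls_subset_stab}. The paper's hand computation is more elementary and self-contained, but it is specific to the one-dimensional space $\Lie(X)[m,1]$.

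For a clean write-up, drop the garbled sentence about $\ell_{x_0}+d_{x_0}$ ``plus a correction''. Also drop the $n=1$ false start: it is vacuous anyway, because $D^Y_\psi(y_1)=0=D^Y_\psi(1)$ makes the coderivation identity at $y_1$ automatic.
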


In the proof of Theorem \ref{thm:stab_equals_ls}, we will use the following technical lemma.
\begin{lemma} \label{lem:nonvanishing_sum}
    For $m \in \mathbb{Z}_{\geq 1}$, the sum
    \[
        \sum_{k=0}^{m-1} (-1)^{k} \binom{m-1}{k} \left(y_{m-k} \otimes y_{k+2} + y_{k+2} \otimes y_{m-k} - y_{m-k+1} \otimes y_{k+1} - y_{k+1} \otimes y_{m-k+1} \right)
    \]
    vanishes if and only if $m$ is odd.
\end{lemma}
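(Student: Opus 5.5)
The plan is to compute the coefficient of each basis tensor $y_a \otimes y_b$ in the sum directly. The tensors $y_a\otimes y_b$ with $a,b\geq 1$ are linearly independent in $\QY^{\otimes 2}$, so the sum vanishes if and only if every such coefficient vanishes. Every term in the sum has $a+b=m+2$, so I will parametrize by $b\in\{1,\dots,m+1\}$ with $a=m+2-b$. I will use the convention $\binom{m-1}{j}=0$ for $j<0$ or $j>m-1$, so that the summation range over $k$ can be dropped.

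I will reindex each of the four families of terms separately.
\begin{itemize}
\item In $y_{m-k}\otimes y_{k+2}$, set $b=k+2$; the coefficient is $(-1)^b\binom{m-1}{b-2}$.
\item In $y_{k+2}\otimes y_{m-k}$, set $b=m-k$; using $\binom{m-1}{m-b}=\binom{m-1}{b-1}$, the coefficient is $(-1)^{m-b}\binom{m-1}{b-1}$.
\item In $-y_{m-k+1}\otimes y_{k+1}$, set $b=k+1$; the coefficient is $(-1)^{b}\binom{m-1}{b-1}$.
\item In $-y_{k+1}\otimes y_{m-k+1}$, set $b=m+1-k$; using $\binom{m-1}{m+1-b}=\binom{m-1}{b-2}$, the coefficient is $(-1)^{m-b}\binom{m-1}{b-2}$.
\end{itemize}

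Adding these and applying Pascal's rule $\binom{m-1}{b-2}+\binom{m-1}{b-1}=\binom{m}{b-1}$, the coefficient of $y_{m+2-b}\otimes y_b$ should be
\[
c_b=\big((-1)^b+(-1)^{m-b}\big)\binom{m}{b-1}=(-1)^b\big(1+(-1)^m\big)\binom{m}{b-1}.
\]

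If $m$ is odd, every $c_b$ vanishes, so the sum is zero. If $m$ is even, take $b=1$: then $c_1=-2\binom{m}{0}=-2\neq 0$. So the coefficient of $y_{m+1}\otimes y_1$ is nonzero and the sum does not vanish.

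The only real obstacle is the index bookkeeping: keeping the signs straight when substituting $k$ in terms of $b$, and checking the boundary values $b=1$ and $b=m+1$. At the boundary some binomials are zero, and the convention above handles this automatically. As a sanity check I will verify $m=1$ (the sum is zero) and $m=2$ (the coefficient of $y_3\otimes y_1$ is $-2$) by hand.
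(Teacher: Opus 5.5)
Your proof is correct and is essentially the paper's argument. The paper also uses Pascal's rule to merge the four families into $-\sum_{k=0}^{m}\big((-1)^{m-k}+(-1)^k\big)\binom{m}{k}\, y_{k+1}\otimes y_{m-k+1}$, which is exactly your coefficient $c_b$ after substituting $b=m+1-k$, and then concludes from the linear independence of the tensors $y_{k+1}\otimes y_{m-k+1}$; your explicit check that $c_1=-2\neq 0$ for even $m$ is a concrete instance of that freeness argument.
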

\begin{proof}
    First, one checks through direct computation that
    \begin{align*}
        & \sum_{k=0}^{m-1} (-1)^{k} \binom{m-1}{k} \left(y_{m-k} \otimes y_{k+2} + y_{k+2} \otimes y_{m-k} - y_{m-k+1} \otimes y_{k+1} - y_{k+1} \otimes y_{m-k+1} \right) \\
        & = - \sum_{k=0}^{m} (-1)^{k} \binom{m}{k} (y_{m-k+1} \otimes y_{k+1} + y_{k+1} \otimes y_{m-k+1}) \\
        & = - \sum_{k=0}^{m} (-1)^{m-k} \binom{m}{m-k} y_{k+1} \otimes y_{m-k+1} - \sum_{k=0}^{m} (-1)^{k} \binom{m}{k} y_{k+1} \otimes y_{m-k+1} \\
        & = - \sum_{k=0}^{m} \left((-1)^{m-k} + (-1)^k\right) \binom{m}{k} y_{k+1} \otimes y_{m-k+1}
    \end{align*}
    If $m$ is odd, then for any $k \in \{0, \dots, m\}$, $(-1)^{m-k} + (-1)^k = 0$. Thus making the sum vanish. Otherwise, if $m$ is even, then for any $k \in \{0, \dots, m\}$, $(-1)^{m-k} + (-1)^k = 2 (-1)^k$, thus the previous sum is equal to
    \[
        2 \sum_{k=0}^{m}  (-1)^{k-1} \binom{m}{k} y_{k+1} \otimes y_{m-k+1},
    \]
    which is not zero thanks to the freeness of the family $\left(y_{k+1} \otimes y_{m-k+1}\right)_{k \in \{0, \dots, m\}}$.
\end{proof}

\begin{proof}[Proof of Theorem \ref{thm:stab_equals_ls}]
    Depending of the pairs $(m,n)$, we may distinguish the following cases:
    \begin{caselist}
        \item $(m,n)=(1,0)$. Let us show that
        \[
            \stab_{\Lie(X)}(\Delta_Y)[1,0] = \mathfrak{ls}[1,0] \oplus \Q x_0.
        \]
        By definition of $\mathfrak{ls}$, it is immediate that $\mathfrak{ls}[1,0] = 0$. On the other hand, the decomposition \eqref{eq:bigradecomp_LibX} 
        implies the inclusion $\stab_{\Lie(X)}(\Delta_Y)[1,0] \subset \Q x_0$. The converse inclusion follows from the fact that $s_{x_0}^Y$ is a coderivation. Indeed, one notices that $s_{x_0}^Y = D_{x_0}^Y$. Therefore, $s_{x_0}^Y$ is a derivation and it is enough to check the identity
        \[
            (s^Y_{x_0} \otimes \id + \id \otimes s^Y_{x_0}) \circ \Delta_Y = \Delta_Y \circ s^Y_{x_0}
        \]
        on the generators $y_k$ of $\QY$, which can be verified through a straightforward computation that uses the fact that $s_{x_0}^Y(y_k)=0$ for any $k \in \mathbb{Z}_{\geq 1}$.
        \item $(m,n)=(1,1)$. Let us show that
        \[
            \stab_{\Lie(X)}(\Delta_Y)[1,1] = \mathfrak{ls}[1,1] \oplus \Q x_1.
        \]
        By definition of $\mathfrak{ls}$, it is immediate that $\mathfrak{ls}[1,1] = 0$. On the other hand, the decomposition \eqref{eq:bigradecomp_LibX} implies the inclusion $\stab_{\Lie(X)}(\Delta_Y)[1,1] \subset \Q x_1$. The converse inclusion follows from the fact that $s_{x_1}^Y$ is a coderivation. Indeed, one notices that $x_1=\sec(y_1)$ and since $S_X(x_1)=-x_1$,
        the identity of Lemma \ref{lem:coder_difference} \ref{coder_difference} holds for $f=x_1$. More precisely, in this identity we have $p=1$, $f_0=y_1$ and $f_1=0$. Therefore, $f_{0,0} = 0 = f_{1,0}$, which implies the equality
        \[
            \Delta_Y \circ D^Y_f(y_n) - (D_f^Y \otimes \id + \id \otimes D_f^Y) \circ \Delta_Y (y_n) = 0. 
        \]
        Thus, we established that $D_{x_1}^Y$ is a coderivation. Moreover, $\pi_Y(x_1) = y_1$ is primitive with respect to $\Delta_Y$, so $r_{\pi_Y(x_1)}$ also satisfies the coderivation property. Consequently, $s_{x_1}^Y = D_{x_1}^Y + r_{\pi_Y(x_1)}$ is itself a coderivation.
        \item $m \geq 2$ and $n \geq 2$. Let us show that
        \[
            \stab_{\Lie(X)}(\Delta_Y)[m, n] = \mathfrak{ls}[m, n].
        \]
        Indeed, decomposition \eqref{eq:bigradecomp_LibX} and the definition of 
        $\mathfrak{ls}$ imply that the bigraded linear spaces $\Lie(X) \cap \pi_Y^{-1}(\Lie(Y))$ and $\mathfrak{ls}$ coincide in bidegree $(m,n)$.
        The equality then follows thanks to the inclusions in Lemma \ref{lem:stab_subset_libY} and Proposition \ref{prop:ls_subset_stab}.
        \item $m \geq 2$ is odd and $n = 1$. Let us show that
        \[
            \stab_{\Lie(X)}(\Delta_Y)[m, 1] = \mathfrak{ls}[m, 1].
        \]
        Indeed, decomposition \eqref{eq:bigradecomp_LibX} and the definition of 
        $\mathfrak{ls}$ imply that the bigraded linear spaces $\Lie(X) \cap \pi_Y^{-1}(\Lie(Y))$ and $\mathfrak{ls}$ coincide in bidegree $(m,1)$.
        The equality then follows thanks to the inclusions in Lemma \ref{lem:stab_subset_libY} and Proposition \ref{prop:ls_subset_stab}.
        \item $m \geq 2$ is even and $n = 1$. Let us show that
        \[
            \stab_{\Lie(X)}(\Delta_Y)[m, 1] = \mathfrak{ls}[m, 1].
        \]
        By definition of $\mathfrak{ls}$, we have that $\mathfrak{ls}[m, 1]=0$. On the other hand, recall that
        \[
            \Lie(X)[m,1] = \Q \cdot \ad_{x_0}^{m-1}(x_1)
        \]
        and since $\pi_Y(\ad_{x_0}^{m-1}(x_1)) = y_m$ is $\Delta_Y$-primitive, it follows that the bigraded component of $\Lie(X) \cap \pi_Y^{-1}(\Lie(Y))$ of bidegree $(m,1)$ is equal to $\Q \cdot \ad_{x_0}^{m-1}(x_1)$, which contains $\stab_{\Lie(X)}(\Delta_Y)[m,1]$. One checks that
        \[
            s_{\ad_{x_0}^{m-1}(x_1)}^Y(1) = y_m \text{ and } s_{\ad_{x_0}^{m-1}(x_1)}^Y(y_2) = y_2 y_m + \sum_{k=0}^{m-1} (-1)^k \binom{m-1}{k} (y_{m-k} y_{k+2} - y_{m-k+1} y_{k+1}),
        \]
        where the last equality is obtained thanks to the identity
        \[
            \ad_{x_0}^{m-1}(x_1) = \sum_{k=0}^{m-1} (-1)^k \binom{m-1}{k} x_0^{m-k-1} x_1 x_0^k.
        \]
        Therefore,
        \begin{align}
            & \Delta_Y \circ s_{\ad_{x_0}^{m-1}(x_1)}^Y(y_2) = \Delta_Y\left(y_2 y_m + \sum_{k=0}^{m-1} (-1)^k \binom{m-1}{k} (y_{m-k} y_{k+2} - y_{m-k+1} y_{k+1})\right) \label{DeltaYsYy2} \\
            & = \left( y_{2} y_{m} + \sum_{k=0}^{m-1} (-1)^{k} \binom{m-1}{k} (y_{m-k} y_{k+2} - y_{m-k+1} y_{k+1}) \right) \otimes 1 \notag \\
            & + 1 \otimes \left( y_{2} y_{m} + \sum_{k=0}^{m-1} (-1)^{k} \binom{m-1}{k} (y_{m-k} y_{k+2} - y_{m-k+1} y_{k+1}) \right) + y_{2} \otimes y_{m} + y_{m} \otimes y_{2} \notag \\
            & + \sum_{k=0}^{m-1} (-1)^{k} \binom{m-1}{k} \left(y_{m-k} \otimes y_{k+2} + y_{k+2} \otimes y_{m-k} - y_{m-k+1} \otimes y_{k+1} - y_{k+1} \otimes y_{m-k+1} \right) \notag
        \end{align}
        and
        \begin{align}
            & \left(s_{\ad_{x_0}^{m-1}(x_1)}^Y \otimes \id + \id \otimes s_{\ad_{x_0}^{m-1}(x_1)}^Y\right) \circ \Delta_Y(y_2) \label{sYDeltaYy2} \\
            & = s_{\ad_{x_0}^{m-1}(x_1)}^Y(y_2) \otimes 1 + s_{\ad_{x_0}^{m-1}(x_1)}^Y(1) \otimes y_2 + y_2 \otimes s_{\ad_{x_0}^{m-1}(x_1)}^Y(1) + 1 \otimes s_{\ad_{x_0}^{m-1}(x_1)}^Y(y_2) \notag \\
            & = \left( y_{2} y_{m} + \sum_{k=0}^{m-1} (-1)^{k} \binom{m-1}{k} (y_{m-k} y_{k+2} - y_{m-k+1} y_{k+1}) \right) \otimes 1 + y_{m} \otimes y_{2} + y_{2} \otimes y_{m} \notag \\
            & + 1 \otimes \left( y_{2} y_{m} + \sum_{k=0}^{m-1} (-1)^{k} \binom{m-1}{k} (y_{m-k} y_{k+2} - y_{m-k+1} y_{k+1}) \right) \notag
        \end{align}
        Since $m$ is even, thanks to Lemma \ref{lem:nonvanishing_sum}, the last row of identity \eqref{DeltaYsYy2} does not vanish. This implies that the identities \eqref{DeltaYsYy2} and \eqref{sYDeltaYy2} are not equal. Therefore, $s_{\ad_{x_0}^{m-1}(x_1)}^Y$ is not a coderivation. Hence, $s_{\ad_{x_0}^{m-1}(x_1)} \notin \stab_{\Lie(X)}(\Delta_Y)[m, 1]$, implying that $\stab_{\Lie(X)}(\Delta_Y)[m, 1] = 0$.
    \end{caselist}
    Finally, since the bigraded subspaces $\stab_{\Lie(X)}(\Delta_Y)$ and $\mathfrak{ls} \oplus \Q x_0 \oplus \Q x_1$ of $\Lie(X)$ are equal on each bigraded component, the result follows.
\end{proof}




\begin{proof}[Proof of Theorem \ref{thm:ls_Lie}]
By Lemma \ref{lem:Ihara_with_x0_x1}, the space 
\[\stab_{\Lie(X)}(\Delta_Y)[1]=\stab_{\Lie(X)}(\Delta_Y)[1,0]\oplus \stab_{\Lie(X)}(\Delta_Y)[1,1]=\Q x_0 \oplus \Q x_1\] is contained in the center of the $\Q$-Lie algebra $(\Lie(X), \{-,-\})$. Therefore, the projection to the weight $1$ component
\begin{align} \label{eq:proj_stab(Delta_Y)_to_weight1}
\stab_{\Lie(X)}(\Delta_Y) \to \stab_{\Lie(X)}(\Delta_Y)[1]
\end{align}
is a $\Q$-Lie algebra morphism, where $\stab_{\Lie(X)}(\Delta_Y)[1]$ is an abelian $\Q$-Lie algebra. It follows that the kernel of the morphism in \eqref{eq:proj_stab(Delta_Y)_to_weight1} is an ideal of the $\Q$-Lie algebra $(\stab_{\Lie(X)}(\Delta_Y), \{-,-\})$. On the other hand, thanks to Theorem \ref{thm:stab_equals_ls}, we have that
\[
\stab_{\Lie(X)}(\Delta_Y) = \mathfrak{ls} \oplus \Q x_0 \oplus \Q x_1.
\]
This implies that
\[
\ker\left(\stab_{\Lie(X)}(\Delta_Y) \to \stab_{\Lie(X)}(\Delta_Y)[1]\right) = \mathfrak{ls}, 
\]
thus proving the result.
\end{proof}

\begin{corollary} We have the following inclusion of Lie subalgebras of $(\Lie(X),\{-,-\})$
\[
\{\stab_{\Lie(X)}(\Delta_Y),\stab_{\Lie(X)}(\Delta_Y)\}\subset \ls.
\]
\end{corollary}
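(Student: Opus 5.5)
We take arbitrary $\psi_1,\psi_2\in\stab_{\Lie(X)}(\Delta_Y)$ and show that $\{\psi_1,\psi_2\}\in\ls$. Since $\stab_{\Lie(X)}(\Delta_Y)$ is a Lie subalgebra of $(\Lie(X),\{-,-\})$ (Definition \ref{def:stab_Delta_Y}), we already know $\{\psi_1,\psi_2\}\in\stab_{\Lie(X)}(\Delta_Y)$. By Theorem \ref{thm:stab_equals_ls}, this stabilizer decomposes as $\ls\oplus\Q x_0\oplus\Q x_1$, where $\Q x_0\oplus\Q x_1$ is exactly the weight-$1$ component. So it suffices to prove that the weight-$1$ component of $\{\psi_1,\psi_2\}$ vanishes.

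To see this, decompose $\psi_j=\phi_j+a_jx_0+b_jx_1$ with $\phi_j\in\ls$ and $a_j,b_j\in\Q$. By bilinearity and Lemma \ref{lem:Ihara_with_x0_x1} (centrality of $x_0,x_1$ for the Ihara bracket), every term involving $x_0$ or $x_1$ vanishes. Hence
\[
\{\psi_1,\psi_2\}=\{\phi_1,\phi_2\}.
\]
By Theorem \ref{thm:ls_Lie}, $\ls$ is closed under $\{-,-\}$, so $\{\phi_1,\phi_2\}\in\ls$.

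Alternatively, one can argue via the proof of Theorem \ref{thm:ls_Lie}: the projection \eqref{eq:proj_stab(Delta_Y)_to_weight1} is a Lie algebra morphism onto an abelian Lie algebra with kernel $\ls$. Hence it kills all brackets, so the derived algebra lies in $\ls$. A direct grading argument also works: $\{-,-\}$ is additive in weight, and $\ls$ has weights at least $2$, so brackets of weight-$\geq 2$ elements have weight at least $4$ and no weight-$1$ part.

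There is no real obstacle here. The only point to check is that the Ihara bracket is weight-homogeneous, which is immediate from \eqref{eq:Ihara_bracket}, and that the centrality in Lemma \ref{lem:Ihara_with_x0_x1} holds on both sides of the bracket, which follows from antisymmetry.
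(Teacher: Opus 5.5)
Your proof is correct and follows the paper's route: Theorem \ref{thm:stab_equals_ls} gives $\stab_{\Lie(X)}(\Delta_Y)=\ls\oplus\Q x_0\oplus\Q x_1$, and the centrality of $x_0,x_1$ (Lemma \ref{lem:Ihara_with_x0_x1}) reduces every bracket to $\{\ls,\ls\}\subset\ls$, either via Theorem \ref{thm:ls_Lie} or via the kernel argument you sketch. As a side remark, your grading observation already suffices without centrality, because the Ihara bracket of any two elements of $\Lie(X)$ has weight at least $2$ and $\ls$ is exactly the weight-$\geq 2$ part of the stabilizer.
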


\begin{proof} This follows directly from Theorem \ref{thm:stab_equals_ls} together with Lemma \ref{lem:Ihara_with_x0_x1}.
\end{proof}

\section{The linearized balanced Lie algebra} \label{sec:lq}

\subsection{Algebraic setup} Let $B := \{b_0, b_1, b_2, \ldots\}$ be an alphabet and $\QB$ be the non-commutative free $\Q$-algebra generated by $B$. It is equipped with a bialgebra structure, where the coproduct $\Delta_B:\QB\to\QB\otimes\QB$ is the algebra morphism defined by
\begin{align} \label{eq:def_Delta_B}
\Delta_B(b_i)= b_i\otimes 1+1\otimes b_i, \qquad i\in \mathbb{Z}_{\geq 0}.
\end{align}
Let $\QB^0$ be the subspace of $\QB$ spanned by words which do not end in $b_0$. The direct sum decomposition $\QB=\QB^0\oplus\QB b_0$ induces a canonical surjection
\begin{align*}
\pi_0:\QB\to \QB^0.
\end{align*}
Therefore, we have a $\Q$-linear isomorphism
\begin{align} \label{eq:QB^0_iso_QB/QB b_0}
\QB^0 \simeq \QB/\QB b_0.
\end{align}
Note that $\QB^0$ is also a subalgebra of $\QB$.

Let $\Lie(B)$ be the free $\Q$-Lie algebra generated by the alphabet $B$. The algebra $\QB$ is isomorphic to the universal enveloping algebra of $\Lie(B)$. So, $\Lie(B)$ is identified with the Lie algebra of primitive elements in $\QB$ for the coproduct $\Delta_B$ from \eqref{eq:def_Delta_B}. Namely,
\[
    \Lie(B) = \{ \psi \in \QB \mid \Delta_B(\psi) = \psi \otimes 1 + 1 \otimes \psi \}.
\]
The Lie algebra $\Lie(B)$ is graded for the weight, where $b_0$ has weight $1$ and for each $i\geq1$ the letter $b_i$ has weight $i$. We get a decomposition
\[
\Lie(B)=\bigoplus_{m\geq1} \Lie(B)[m],
\]
where $\Lie(B)[m]$ denotes the homogeneous subspace of weight $m$. The Lie algebra $\Lie(B)$ is also bigraded for the weight and depth, where the letter $b_0$ is of bidegree $(1,0)$ and for $i\geq1$ the letter $b_i$ is of bidegree $(i,1)$. As before, we get a decomposition
\[
\Lie(B)=\bigoplus_{m\geq,\ n\leq m} \Lie(B)[m,n],
\]
where $\Lie(B)[m,n]$ denotes the homogeneous subspace of bidegree $(m,n)$.

For a word $w=b_0^{m_1}b_{k_1}\cdots b_0^{m_d}b_{k_d}b_0^{m_{d+1}}$ in $\QB$, where $k_1,\ldots,k_d\geq1,\ m_1,\ldots,m_{d+1}\geq0$, and an index $\mathbf{l}=(l_1,\ldots,l_d)\in \mathbb{Z}_{>0}^d$, we use the notation
\begin{align*}
w(\mathbf{l})=b_0^{m_1}b_{l_1}\cdots b_0^{m_d}b_{l_d}b_0^{m_{d+1}}.
\end{align*}
In particular, if $\mathbf{k}=(k_1,\ldots,k_d)$, then $w(\mathbf{k})=w$. For $\mathbf{k}=(k_1,\ldots,k_d),\ \mathbf{l}=(l_1,\ldots,l_d)$, we write $\mathbf{l}\leq\mathbf{k}$ if $l_i\leq k_i$ for $i\in\{1,\ldots,d\}$. Moreover, we abbreviate
\[|\mathbf{k}|=k_1+\cdots+k_d,\qquad \binom{\mathbf{k}-1}{\mathbf{l}-1}=\prod_{i=1}^d \binom{k_i-1}{l_i-1}. \]

For $w=b_0^{m_1}b_{k_1}\cdots b_0^{m_d}b_{k_d}b_0^{m_{d+1}} \in \QB$, define the derivation $\partial_w:\QB\to\QB$ by
\begin{align*} 
\partial_w(b_0)&=0, \\ 
\partial_w(b_i)&=\sum_{\mathbf{l}\leq\mathbf{k}} (-1)^{|\mathbf{k}|+|\mathbf{l}|} \binom{\mathbf{k}-1}{\mathbf{l}-1} [b_{i+|\mathbf{k}|-|\mathbf{l}|},w(\mathbf{l})], \qquad i\geq1.
\end{align*}
If $w=b_0^m$ for some $m\geq1$, then we set
\[
\partial_w(b_0)=0,\qquad \partial_w(b_i)=[b_i,w].
\]
We define
\begin{equation*} 
\{\psi_1,\psi_2\}_A=\partial_{\psi_1}(\psi_2)-\partial_{\psi_2}(\psi_1)+[\psi_1,\psi_2],\quad \psi_1,\psi_2\in \Lie(B).
\end{equation*}
An immediate consequence of \cite[Theorem 4.13]{BK25} is the following.
\begin{proposition} 
For $\psi_1, \psi_2 \in \QB$, we have the following equality in $\mathrm{Der}_{\Q}(\QB)$
\[
    \partial_{\{\psi_1, \psi_2\}_A}=[\partial_{\psi_1}, \partial_{\psi_2}].
\]    
\end{proposition}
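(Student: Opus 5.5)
The statement is that $\psi\mapsto\partial_\psi$ turns $\{-,-\}_A$ into the commutator of derivations. It is described as an immediate consequence of \cite[Theorem 4.13]{BK25}, so my plan is to reduce it to that theorem plus a bookkeeping check. Both sides are derivations of $\QB$. A derivation of the free algebra $\QB$ is determined by its values on the generators $b_0,b_1,b_2,\ldots$, so it suffices to compare both sides letter by letter.

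\textbf{The letter $b_0$.} Every $\partial_w$ kills $b_0$, so $[\partial_{\psi_1},\partial_{\psi_2}](b_0)=0=\partial_{\{\psi_1,\psi_2\}_A}(b_0)$.

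\textbf{Bilinearity.} The map $w\mapsto\partial_w$ is defined on words and extended linearly. Both sides of the identity are bilinear in $(\psi_1,\psi_2)$, so I may assume $\psi_1=u$ and $\psi_2=v$ are words. Here $\{u,v\}_A$ is defined by the same formula, extended to all of $\QB$.

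\textbf{The letters $b_i$, $i\geq1$.} I would need
\[
\partial_u(\partial_v(b_i))-\partial_v(\partial_u(b_i))=\partial_{\partial_u(v)-\partial_v(u)+[u,v]}(b_i).
\]
\begin{itemize}
\item I would rewrite $\partial_w(b_i)=[\,\cdot\,,\,\cdot\,]$ in the form $\partial_w(b_i)=[T_i(w),\,?]$. Here $T_i$ is the linear ``shift-and-binomially-spread'' operator, so that $\partial_w(b_i)=\sum_{\mathbf l}c_{\mathbf k,\mathbf l}[b_{i+|\mathbf k|-|\mathbf l|},w(\mathbf l)]$.
\item Expanding the left side with the derivation rule splits it into two groups. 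One group is brackets of the form $[\partial_u(b_j),v(\mathbf l)]$ and $[b_j,\partial_u(v(\mathbf l))]$; the other is the symmetric group with $u$ and $v$ exchanged.
\item The terms $[b_j,\partial_u(v(\mathbf l))]$ and their $u\leftrightarrow v$ counterparts should match $\partial_{\partial_u(v)-\partial_v(u)}(b_i)$, provided the compatibility $\partial_u(v)(\mathbf l)$-type identity holds. That identity says that substituting indices commutes with applying $\partial_u$ up to the binomial reindexing. It is exactly what the Vandermonde-type convolution $\sum\binom{k-1}{l-1}\binom{l-1}{j-1}(-1)^{\cdots}$ provides.
\item The remaining double-bracket terms $[[b_{\ast},u(\cdot)],v(\cdot)]-[[b_\ast,v(\cdot)],u(\cdot)]$ recombine by the Jacobi identity into $[b_\ast,[u(\cdot),v(\cdot)]]$. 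This should equal $\partial_{[u,v]}(b_i)$, because the index substitution on a concatenation $uv$ splits as a product of the substitutions on $u$ and on $v$.
\end{itemize}

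\textbf{Main obstacle.} The hard step is the binomial convolution identity showing that $w\mapsto\partial_w(b_i)$ intertwines correctly with $\partial_u$ applied inside $v(\mathbf l)$. This is precisely the content of \cite[Theorem 4.13]{BK25}, which is stated there in terms of a Lie algebra of derivations associated with the balanced setup (the ``$A$'' bracket).

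My first step is therefore to translate: identify the derivation denoted $\partial_w$ here with the one in \cite{BK25}, and the bracket $\{-,-\}_A$ with theirs. I would also check that their statement holds on all of $\QB$, not only on $\Lie(B)$, or else extend it by the bilinearity reduction above.

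The degenerate case $w=b_0^m$ (with $\partial_w(b_i)=[b_i,w]$) must be checked separately against the general formula. This is consistent with the convention for the empty index with sign $+1$, so I expect it to fit the same computation.
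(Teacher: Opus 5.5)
Your proposal is correct, and it ultimately rests on the same input as the paper, which gives no argument beyond calling the identity an immediate consequence of \cite[Theorem 4.13]{BK25}. The difference is that you open up the computation. Your reduction to words and to the letters $b_i$ is fine. The Jacobi step is also right: the substitution coefficients are multiplicative, $(uv)(\mathbf m,\mathbf l)=u(\mathbf m)v(\mathbf l)$, and the total shift is symmetric in $u,v$, so the double brackets recombine into $\partial_{[u,v]}(b_i)$.

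The step you outsource to \cite{BK25} is in fact elementary, so your route can be made self-contained. Let $\Theta\colon\QB\to\QB[Y]$ be the algebra morphism with $b_0\mapsto b_0$ and $b_k\mapsto\sum_{l=1}^{k}(-1)^{k-l}\binom{k-1}{l-1}Y^{k-l}b_l$, and write $\Theta(\psi)=\sum_{s\geq0}Y^s\Theta_s(\psi)$. Then $\partial_\psi(b_i)=\sum_s[b_{i+s},\Theta_s(\psi)]$ for all $\psi\in\QB$, including $\psi=b_0^m$. The identity you need is $\partial_u\circ\Theta=\Theta\circ\partial_u$, with $\partial_u$ extended $Y$-linearly.

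Both sides are $\Theta$-derivations, so it suffices to check letters. Encode $b_k$ by $X^{k-1}$, and a word $u$ with nonzero indices $\mathbf a$ by $U(Z)=\prod_t Z_t^{a_t-1}$. Each of the two terms of the commutator in $\partial_u(b_k)$ is encoded, up to its sign, by $X^{k-1}U(Z-X)$. Meanwhile $\Theta$ substitutes $V\mapsto V-Y$ in every letter variable. So both sides become $(X-Y)^{k-1}U(Z-X)=(X-Y)^{k-1}U\big((Z-Y)-(X-Y)\big)$; this translation invariance is your ``Vandermonde'' identity.

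With this in hand, the $\partial_u(v)$-terms give $\sum_s[b_{i+s},\Theta_s(\partial_u(v))]=\partial_{\partial_u(v)}(b_i)$. The Jacobi terms give $\sum_r[b_{i+r},\Theta_r([u,v])]=\partial_{[u,v]}(b_i)$, because $\Theta$ is multiplicative. The paper's citation buys brevity; your decomposition buys a transparent proof valid on all of $\QB$.

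One caveat: your fallback of extending a $\Lie(B)$-only version of \cite{BK25} to $\QB$ ``by bilinearity'' would not work, since $\Lie(B)$ does not span $\QB$. The direct argument above makes that fallback unnecessary.
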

Thus, one may apply Proposition-Definition \ref{propdef:post-Lie} to $\mathfrak{g}=\Lie(B)$ and $\psi_1 \triangleright \psi_2 = \partial_{\psi_1}(\psi_2)$, and hence deduce that $(\Lie(B),\{-,-\}_A)$ is a Lie algebra.
\begin{remark} \label{rem:bracket_on_b0_b1} The restriction of the derivation $\partial$ to $\Q\langle b_0,b_1\rangle$ equals the derivation corresponding to the Ihara bracket, precisely, we have for $\psi\in \Lie(b_0,b_1)$
\[
\partial_\psi(b_0)=0, \qquad \partial_\psi(b_1)=[b_1,\psi].
\]
In particular, the restriction of the Lie bracket $\{-,-\}_A$ to $\Lie(b_0,b_1)$ is just the Ihara bracket.
\end{remark}
\begin{lemma} \label{lem:bracket_b1b2..}
The Lie bracket $\{-,-\}_A$ preserves the subspace $\Lie(b_1,b_2,\ldots)\subset \Lie(B)$.
\end{lemma}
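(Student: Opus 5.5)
To prove that $\{-,-\}_A$ preserves $\Lie(b_1,b_2,\ldots)$, I will check the three terms of $\{\psi_1,\psi_2\}_A=\partial_{\psi_1}(\psi_2)-\partial_{\psi_2}(\psi_1)+[\psi_1,\psi_2]$ separately for $\psi_1,\psi_2\in\Lie(b_1,b_2,\ldots)$. The term $[\psi_1,\psi_2]$ clearly lies in $\Lie(b_1,b_2,\ldots)$, since this is a Lie subalgebra of $(\Lie(B),[-,-])$. It therefore suffices to show the following.

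\textbf{Claim.} For every $\psi\in\Lie(b_1,b_2,\ldots)$, the derivation $\partial_\psi$ maps $\Lie(b_1,b_2,\ldots)$ into itself.

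First I reduce the claim to generators. The map $\psi\mapsto\partial_\psi$ is linear: it is defined on words $w$ and extended linearly, so $\partial_\psi=\sum_w(\psi\mid w)\,\partial_w$. Moreover, each $\partial_\psi$ is a derivation of $\QB$, so it is in particular a derivation of the Lie algebra $(\QB,[-,-])$. A Lie derivation preserves the Lie subalgebra generated by a set $S$ as soon as it maps $S$ into that subalgebra. Hence it is enough to show that $\partial_\psi(b_i)\in\Lie(b_1,b_2,\ldots)$ for every $i\geq1$.

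Next I compute $\partial_\psi(b_i)$. Since $\psi$ is a Lie polynomial in $b_1,b_2,\ldots$, it lies in $\Q\langle b_1,b_2,\ldots\rangle$, so every word $w$ occurring in $\psi$ has the form $b_{k_1}\cdots b_{k_d}$ with all $m_j=0$. For such a word, $w(\mathbf{l})=b_{l_1}\cdots b_{l_d}$, and
\[
\partial_\psi(b_i)=\sum_{\mathbf{k}}(\psi\mid b_{\mathbf{k}})\sum_{\mathbf{l}\leq\mathbf{k}}(-1)^{|\mathbf{k}|+|\mathbf{l}|}\binom{\mathbf{k}-1}{\mathbf{l}-1}\,[b_{i+|\mathbf{k}|-|\mathbf{l}|},b_{\mathbf{l}}],
\]
where $b_{\mathbf{k}}:=b_{k_1}\cdots b_{k_d}$. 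Define the linear map $\Phi_{j}$ on $\Q\langle b_1,b_2,\ldots\rangle$ by
\[
\Phi_j(b_{\mathbf{k}})=\sum_{\mathbf{l}\leq\mathbf{k}}(-1)^{|\mathbf{k}|+|\mathbf{l}|}\binom{\mathbf{k}-1}{\mathbf{l}-1}\,b_{\mathbf{l}}\,t^{|\mathbf{k}|-|\mathbf{l}|},
\]
with a formal variable $t$ recording the shift that goes into the index of the bracketed letter. Then $\partial_\psi(b_i)$ is obtained from $\Phi(\psi)$ by bracketing with $b_{i+\text{(shift)}}$ on the left.

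\textbf{Key step.} This step is the main obstacle: I must show that, for each fixed shift $s$, the coefficient $\Phi^{(s)}(\psi)$ of $t^s$ is again a Lie polynomial in $b_1,b_2,\ldots$ whenever $\psi$ is. Once this holds, $\partial_\psi(b_i)=\sum_s[b_{i+s},\Phi^{(s)}(\psi)]$ is a Lie element and the claim follows. My approach is to recognise $\Phi$ as an algebra morphism compatible with the shuffle coproduct, so that it preserves primitive elements. Concretely, $\Phi$ is the letter-wise substitution
\[
b_k\mapsto\sum_{l=1}^{k}(-1)^{k-l}\binom{k-1}{l-1}\,b_l\,t^{k-l},
\]
extended multiplicatively; this is possible because the coefficient factors as a product over letters and the total shift is additive. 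It is therefore an algebra endomorphism of $\Q\langle b_1,b_2,\ldots\rangle[t]$ that sends each letter to a linear combination of letters, with $t$ central. Such a map commutes with $\Delta_B$, so it sends primitives to primitives, i.e.\ Lie elements to $\Lie(b_1,b_2,\ldots)[t]$. Taking coefficients of $t^s$ then gives the required statement.

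If this substitution viewpoint fails to match the formula for $\partial_w$ exactly, the fallback is to use the $\Delta_B$-primitivity criterion directly: compute $\Delta_B(\partial_\psi(b_i))$ using the fact that $\partial_\psi$ is a derivation, and check that the cross terms cancel because $\psi$ is primitive.
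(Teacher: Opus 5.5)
Your proof is correct, but it takes a different route from the paper.

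\textbf{The paper's route.} The paper's argument is a one-line observation. In $\partial_w(b_i)$ the only letters $b_0$ are those of $w(\mathbf{l})$, which has exactly as many as $w$, and $\partial_w(b_0)=0$. So $\partial$, and hence $\{-,-\}_A$, creates no new letters $b_0$. For $b_0$-free $\psi_1,\psi_2$ the bracket $\{\psi_1,\psi_2\}_A$ therefore lies in $\Q\langle b_1,b_2,\ldots\rangle$. Since $\{-,-\}_A$ is already known to be a bracket on $\Lie(B)$, the bracket lies in $\Lie(B)\cap\Q\langle b_1,b_2,\ldots\rangle=\Lie(b_1,b_2,\ldots)$.

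\textbf{Your route.} You instead verify the Lie property directly. You reduce to generators through the derivation property. You then recognise $w\mapsto\sum_{\mathbf{l}\leq\mathbf{k}}(-1)^{|\mathbf{k}|+|\mathbf{l}|}\binom{\mathbf{k}-1}{\mathbf{l}-1}w(\mathbf{l})\,t^{|\mathbf{k}|-|\mathbf{l}|}$ as the multiplicative substitution $b_k\mapsto\sum_{l=1}^{k}(-1)^{k-l}\binom{k-1}{l-1}b_l\,t^{k-l}$. This identification is exact, since the sign, the binomial and the shift all factor letterwise. So the substitution is a bialgebra morphism that preserves primitives, and your fallback is not needed.

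\textbf{Trade-off.} The paper's proof is shorter, but it rests on the fact that $\partial_\psi(\Lie(B))\subset\Lie(B)$ for $\psi\in\Lie(B)$. The paper takes this for granted as part of the post-Lie structure coming from \cite{BK25} when it applies Proposition-Definition~\ref{propdef:post-Lie}. Your argument is self-contained. Adding $b_0\mapsto b_0$ to your substitution proves precisely that fact for all of $\Lie(B)$. The paper's $b_0$-counting survives in your proof only as the remark that the substitution maps $\Q\langle b_1,b_2,\ldots\rangle$ into itself because $l\geq1$.
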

\begin{proof} This follows from the observation that the derivation $\partial$ and hence also the Lie bracket $\{-,-\}_A$ does not change the number of $b_0$. 
\end{proof}

\begin{lemma} \label{lem:Ari_with_b0}
The element $b_0$ is central in the Lie algebra $(\Lie(B),\{-,-\}_A)$.
\end{lemma}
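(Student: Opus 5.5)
We need $\{\psi,b_0\}_A=0$ for every $\psi\in\Lie(B)$. We follow the pattern of Lemma \ref{lem:Ihara_with_x0_x1}. By definition,
\[
\{\psi,b_0\}_A=\partial_\psi(b_0)-\partial_{b_0}(\psi)+[\psi,b_0]=-\partial_{b_0}(\psi)+[\psi,b_0],
\]
since $\partial_\psi(b_0)=0$ for every $\psi$; this holds for monomials by definition and then for all $\psi$ by linearity of $w\mapsto\partial_w$. So the claim reduces to the identity $\partial_{b_0}(\psi)=[\psi,b_0]$ for all $\psi\in\Lie(B)$.

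Both sides are derivations of $\QB$ applied to $\psi$. The map $\partial_{b_0}$ is a derivation by definition, and $-\ad_{b_0}=[-,b_0]$ is an inner derivation. Hence $E:=-\partial_{b_0}+[-,b_0]$ is a derivation of $\QB$, and it suffices to show that $E$ vanishes on the generators $b_i$, $i\geq0$. Then $E=0$ on all of $\QB$, in particular on $\Lie(B)$.

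We check the generators using the convention for $w=b_0^m$ with $m=1$, namely $\partial_{b_0}(b_0)=0$ and $\partial_{b_0}(b_i)=[b_i,b_0]$ for $i\geq1$.
\begin{itemize}
\item For $i=0$: $E(b_0)=-0+[b_0,b_0]=0$.
\item For $i\geq1$: $E(b_i)=-[b_i,b_0]+[b_i,b_0]=0$.
\end{itemize}
This proves the lemma.

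The only point needing care is that $\partial_{b_0}$ really falls under the special convention for $w=b_0^m$, rather than the general formula with an empty index $\mathbf{k}$. Both readings give the same result, because with $d=0$ the sum over $\mathbf{l}\leq\mathbf{k}$ has the single empty term, giving $[b_i,b_0]$. So this is routine, and we expect no real obstacle.
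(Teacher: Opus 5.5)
Your proposal is correct and follows the paper's proof: you reduce to $-\partial_{b_0}(\psi)+[\psi,b_0]=0$ using $\partial_\psi(b_0)=0$, observe that $-\partial_{b_0}+[-,b_0]$ is a derivation of $\QB$, and check that it vanishes on the generators $b_0$ and $b_i$ for $i\geq1$. Your side remark that the general formula with empty index $\mathbf{k}$ agrees with the convention $\partial_{b_0}(b_i)=[b_i,b_0]$ is accurate, though the paper does not need it.
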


\begin{proof} For any $\phi\in\QB$, we compute
\begin{align*}
\{\phi,b_0\}=\partial_\phi(b_0)-\partial_{b_0}(\phi)+[\phi,b_0]=-\partial_{b_0}(\phi)+[\phi,b_0].
\end{align*}
Thus, it suffices to verify that
\[-\partial_{b_0}(\phi)+[\phi,b_0]=0.\]
The map $-\partial_{b_0}+[-,b_0]$ is a derivation on $\QB$, hence we only have to prove this equality for $\phi \in B$. We compute for $i\geq 1$
\begin{align*}
-\partial_{b_0}(b_i)+[b_i,b_0]&=-[b_i,b_0]+[b_i,b_0]=0, \\
-\partial_{b_0}(b_0)+[b_0,b_0]&=0.
\qedhere\end{align*}
\end{proof}

\begin{definition} \label{def:lq}
Let $\lqq$ be the set of all $\psi\in \QB$ such that
\begin{enumerate}[label=(\roman*), leftmargin=*, itemsep=5pt]
\begin{multicols}{2}
\item $(\psi\mid b_0)=0$,
\item $\Delta_B(\psi)=\psi\otimes1+1\otimes\psi$,
\item $\tau(\pi_0(\psi))=\pi_0(\psi)$,
\item $(\psi\mid b_0^mb_k)=0$ for $k+m$ even,
\end{multicols}
\end{enumerate}
where $\tau$ is the algebra antiautomorphism of $\QB^0$ given by
\begin{align*} 
\tau(b_0^mb_k)=b_0^{k-1}b_{m+1}
\end{align*}
for any integers $m\geq0$, $k\geq1$.
\end{definition}

\begin{theorem} \label{thm:lq_Lie} \cite[Theorem 4.3]{Bu25}
The pair $(\lqq,\{-,-\}_A)$ is a $\Q$-Lie algebra.
\end{theorem}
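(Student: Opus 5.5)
The plan is to mirror the proof of Theorem \ref{thm:ls_Lie}: realize $\lqq$, up to the central element $b_0$, as the stabilizer of $\tau$ for a suitable action of $(\Lie(B),\{-,-\}_A)$. Centrality of $b_0$ (Lemma \ref{lem:Ari_with_b0}) then lets us cut the stabilizer down to $\lqq$ while keeping a Lie subalgebra.

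\textbf{Step 1: the action.} For $\psi\in\Lie(B)$, set $S_\psi:=\ell_\psi+\partial_\psi$ on $\QB$. The identity $\partial_{\{\psi_1,\psi_2\}_A}=[\partial_{\psi_1},\partial_{\psi_2}]$ says $(\Lie(B),[-,-],\partial)$ is post-Lie. So Proposition \ref{prop:generic_act_end}, applied exactly as in Lemma \ref{lem:Lib_act_QX}, makes $\psi\mapsto S_\psi$ a Lie algebra action of $(\Lie(B),\{-,-\}_A)$ on $\QB$. Since $\partial_\psi(b_0)=0$ and $\ell_\psi$ is a left multiplication, $S_\psi$ preserves $\QB b_0$. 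Via \eqref{eq:QB^0_iso_QB/QB b_0} it therefore descends to an action $\psi\mapsto S^0_\psi$ on $\QB^0$ with $\pi_0\circ S_\psi=S^0_\psi\circ\pi_0$. I then let $\Lie(B)$ act on $\mathrm{End}_\Q(\QB^0)$ by $T\mapsto[S^0_\psi,T]$. The stabilizer $\stab_{\Lie(B)}(\tau)=\{\psi\mid S^0_\psi\tau=\tau S^0_\psi\}$ is then a Lie subalgebra by Proposition-Definition \ref{propdef:generic_stab}.

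\textbf{Step 2: identify the stabilizer with $\lqq\oplus\Q b_0$.} This is the statement of Theorem \ref{thm:stab_equal_lq}, which I would prove here (or assume if it precedes this proof) by the same bigraded case analysis as Theorem \ref{thm:stab_equals_ls}.

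For the necessary condition, evaluate at $1$. Since $S^0_\psi(1)=\pi_0(\psi)$ and $\tau(1)=1$, commuting with $\tau$ forces $\tau(\pi_0(\psi))=\pi_0(\psi)$, which is condition (iii).

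For the sufficient direction, decompose $S^0_\psi=D^0_\psi+r_{\pi_0(\psi)}$, where $D^0_\psi$ is the derivation induced by $\partial_\psi+\ad_\psi$. Using that $\tau$ is an antiautomorphism, $\tau\circ r_{g}=\ell_{\tau(g)}\circ\tau$. Together with $\tau(g)=g$, the commutation reduces to an identity on the generators $b_0^mb_k$. There it should hold exactly when the depth-one coefficients $(\psi\mid b_0^mb_k)$ vanish for $m+k$ even, in analogy with Lemma \ref{lem:coder_difference}\ref{f_i0} and Lemma \ref{lem:nonvanishing_sum}. The weight-one parts are handled by hand: $b_0$ lies in the stabilizer, $b_1$ does not satisfy (iv), and $\Lie(B)[1]=\Q b_0\oplus\Q b_1$.

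\textbf{Step 3: conclude.} Since $b_0$ is central (Lemma \ref{lem:Ari_with_b0}), the map $\stab_{\Lie(B)}(\tau)\to\Q b_0$, projecting onto the $b_0$-coefficient, is a Lie morphism to an abelian Lie algebra. Its kernel is an ideal, and by Step 2 that kernel is exactly $\lqq$, since (i) removes the $b_0$ component. Hence $(\lqq,\{-,-\}_A)$ is a Lie algebra.

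The main obstacle is the sufficiency computation in Step 2. It needs an explicit formula for $D^0_\psi(b_0^mb_k)$, an analogue of Lemma \ref{lem:DYf_yn} for the derivation $\partial_\psi$ with its binomial sums, and then a check that $\tau$-invariance of $\pi_0(\psi)$ together with the parity condition makes the defect vanish. I would first attempt this on the generators $b_k$ ($m=0$), then propagate to $b_0^mb_k$ using $[S^0_\psi,\ell_{b_0}]$.
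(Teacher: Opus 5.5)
Your Steps 1 and 3 are exactly the paper's argument: the action $S_\psi=\ell_\psi+\partial_\psi$, its descent to $\QB^0$, the stabilizer of $\tau$, and the kernel of the projection onto the central line $\Q b_0$. The paper proves Theorem \ref{thm:stab_equal_lq} before this statement, so simply citing it would finish the proof as the paper does. The trouble is Step 2, which carries all the content and which you propose to prove yourself. Your plan for $\lqq\subset\stab_{\Lie(B)}(\tau)$ fails at its central claim: $E:=S^0_\psi-r_{\pi_0(\psi)}$ is in general \emph{not} a derivation of $\QB^0$. Put $D_\psi:=\partial_\psi+\ad_\psi$. For $u,v\in\QB^0$ without constant term one has $E(u)=\pi_0(D_\psi(u))$ and
\[
E(uv)-E(u)v-uE(v)=\big((\id-\pi_0)D_\psi(u)\big)\,v .
\]
So $E$ is a derivation if and only if $D_\psi$ preserves $\QB^0$. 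In the $\QY$ case this holds because $D_f(x_1)=0$. Here $\ad_\psi$ only cancels the $\mathbf{l}=\mathbf{k}$ term of $\partial_\psi(b_i)$. The remaining terms $[b_{i+|\mathbf{k}|-|\mathbf{l}|},w(\mathbf{l})]$ with $\mathbf{l}\neq\mathbf{k}$ contain words ending in $b_0$ whenever $w$ does.

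Here is a concrete counterexample. Take $\psi=[b_2,b_0]$. It lies in $\lqq[3,1]$, since $\pi_0(\psi)=-b_0b_2$ is $\tau$-invariant and $1+2$ is odd. Yet $D_\psi(b_1)=-[b_2,[b_1,b_0]]$ contains $-b_2b_1b_0$, so $E(b_1b_1)-E(b_1)b_1-b_1E(b_1)=-b_2b_1b_0b_1\neq0$.

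Consequently, the identity your manipulation reduces to, $\tau\circ E\circ\tau-E=-\ad_{\pi_0(\psi)}$, has a left-hand side that is not a derivation. It therefore cannot be verified on the generators $b_0^mb_k$, and propagating via $[S^0_\psi,\ell_{b_0}]$ presupposes the same reduction.

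The paper avoids this differently. It splits $\partial_\psi=\partial^R_\psi-\partial^L_\psi$ and uses the $\tau$-conjugation formulas of Proposition \ref{prop:tau_partial}. It combines these with $\sec(\pi_0(\psi))=\psi$ (Proposition \ref{prop:sec_pi0_inverse}) and the $\rho$-invariance of $\pi_0(\psi)$ (Proposition \ref{prop:lq_rho_inv}). The correction term in Proposition \ref{prop:tau_partial}(a) turns the conjugated left multiplication $\tau(\psi\,\tau(v))$ back into $\psi v$. Condition (iv) is precisely what $\rho$-invariance demands in depth one, since $\rho(b_0^mb_k)=(-1)^{m+k-1}b_0^mb_k$. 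Nothing in your sketch substitutes for these identities.

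Likewise, necessity of (iv) in even weight $m$ needs an explicit computation. Write $\psi=\sum_k\lambda_k\ad_{b_0}^{m-k}(b_k)$. Comparing the coefficients of $b_0^{t_1}b_sb_0^{t_2}b_1$, $t_1\geq1$, in $S^0_\psi(b_1)$ and $\tau S^0_\psi(b_1)$ forces $\lambda_2=\dots=\lambda_m=0$, and $\tau$-invariance of $\pi_0(\psi)$ then kills $\lambda_1$. An analogy with Lemma \ref{lem:nonvanishing_sum} does not supply this.

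Finally, your weight-one bookkeeping is wrong. The element $b_1$ \emph{does} satisfy (iv), because $m+k=0+1$ is odd, and $\tau(b_1)=b_1$. Hence $b_1\in\lqq$, which the paper uses in Proposition \ref{prop:theta_ex}. The correct picture is $\stab_{\Lie(B)}(\tau)[1]=\Q b_0\oplus\Q b_1$ with $\lqq[1]=\Q b_1$. With $b_1\notin\lqq$, your Step 2 equality would fail in bidegree $(1,1)$, since $b_1$ lies in the stabilizer.
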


\subsection{Stabilizer interpretation of the linearized balanced Lie algebra}
We give an alternative proof for Theorem \ref{thm:lq_Lie} by showing that $\lqq$ is essentially the stabilizer of the involution $\tau$ with respect to an action of the Lie algebra $(\Lie(B),\{-,-\}_A)$.

For $\psi \in \Lie(B)$, let $\sigma_\psi$ be the $\Q$-linear endomorphism of $\QB$ given by
\begin{equation} \label{eq:def_sigma}
    \sigma_\psi := \ell_\psi + \partial_{\psi}.
\end{equation}
where $\ell_\psi$ is the endomorphism of $\QB$ given by left multiplication by $\psi$.

\begin{lemma} \label{lem:Lib_act_QB}
There exists a $\Q$-Lie algebra action of $(\Lie(B), \{-,-\}_A)$ by $\Q$-linear endomorphisms on $\QB$ given by
\[
(\Lie(B), \{-,-\}_A) \to \mathrm{End}_{\Q}(\QB), \quad \psi \mapsto \sigma_{\psi}.
\]
\end{lemma}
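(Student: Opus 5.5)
The plan mirrors the proof of Lemma \ref{lem:Lib_act_QX}: we want to invoke Proposition \ref{prop:generic_act_end} \ref{generic_act_end} with $\mathfrak{g}=\Lie(B)$, the post-Lie product $\psi_1\triangleright\psi_2=\partial_{\psi_1}(\psi_2)$, and $\mathrm{end}_\psi=\sigma_\psi=\ell_\psi+\partial_\psi$. The required post-Lie input is already available. Each $\partial_\psi$ is a derivation of $\QB$ by construction. It is also a derivation of the commutator bracket on $\Lie(B)$, since a derivation of the associative algebra is automatically one for $[-,-]$. Moreover, $\partial_\psi$ preserves $\Lie(B)$, because $\partial_\psi(b_i)$ is a commutator of elements of $\Lie(B)$ and derivations of $\QB$ determined on generators by Lie elements preserve the free Lie algebra. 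Finally, the preceding Proposition gives $\partial_{\{\psi_1,\psi_2\}_A}=[\partial_{\psi_1},\partial_{\psi_2}]$. These are exactly the hypotheses under which $(\Lie(B),\{-,-\}_A)$ arises from Proposition-Definition \ref{propdef:post-Lie}. Hence the generic statement applies verbatim and yields $\sigma_{\{\psi_1,\psi_2\}_A}=[\sigma_{\psi_1},\sigma_{\psi_2}]$.

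If one prefers a direct check instead of citing the generic result, I would expand the commutator:
\[
[\sigma_{\psi_1},\sigma_{\psi_2}]=[\ell_{\psi_1},\ell_{\psi_2}]+[\ell_{\psi_1},\partial_{\psi_2}]+[\partial_{\psi_1},\ell_{\psi_2}]+[\partial_{\psi_1},\partial_{\psi_2}].
\]
Each term is evaluated as follows.
\begin{itemize}
\item The first term is $\ell_{[\psi_1,\psi_2]}$.
\item Because $\partial_{\psi_1}$ is a derivation, $\partial_{\psi_1}(\psi_2 w)=\partial_{\psi_1}(\psi_2)w+\psi_2\partial_{\psi_1}(w)$, so $[\partial_{\psi_1},\ell_{\psi_2}]=\ell_{\partial_{\psi_1}(\psi_2)}$.
\item By the same reasoning, $[\ell_{\psi_1},\partial_{\psi_2}]=-\ell_{\partial_{\psi_2}(\psi_1)}$.
\item The last term equals $\partial_{\{\psi_1,\psi_2\}_A}$ by the Proposition above.
\end{itemize}
Summing, the left-multiplication parts give $\ell_{\partial_{\psi_1}(\psi_2)-\partial_{\psi_2}(\psi_1)+[\psi_1,\psi_2]}=\ell_{\{\psi_1,\psi_2\}_A}$. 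Altogether this is $\sigma_{\{\psi_1,\psi_2\}_A}$, and linearity of $\psi\mapsto\sigma_\psi$ is clear.

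The only step needing care is the last identity, $[\partial_{\psi_1},\partial_{\psi_2}]=\partial_{\{\psi_1,\psi_2\}_A}$, which is the deep input from \cite[Theorem 4.13]{BK25}. Here I must check that it is stated for all $\psi\in\Lie(B)$, including the special convention for $\partial_{b_0^m}$, and that $\partial$ extends linearly from words to $\QB$. Everything else is the formal computation above.
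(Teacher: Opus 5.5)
Your proposal is correct and is essentially the paper's proof. The paper applies Proposition \ref{prop:generic_act_end}\ref{generic_act_end} with $\mathrm{end}_\psi=\sigma_\psi$ to the post-Lie structure $\psi_1\triangleright\psi_2=\partial_{\psi_1}(\psi_2)$ on $\Lie(B)$, and your direct expansion of $[\sigma_{\psi_1},\sigma_{\psi_2}]$ is exactly the computation in the proof of that proposition. One caveat: your one-line reason that $\partial_\psi$ preserves $\Lie(B)$ is quick, since the $w(\mathbf{l})$ are words rather than Lie elements; one regroups via the letter-substitution morphism $b_k\mapsto\sum_{l}\binom{k-1}{l-1}(-t)^{k-l}b_l$ to justify it. Your direct computation does not need this fact anyway.
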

\begin{proof}
As $(\Lie(B),\{-,-\}_A)$ is induced from a post-Lie structure, we can apply Proposition \ref{prop:generic_act_end} \ref{generic_act_end} with $\mathrm{end}_\psi = \sigma_\psi$ for any $\psi \in \Lie(B)$. 
\end{proof}

Note that $\sigma_\psi$ is an endomorphism of $\QB b_0$. Therefore, thanks to \eqref{eq:QB^0_iso_QB/QB b_0} we obtain a unique $\Q$-linear endomorphism $\sigma^0_\psi$ of $\QB^0$ such that the diagram
\begin{equation} \label{eq:diagram_sigma_0}
\begin{tikzcd}
    \QB \ar[rr, "\sigma_\psi"] \ar[d, "\pi_0"'] && \QB \ar[d, "\pi_0"] \\
    \QB^0 \ar[rr, "\sigma^0_\psi"] && \QB^0
\end{tikzcd}
\end{equation}
commutes.
\begin{proposition} \label{prop:Lib_act_QB0}
    There exists a $\Q$-Lie algebra action of $(\Lie(B), \{-,-\}_A)$ by $\Q$-linear endomorphisms on $\QB^0$ given by
    \[
        (\Lie(B), \{-,-\}_A) \to \mathrm{End}_{\Q}(\QB^0), \quad \psi \mapsto \sigma_\psi^0.
    \]
\end{proposition}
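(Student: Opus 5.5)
The action on $\QB^0$ will be obtained by descending the action of Lemma \ref{lem:Lib_act_QB} through the quotient \eqref{eq:QB^0_iso_QB/QB b_0}, in exactly the way Proposition \ref{prop:Lib_act_QY} was deduced from Lemma \ref{lem:Lib_act_QX}.

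\textbf{Step 1: $\sigma_\psi$ preserves $\QB b_0$.} For $u\in\QB$ we have
\[
\sigma_\psi(ub_0)=\psi u b_0+\partial_\psi(u)b_0+u\,\partial_\psi(b_0)=\big(\psi u+\partial_\psi(u)\big)b_0,
\]
because $\partial_\psi$ is a derivation with $\partial_\psi(b_0)=0$. This holds both for words $w$ containing some $b_i$ with $i\geq1$ and for $w=b_0^m$, and it extends to all $\psi\in\Lie(B)$ by linearity. Hence $\sigma_\psi(\QB b_0)\subset\QB b_0$.

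\textbf{Step 2: definition of $\sigma^0_\psi$.} By Step 1, $\sigma_\psi$ induces an endomorphism of $\QB/\QB b_0\simeq\QB^0$. Concretely, $\sigma^0_\psi=\pi_0\circ\sigma_\psi|_{\QB^0}$, and this is the unique map making \eqref{eq:diagram_sigma_0} commute, since $\pi_0$ is surjective with kernel $\QB b_0$.

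\textbf{Step 3: the action property.} Let $\psi_1,\psi_2\in\Lie(B)$ and $v\in\QB^0$. Lemma \ref{lem:Lib_act_QB} gives
\[
\sigma_{\{\psi_1,\psi_2\}_A}=[\sigma_{\psi_1},\sigma_{\psi_2}].
\]
Two applications of the commutative square \eqref{eq:diagram_sigma_0} give $\pi_0\circ\sigma_{\psi_1}\circ\sigma_{\psi_2}=\sigma^0_{\psi_1}\circ\sigma^0_{\psi_2}\circ\pi_0$. Applying $\pi_0$ to the displayed identity and using $\pi_0(v)=v$, we obtain
\[
\sigma^0_{\{\psi_1,\psi_2\}_A}(v)=[\sigma^0_{\psi_1},\sigma^0_{\psi_2}](v).
\]
The map $\psi\mapsto\sigma^0_\psi$ is linear because $\psi\mapsto\sigma_\psi$ is linear, so it is a Lie algebra action on $\QB^0$.

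The only step that needs care is Step 1. It must be checked that the convention $\partial_{b_0^m}(b_0)=0$ and the general formula $\partial_w(b_0)=0$ together give a derivation killing $b_0$ for every $\psi\in\Lie(B)$. Both hold directly from the definitions, so Step 1 goes through without difficulty.
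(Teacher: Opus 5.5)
Your proposal is correct and follows the paper's approach. The paper's proof just says the statement follows from Lemma \ref{lem:Lib_act_QB} and the commutative square \eqref{eq:diagram_sigma_0}. Your Steps 1--3 fill in the details it leaves implicit: $\sigma_\psi$ preserves $\QB b_0$ because $\partial_\psi$ is a derivation with $\partial_\psi(b_0)=0$, and surjectivity of $\pi_0$ carries the identity $\sigma_{\{\psi_1,\psi_2\}_A}=[\sigma_{\psi_1},\sigma_{\psi_2}]$ down to $\QB^0$.
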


\begin{proof} This follows from Lemma \ref{lem:Lib_act_QB} and the diagram in \eqref{eq:diagram_sigma_0}.
\end{proof}

\begin{lemma} The action given in Proposition \ref{prop:Lib_act_QB0} gives rise to an action of $(\Lie(B), \{-,-\}_A)$ on the space $\mathrm{End}(\QB^0)$ of endomorphisms given by 
\begin{equation}
    \label{LA_act_on_tau}
    \psi \longmapsto \left( t \mapsto \sigma^0_\psi \circ t - t \circ \sigma^0_\psi \right),
\end{equation}
where $\psi \in \Lie(B)$ and $t \in \mathrm{End}(\QB^0)$. 
\end{lemma}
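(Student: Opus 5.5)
The statement is the general fact that a Lie algebra action on a vector space $V$ induces an action on $\mathrm{End}(V)$ by commutators. I would apply this with $V=\QB^0$ and the action $\psi\mapsto\sigma^0_\psi$ from Proposition \ref{prop:Lib_act_QB0}; alternatively, I would invoke the generic statement in Proposition \ref{prop:generic_act_end}, as was done for Lemma \ref{lem:Lib_act_QB}. For a self-contained argument, set $A_\psi(t):=\sigma^0_\psi\circ t-t\circ\sigma^0_\psi=[\sigma^0_\psi,t]$, where the bracket is the commutator in the associative algebra $\mathrm{End}(\QB^0)$. Two things need checking. First, $\psi\mapsto A_\psi$ is $\Q$-linear; this is immediate, since $\psi\mapsto\sigma_\psi=\ell_\psi+\partial_\psi$ is linear and $\pi_0$ is linear, so $\psi\mapsto\sigma^0_\psi$ is linear. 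Second, $A_{\{\psi_1,\psi_2\}_A}=A_{\psi_1}\circ A_{\psi_2}-A_{\psi_2}\circ A_{\psi_1}$.

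For the bracket identity, note that $A_\psi=\ad_{\sigma^0_\psi}$ is the adjoint map of the associative, hence Lie, algebra $\mathrm{End}(\QB^0)$. The Jacobi identity in $\mathrm{End}(\QB^0)$ gives $[\ad_S,\ad_T]=\ad_{[S,T]}$ for all $S,T\in\mathrm{End}(\QB^0)$. Taking $S=\sigma^0_{\psi_1}$ and $T=\sigma^0_{\psi_2}$, we get
\[
[A_{\psi_1},A_{\psi_2}]=\ad_{[\sigma^0_{\psi_1},\sigma^0_{\psi_2}]}=\ad_{\sigma^0_{\{\psi_1,\psi_2\}_A}}=A_{\{\psi_1,\psi_2\}_A},
\]
where the middle equality is exactly the statement of Proposition \ref{prop:Lib_act_QB0}, namely that $\psi\mapsto\sigma^0_\psi$ is a Lie algebra morphism into $\mathrm{End}(\QB^0)$ with the commutator bracket.

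No step here is a real obstacle. The only substantive input is Proposition \ref{prop:Lib_act_QB0}, which itself rests on two facts: $\sigma_\psi$ preserves $\QB b_0$, so it descends along $\pi_0$, and descent is compatible with commutators because $\pi_0\circ\sigma_\psi=\sigma^0_\psi\circ\pi_0$. The remaining care is in sign and order conventions. The definition $t\mapsto\sigma^0_\psi t-t\sigma^0_\psi$ is what makes this a left action, and it matches the convention used in \eqref{LA_act_on_Delta*}.
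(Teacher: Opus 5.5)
Your argument is correct and is essentially the paper's proof. The paper expands $\phi\cdot(\psi\cdot t)-\psi\cdot(\phi\cdot t)$ by hand, which is exactly the identity $[\ad_S,\ad_T]=\ad_{[S,T]}$ with $S=\sigma^0_\phi$ and $T=\sigma^0_\psi$, and then uses that $\psi\mapsto\sigma^0_\psi$ respects brackets; the only difference is that it re-derives this through $\pi_0$ from Lemma \ref{lem:Lib_act_QB} rather than citing Proposition \ref{prop:Lib_act_QB0}. One caveat: your aside about invoking Proposition \ref{prop:generic_act_end} would not work directly, since that result gives an action on $U(\mathfrak{g})$ itself, not on its space of endomorphisms; your main argument does not depend on it.
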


\begin{proof}
Set $\psi\cdot t:=\sigma_\psi^0\circ t - t\circ \sigma_\psi^0$ for any $\psi\in\Lie(B)$ and $t\in \mathrm{End}(\QB^0)$. \newline
For $\phi,\psi\in\Lie(B)$, and $t\in\mathrm{End}(\QB^0)$, we have
\begin{align*}
\phi\cdot (\psi\cdot t) - \psi\cdot(\phi\cdot t)&=\phi\cdot (\sigma_\psi^0\circ t - t\circ \sigma_\psi^0)-\psi\cdot(\sigma_\phi^0\circ t - t\circ \sigma_\phi^0) \\
&=\sigma_\phi^0\circ \sigma_\psi^0 \circ t - \sigma_\phi^0\circ t\circ \sigma_\psi^0 -\sigma_\psi^0\circ t \circ \sigma_\phi^0 + t\circ \sigma_\psi^0\circ \sigma_\phi^0 \\
&\hspace{0.4cm} - \sigma_\psi^0\circ \sigma_\phi^0\circ t +\sigma_\psi^0\circ t\circ \sigma_\phi^0 +\sigma_\phi^0\circ t \circ \sigma_\psi^0 - t\circ \sigma_\phi^0\circ \sigma\psi^0 \\
&=[\sigma_\phi^0,\sigma_\psi^0]\circ t - t\circ [\sigma_\phi^0,\sigma_\psi^0] \\
&=\pi_0\circ [\sigma_\phi,\sigma_\psi]\circ t - t\circ \pi_0\circ [\sigma_\phi,\sigma_\psi].
\end{align*}
By Lemma \ref{lem:Lib_act_QB}, we have $[\sigma_\phi,\sigma_\psi]=\sigma_{\{\phi,\psi\}_A}$. Hence, we deduce
\begin{align*}
\phi\cdot (\psi\cdot t) - \psi\cdot(\phi\cdot t)&=\pi_0\circ\sigma_{\{\phi,\psi\}_A}\circ t - t\circ \pi_0\circ \sigma_{\{\phi,\psi\}_A}\\
&=\sigma_{\{\phi,\psi\}_A}^0\circ t - t\circ \sigma_{\{\phi,\psi\}_A}^0 \\
&=\{\phi,\psi\}_A\cdot t. 
\end{align*}
Thus, the map in \eqref{LA_act_on_tau} is a Lie algebra morphism.
\end{proof}

\begin{definition} \label{def:stab_tau}
The stabilizer\footnote{The reader may refer to Proposition-Definition \ref{propdef:generic_stab} for a more general statement.} Lie algebra of $\tau \in \mathrm{End}(\QB^0)$ with respect to the action in \eqref{LA_act_on_tau} is the Lie subalgebra of $(\Lie(B), \{-,-\}_A)$ given by
\[
    \stab_{\Lie(B)}(\tau) := \left\{ \psi \in \Lie(B) \mid \sigma^0_\psi \circ \tau = \tau \circ \sigma^0_\psi \right\}.
\]
\end{definition}
In other words, $\stab_{\Lie(B)}(\tau)$ consists exactly of the elements $\psi\in \Lie(B)$ such that $\sigma_\psi^0$ commutes with $\tau$.
By construction, $\stab_{\Lie(B)}(\tau)$ is a Lie subalgebra of $(\Lie(B), \{-,-\}_A)$.

\begin{proposition} \label{prop:lq_subset_stab}
We have the following inclusion of subspaces of $\Lie(B)$
\[
\lqq\subset \stab_{\Lie(B)}(\tau).
\]
\end{proposition}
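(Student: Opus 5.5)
Fix $\psi\in\lqq$, homogeneous of weight $p$. The aim is to show that $\sigma^0_\psi$ commutes with $\tau$ on $\QB^0$. Since both sides of $\sigma^0_\psi\circ\tau=\tau\circ\sigma^0_\psi$ are linear, it suffices to check the identity on words. The plan is to mirror the proof of Proposition \ref{prop:ls_subset_stab}: split $\sigma^0_\psi$ into a derivation part and a multiplication part, then exploit how $\tau$ interacts with each.

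\textbf{Step 1: decompose $\sigma^0_\psi$.} Restricted to $\QB^0$, the endomorphism $\sigma^0_\psi$ should equal
\[
\sigma^0_\psi=\mathcal D_\psi+r_{\pi_0(\psi)},
\]
where $\mathcal D_\psi:=(\partial_\psi+\ad_\psi)|_{\QB^0}$. Indeed, for $w\in\QB^0$ we have $\psi w=[\psi,w]+w\psi$ and $\pi_0(w\psi)=w\,\pi_0(\psi)$. Since $\partial_\psi$ and $\ad_\psi$ preserve $\QB b_0$ and commute with $\pi_0$ in the appropriate sense, $\mathcal D_\psi$ descends to a derivation $D^0_\psi$ of the algebra $\QB^0$; I would verify this as the analogue of \cite[Proposition 4.1.4]{Ra02}.

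\textbf{Step 2: reduce to generators.} Because $\tau$ is an antiautomorphism of $\QB^0$ and $\pi_0(\psi)$ is $\tau$-invariant by condition (iii), we get
\[
\tau\circ r_{\pi_0(\psi)}=\ell_{\pi_0(\psi)}\circ\tau .
\]
Hence the goal becomes
\[
\tau\circ D^0_\psi-D^0_\psi\circ\tau=\ell_{\pi_0(\psi)}\circ\tau-r_{\pi_0(\psi)}\circ\tau=\ad_{\pi_0(\psi)}\circ\tau,
\]
that is, $\tau D^0_\psi\tau-D^0_\psi=\ad_{\pi_0(\psi)}$. Conjugating a derivation by an antiautomorphism gives a derivation, so the left-hand side is a derivation of $\QB^0$, as is $\ad_{\pi_0(\psi)}$. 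It therefore suffices to verify
\[
\tau\big(D^0_\psi(b_0^mb_k)\big)=D^0_\psi(b_0^{k-1}b_{m+1})+[\pi_0(\psi),b_0^{k-1}b_{m+1}]
\]
on the free generators $b_0^mb_k$ ($m\ge0$, $k\ge1$) of $\QB^0$.

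\textbf{Step 3: the generator computation (main obstacle).} Here one must expand $\partial_\psi(b_k)$, with its binomial sums over $\mathbf l\le\mathbf k$, compute $\pi_0$ of $\partial_\psi(b_0^mb_k)+[\psi,b_0^mb_k]$, and compare under $\tau$. I expect this to be the analogue of Lemma \ref{lem:coder_difference} and \cite[Lemma A.2]{Fu11}. Concretely, I would first write $\psi=\sum_i\pi_0$-parts times $b_0^i$, in the style of $\sec$, so that $\psi$ is recovered from $g=\pi_0(\psi)$ together with $S_B(\psi)=-\psi$. I would then express both sides in terms of $g$ and $\tau(g)=g$.

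The expected outcome is that the discrepancy between the two sides is a combination of terms proportional to the depth-one coefficients $(\psi\mid b_0^{m'}b_{k'})$ with $m'+k'=p$. These correspond to $b_0^{m'}b_{k'}$ terms, which are $\tau$-compatible only up to parity, carrying a factor like $(1+(-1)^p)$ exactly as in Lemma \ref{lem:coder_difference}\ref{f_i0}. Condition (iv) of Definition \ref{def:lq} kills these terms when $p$ is even, and the parity factor kills them when $p$ is odd. Condition (i) handles the degenerate weight-one piece.

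If the direct computation becomes unmanageable, my fallback is to use that $\lqq$ is generated by, or contains, the image $\theta(\ls)$ together with depth-graded pieces. Alternatively, I would reduce by depth and filter the verification by the number of $b_0$'s, using Lemma \ref{lem:bracket_b1b2..}.
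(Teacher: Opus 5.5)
There is a genuine gap in Step 1, and Step 2 depends on it. The issue is the claim that $\sigma^0_\psi-r_{\pi_0(\psi)}$ is a derivation of $\QB^0$.

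For $w\in\QB^0$ you do have $\sigma^0_\psi(w)-w\,\pi_0(\psi)=\pi_0\big(D_\psi(w)\big)$ with $D_\psi:=\partial_\psi+\ad_\psi$. However, this map is a derivation of $\QB^0$ only if $D_\psi$ sends $\QB^0$ into itself. Indeed, for $w_1\in\QB^0$ and a non-empty word $w_2\in\QB^0$ the Leibniz defect is $\big(D_\psi(w_1)-\pi_0(D_\psi(w_1))\big)w_2$. In the $X$-setting this defect vanishes because $D_f(x_1)=0$. Here $D_\psi(b_i)\neq0$ and it can end in $b_0$. Also, $\ad_\psi$ does not preserve $\QB b_0$, since $ub_0\psi$ need not end in $b_0$.

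Concretely, $\psi=[b_0,b_2]$ lies in $\lqq$: $\pi_0(\psi)=b_0b_2$ is $\tau$-invariant and $1+2$ is odd. The definition of $\partial$ gives $\partial_\psi(b_i)=[b_i,\psi]-[b_{i+1},[b_0,b_1]]$. Hence $D_\psi(b_i)=-[b_{i+1},[b_0,b_1]]$, which contains the word $b_{i+1}b_1b_0$. Consequently $D^0_\psi(b_1b_1)-D^0_\psi(b_1)\,b_1-b_1\,D^0_\psi(b_1)=b_2b_1b_0b_1\neq0$. So $\tau D^0_\psi\tau-D^0_\psi$ is not a derivation, and checking your identity on the generators $b_0^mb_k$ says nothing about general words. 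Separately, the sign in Step 2 is off: $\tau$-invariance of $\pi_0(\psi)$ turns the goal into $\tau D^0_\psi\tau-D^0_\psi=-\ad_{\pi_0(\psi)}$.

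The paper makes no reduction to generators. It checks $\tau\circ\sigma^0_\psi\circ\tau\circ\pi_0=\pi_0\circ\sigma_\psi$ on every non-empty word $v$. It splits $\partial_\psi=\partial^R_\psi-\partial^L_\psi$ and uses the closed formulas of Proposition \ref{prop:tau_partial} for the $\tau$-conjugates of $\partial^{R,0}$ and $\partial^{L,0}$, which hold for arbitrary words. It also uses $\sec(\pi_0(\psi))=\psi$ from Proposition \ref{prop:sec_pi0_inverse}. The $R$-part produces the extra terms $\psi v-\tau\big(\psi\,\tau(v)\big)$. The second term cancels the conjugated left multiplication $\tau\big(\psi\,\tau(v)\big)$, and the first is exactly $\ell_\psi(v)$. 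The $L$-part is conjugated into $\partial^{L,0}_{\rho(\pi_0(\psi))}$.

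The ingredient your plan never identifies is the $\rho$-invariance $\rho(\pi_0(\psi))=\pi_0(\psi)$ of Proposition \ref{prop:lq_rho_inv}. This is where condition (iv) really enters: in depth one, $\rho(b_0^mb_k)=(-1)^{m+k-1}b_0^mb_k$, so $\rho$-invariance is exactly (iv). In higher depth it is not a scalar parity factor as your Step 3 anticipates.

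Your fallback is not available either. There is no known generation result for $\lqq$ that would reduce the claim to $\theta(\ls)$ and depth-one elements, and merely containing $\theta(\ls)$ does not help.
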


To prove this proposition, we first introduce several maps and results.
For a word $w=b_0^{m_0}b_{k_1}\cdots b_0^{m_d}b_{k_d}$ in $\QB^0$, where $k_1,\ldots,k_d\geq1,\ m_1,\ldots,m_d\geq0$, and an index $\mathbf{n}=(n_1,\ldots,n_d)\in \mathbb{Z}_{\geq0}^d$ we use the notation
\begin{align*}
w(\overline{\mathbf{n}})=b_0^{n_1}b_{k_1}\cdots b_0^{n_d}b_{k_d}.
\end{align*}
In particular, if $\mathbf{m}=(m_1,\ldots,m_d)$ then $w(\overline{\mathbf{m}})=w$.
The map $\sec:\QB^0\to \QB$ is defined by 
\begin{align*}
\sec(w)=\sum_{\mathbf{n}\leq \mathbf{m}}(-1)^{|\mathbf{m}|+|\mathbf{n}|}\binom{\mathbf{m}}{\mathbf{n}} w(\overline{\mathbf{n}})b_0^{|\mathbf{m}|-|\mathbf{n}|}, \text{ for } w = w(\overline{\mathbf{m}}) \in \QB^0. 
\end{align*}
Denote by $\gamma_0:\QB\to\QB$ the derivation given by $\gamma_0(b_0)=1$ and $\gamma_0(b_a)=0$ for $a\geq1$. Then, we have
\begin{align*}
\kernel=\Q\langle \ad_{b_0}^m(b_k)\mid k\geq1,m\geq0\rangle.
\end{align*}

\begin{proposition} \label{prop:sec_pi0_inverse} \cite[Proposition 5.2]{Bu25} We have
\begin{enumerate}[leftmargin=*, label=(\alph*)]
    \item $\pi_0 \circ \sec = \id_{\QB^0}$,
    \item $\sec \circ \pi_0 = \id_{\kernel}$.
\end{enumerate}
\end{proposition}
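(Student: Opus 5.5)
Part (a) is a direct inspection of the definition of $\sec$, and part (b) will follow from two facts: $\sec$ lands in $\kernel$, and $\pi_0$ is injective on $\kernel$. Throughout, I write the decomposition of an element of $\QB$ as $\sum_{j\geq 0} x_j b_0^j$ with $x_j\in\QB^0$. This decomposition is unique, since every word factors uniquely as a word not ending in $b_0$ followed by a power of $b_0$.

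For (a), take $w=w(\overline{\mathbf{m}})\in\QB^0$. In the sum defining $\sec(w)$, every term with $\mathbf{n}\neq\mathbf{m}$ has $|\mathbf{m}|-|\mathbf{n}|>0$, so it lies in $\QB b_0$ and is killed by $\pi_0$. The only surviving term is $\mathbf{n}=\mathbf{m}$, which has coefficient $1$ and equals $w$. Hence $\pi_0\circ\sec=\id_{\QB^0}$.

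For (b), I first show $\gamma_0(\sec(w))=0$ for every word $w=w(\overline{\mathbf{m}})$. Applying the derivation $\gamma_0$ to a term $w(\overline{\mathbf{n}})b_0^{|\mathbf{m}|-|\mathbf{n}|}$ produces two kinds of contributions.
\begin{itemize}
\item Removing one $b_0$ from the $i$-th inner block gives $n_i\, w(\overline{\mathbf{n}-e_i})b_0^{|\mathbf{m}|-|\mathbf{n}|}$.
\item Removing one $b_0$ from the trailing power gives $(|\mathbf{m}|-|\mathbf{n}|)\, w(\overline{\mathbf{n}})b_0^{|\mathbf{m}|-|\mathbf{n}|-1}$.
\end{itemize}
Now fix $\mathbf{n}'\leq\mathbf{m}$ and collect the coefficient of $w(\overline{\mathbf{n}'})b_0^{|\mathbf{m}|-|\mathbf{n}'|-1}$. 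The inner contributions come from $\mathbf{n}=\mathbf{n}'+e_i$ and give
\[
(-1)^{|\mathbf{m}|+|\mathbf{n}'|+1}\sum_i\binom{\mathbf{m}}{\mathbf{n}'+e_i}(n'_i+1).
\]
The trailing contribution gives $(-1)^{|\mathbf{m}|+|\mathbf{n}'|}\binom{\mathbf{m}}{\mathbf{n}'}(|\mathbf{m}|-|\mathbf{n}'|)$. The identity $\binom{m_i}{n'_i+1}(n'_i+1)=\binom{m_i}{n'_i}(m_i-n'_i)$, summed over $i$, shows that these two contributions cancel. Therefore $\sec(\QB^0)\subset\kernel$.

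Next I show that $\pi_0$ is injective on $\kernel$. Suppose $x\in\kernel\cap\QB b_0$, so $x=\sum_{j\geq1}x_jb_0^j$ with $x_j\in\QB^0$. Since $\gamma_0$ maps $\QB^0$ into $\QB^0$ (it removes a $b_0$ that is never final), the $\QB^0 b_0^{j}$-component of $\gamma_0(x)$ is
\[
\gamma_0(x_j)+(j+1)x_{j+1}.
\]
Suppose $x\neq0$ and let $r\geq1$ be minimal with $x_r\neq 0$. Then $x_{r-1}=0$, so the component in degree $j=r-1$ equals $r\,x_r$. This must vanish, which forces $x_r=0$, a contradiction. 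Hence $x=0$, and $\pi_0$ is injective on $\kernel$.

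Finally, for $x\in\kernel$, the element $\sec(\pi_0(x))-x$ lies in $\kernel$ by the first step. By (a) it is also killed by $\pi_0$, so it lies in $\QB b_0$. By the injectivity just proved it vanishes, giving $\sec\circ\pi_0=\id_{\kernel}$.

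The only delicate step is the bookkeeping in the binomial cancellation for $\gamma_0\circ\sec=0$. The injectivity argument and part (a) are formal.
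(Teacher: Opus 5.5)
Your proof is correct and complete. The paper gives no argument of its own here; it simply quotes \cite[Proposition 5.2]{Bu25}. So your write-up is a self-contained verification: (a) by inspection, then (b) from $\sec(\QB^0)\subset\kernel$ together with the injectivity of $\pi_0$ on $\kernel$, which is the natural route.

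One streamlining is worth noting. By the Leibniz rule,
\[
\gamma_0^i\big(w(\overline{\mathbf{m}})\big)=i!\sum_{\mathbf{n}\leq\mathbf{m},\ |\mathbf{m}|-|\mathbf{n}|=i}\binom{\mathbf{m}}{\mathbf{n}}\,w(\overline{\mathbf{n}}),
\]
so the definition of $\sec$ can be rewritten as
\[
\sec(w)=\sum_{i\geq0}\frac{(-1)^i}{i!}\gamma_0^i(w)\,b_0^i .
\]
This is exactly the formula used for $\QX$ in Lemma~\ref{lem:sec_piY}. In this form $\gamma_0\circ\sec=0$ becomes a one-line telescoping, since
\[
\gamma_0\big(\gamma_0^i(w)b_0^i\big)=\gamma_0^{i+1}(w)\,b_0^i+i\,\gamma_0^i(w)\,b_0^{i-1}.
\]
Your multi-index binomial cancellation is precisely this telescoping written out coordinatewise.

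Your injectivity argument is clean: you look at the component of $\gamma_0(x)$ just below the lowest nonzero $b_0$-tail of $x$. It needs only $\gamma_0(\QB^0)\subset\QB^0$ and division by integers, and you do not need the description $\kernel=\Q\langle\ad_{b_0}^m(b_k)\rangle$ at all.
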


Let $\rho : \QB^0 \to \QB^0$ be the $\Q$-linear endomorphism given by
\[\rho(b_0^{m_1}b_{k_1}\cdots b_0^{m_d}b_{k_d})=\hspace{-0.3cm}\sum_{\substack{l_1+\cdots+l_d=k_1+\cdots+k_d \\ n_1+\cdots+n_d=m_1+\cdots+m_d \\ l_s\geq1,\ n_s\geq0}} \hspace{-0.1cm} (-1)^{l_d+n_d-1}\prod_{s=1}^{d-1} \binom{k_s-1}{l_s-1}\binom{m_s}{n_s} b_0^{n_d}b_{l_d}b_0^{n_1}b_{l_1}\cdots b_0^{n_{d-1}}b_{l_{d-1}}.\]
The map $\rho$ is essentially a composition of $\tau$ and the antipode
\[S:\QB\to \QB,\quad b_{s_1}\cdots b_{s_r}\mapsto (-1)^r b_{s_r}\cdots b_{s_1}\]
of the Hopf algebra $(\QB,\Delta_B)$. Precisely, consider the composition $S_0=\pi_0\circ S\circ \sec$. Then, for any $w\in \QB^0$ we have
\begin{align*}
\rho(w)=(-1)^{\operatorname{wt}(w)+\operatorname{dep}(w)} \big(S_0\circ\tau\circ S_0\circ \tau\big)(w).
\end{align*}
\begin{proposition} \label{prop:lq_rho_inv} \cite[Proposition 5.4]{Bu25}
If $\psi\in\lqq$, then we have 
\[\rho(\pi_0(\psi))=\pi_0(\psi).\]
\end{proposition}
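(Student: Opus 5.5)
The plan is to reduce the statement to the factorization $\rho=(-1)^{\operatorname{wt}+\operatorname{dep}}\,S_0\circ\tau\circ S_0\circ\tau$ recalled just above. All maps involved, namely $\pi_0$, $\sec$, $S$, $\tau$ and $\rho$, preserve weight and depth, and $\lqq$ is bigraded. We may therefore assume throughout that $\psi\in\lqq$ is homogeneous of bidegree $(m,n)$. In that case $\rho(\pi_0(\psi))=(-1)^{m+n}\,S_0\tau S_0\tau(\pi_0(\psi))$.

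\textbf{Step 1: $\psi\in\kernel$.} By (ii) $\psi\in\Lie(B)$, and by (i) it has no $b_0$-coefficient. By Lazard elimination, $\Lie(B)=\Q b_0\oplus\Lie\big(\ad_{b_0}^m(b_k)\mid m\geq0,\ k\geq1\big)$. Hence $\psi$ lies in the second summand, which is contained in $\kernel=\Q\langle \ad_{b_0}^m(b_k)\rangle$. By Proposition \ref{prop:sec_pi0_inverse}(b) it follows that $\sec(\pi_0(\psi))=\psi$.

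\textbf{Step 2: compute $S_0(\pi_0(\psi))$.} Since $\psi$ is primitive, the antipode gives $S(\psi)=-\psi$. Therefore
\[
S_0(\pi_0(\psi))=\pi_0 S\sec\pi_0(\psi)=\pi_0(S(\psi))=-\pi_0(\psi).
\]

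\textbf{Step 3: chain the maps.} Condition (iii) says $\tau(\pi_0(\psi))=\pi_0(\psi)$. Using this together with Step 2 repeatedly gives
\[
S_0\tau S_0\tau(\pi_0(\psi))=S_0\tau(-\pi_0(\psi))=-S_0(\pi_0(\psi))=\pi_0(\psi),
\]
so $\rho(\pi_0(\psi))=(-1)^{m+n}\pi_0(\psi)$.

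\textbf{Step 4: the sign, which I expect to be the main obstacle.} It remains to show $(-1)^{m+n}=1$ whenever $\pi_0(\psi)\neq0$. Equivalently, we need a parity result: homogeneous components of $\lqq$ with $m+n$ odd vanish.

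First I would re-check the normalization of the signs. The word-length sign of $S$ counts the $b_0$'s, and these contribute weight but not depth. So the sign $(-1)^{\operatorname{wt}+\operatorname{dep}}$ in the factorization of $\rho$ may already be absorbing exactly this discrepancy. If so, a careful bookkeeping of how $S_0$ and $\tau$ act on signs would make the sign disappear outright, and I would try this first.

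If a genuine parity statement is needed, the base case is depth $1$. There (iv) kills all $b_0^mb_k$ with $m+k$ even, so only components with odd weight and depth $1$ survive, and $m+n$ is even there.

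For higher depth I would mimic the classical parity argument for $\ls$, done in the depth-graded, linearized setting. The idea is to combine the two involutions $\tau$ and $\pi_0\circ S\circ\sec$ on $\pi_0(\lqq)$. Their composite is, up to the sign $(-1)^{\operatorname{wt}+\operatorname{dep}}$, the map $\rho$. One then checks on the explicit binomial formula for $\rho$ that $\rho-\id$ is injective on primitive elements of odd total degree $m+n$, by looking at the coefficient of the last letter. This forces those components to vanish.
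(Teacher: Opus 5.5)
\textbf{The gap is Step 4, and it is the whole content of the proposition.}

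Your Steps 1--3 are correct. For $\psi\in\lqq$ homogeneous of bidegree $(m,n)$ they give $\rho(\pi_0(\psi))=(-1)^{m+n}\pi_0(\psi)$. Since $\psi=\sec(\pi_0(\psi))$, this shows the proposition is \emph{equivalent} to the vanishing of $\lqq[m,n]$ for $m+n$ odd. So Step 4 is not a side issue, and it is left unproved.

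Neither of your suggested routes closes it:
\begin{itemize}
\item The sign cannot be normalized away. In depth one, $S_0\tau S_0\tau=\id$, because $\sec(b_0^{k-1}b_{m+1})=\ad_{b_0}^{k-1}(b_{m+1})$ is primitive. Yet $\rho(b_0^mb_k)=(-1)^{m+k-1}b_0^mb_k$, so in depth one the statement is exactly condition (iv).
\item Your higher-depth plan is circular. Composing the involutions $\tau$ and $S_0$ only reproduces Step 3.
\item Injectivity of $\rho-\id$ is the wrong condition. On an offending component $\rho$ acts by $-1$ on $\pi_0(\psi)$, so you would need $-1$ not to be an eigenvalue there, which is the claim itself.
\end{itemize}

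More seriously, Steps 1--3 extract only two facts from $\psi$: $\tau(\pi_0(\psi))=\pi_0(\psi)$ and $S_0(\pi_0(\psi))=-\pi_0(\psi)$. These two facts cannot imply the statement in odd depth. For instance, in weight $4$ and depth $3$ the element
\[
w=b_2b_1b_1-b_1b_2b_1+b_1b_1b_2+b_0b_1b_1b_1-b_1b_0b_1b_1+b_1b_1b_0b_1
\]
satisfies $\tau(w)=w$ and $S_0(w)=-w$, yet $\rho(w)=-w$. Necessarily $w\notin\pi_0(\Lie(B))$, which is consistent with $\lqq[4,3]=0$. So a proof must use that $\psi$ is a genuine Lie element, i.e.\ all shuffle relations and not only the antipode identity $S(\psi)=-\psi$. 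Alternatively it must rely on an independently proved parity theorem for $\lqq$. The paper does not reprove the statement; it imports it from \cite[Proposition 5.4]{Bu25}.

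Your argument does yield more than depth one. In terms of generating polynomials, $\rho$ acts on depth $n$ as a cyclic permutation of $n+1$ translation-invariant variables, so $\rho^{n+1}=\id$ there. Step 3 then gives $\pi_0(\psi)=(-1)^{(m+n)(n+1)}\pi_0(\psi)$, which settles every even depth $n$. Odd depth $n\geq 3$ remains open in your proposal.
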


For $w=b_0^{m_1}b_{k_1}\cdots b_0^{m_d}b_{k_d}b_0^{m_{d+1}}\in\QB$, we set
\begin{align*}
\partial_w^R(b_0)&=\partial_w^L(b_0)=0,\\
\partial_w^R(b_a)&=\sum_{\mathbf{l}\leq \mathbf{k}} (-1)^{|\mathbf{k}|+|\mathbf{l}|}\binom{\mathbf{k}-1}{\mathbf{l}-1} b_{a+|\mathbf{k}|-|\mathbf{l}|}w(\mathbf{l}), \\
\partial_w^L(b_a)&=\sum_{\mathbf{l}\leq \mathbf{k}} (-1)^{|\mathbf{k}|+|\mathbf{l}|}\binom{\mathbf{k}-1}{\mathbf{l}-1} w(\mathbf{l})b_{a+|\mathbf{k}|-|\mathbf{l}|}.
\end{align*}
In particular, we have
\begin{align} \label{eq:partial_LR}
\partial_w=\partial_w^R-\partial_w^L. 
\end{align}
Moreover, we define for any $w\in\QB^0$ the map $\partial_{w}^0:\QB^0\to\QB^0$ by
\begin{align} \label{eq:def_partial0}
\partial_{w}^0(\pi_0(v))=\pi_0(\partial_{\operatorname{sec}(w)}(v)), \qquad v\in \QB.
\end{align}
Similarly, we define maps $\partial_w^{R,0},\partial_w^{L,0}:\QB^0\to\QB^0$.
\begin{proposition} \label{prop:tau_partial} \cite[Proposition 5.5]{Bu25} For non-empty words $v,w\in\QB^0$, we have
\begin{enumerate}[leftmargin=*, label=(\alph*)]
\item $\big(\tau\circ \partial_{\tau(w)}^{R,0}\circ\tau\big)(v)= \partial_w^{R,0}(v)+\operatorname{sec}(w)v-\tau\big(\operatorname{sec}(\tau(w))\tau(v)\big)$,
\item $\big(\tau\circ \partial_{\tau(w)}^{L,0}\circ \tau\big)(v)= \partial_{\rho(w)}^{L,0}(v)$.
\end{enumerate}
\end{proposition}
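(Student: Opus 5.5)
The plan is to reduce both identities to a computation on the free generators $b_0^m b_k$ ($m\geq0$, $k\geq1$) of the subalgebra $\QB^0$. By linearity in $w$ I may assume $w=b_0^{m_1}b_{k_1}\cdots b_0^{m_d}b_{k_d}$ is a single word.

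\textbf{Step 1: the operators on $\QB^0$.} I first describe $\partial_w^{R,0}$ and $\partial_w^{L,0}$ directly on $\QB^0$. The operators $\partial^R_{\sec(w)}$ and $\partial^L_{\sec(w)}$ are extended from the generators to all of $\QB$ as derivations. They kill $b_0$, so they preserve $\QB b_0$, and hence the maps in \eqref{eq:def_partial0} are well defined.

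Using Proposition \ref{prop:sec_pi0_inverse}, $\sec(w)\in\kernel$ is a polynomial in the $\ad_{b_0}^m(b_k)$. Expanding $\sec(w)$ and each $\sec(w)(\mathbf l)$, then applying $\pi_0$, gives closed formulas of the form
\[
\partial_w^{L,0}(b_0^m b_k)=\sum (\pm)\binom{\mathbf m}{\mathbf n}\binom{\mathbf k-1}{\mathbf l-1}\, b_0^{\ast}\, w(\mathbf l)(\overline{\mathbf n})\, b_{\ast},
\]
and similarly for $\partial_w^{R,0}$. In the $R$-version, $b_{k+|\mathbf k|-|\mathbf l|}$ sits to the left of the inserted word. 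Here the $b_0$-exponents are redistributed by binomial coefficients, which is the effect of $\sec$ followed by $\pi_0$. I also record how $\partial^{R,0},\partial^{L,0}$ act on a product $uv$ with $u,v\in\QB^0$. Since $\pi_0(xv)=\pi_0(x)v$ whenever $v\in\QB^0$ is nonempty, they are still derivations of the algebra $\QB^0$.

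\textbf{Step 2: reduction to generators.} Since $\tau$ is an antiautomorphism and an involution, $D\mapsto\tau\circ D\circ\tau$ sends derivations of $\QB^0$ to derivations. Hence for (b) both sides are derivations of $\QB^0$, and it suffices to check (b) on $v=b_0^mb_k$.

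For (a) the right-hand side is not a derivation, so I compare Leibniz defects instead. Put $E(v)=\sec(w)v-\tau\big(\sec(\tau(w))\tau(v)\big)$. Since the left side of (a) and $\partial_w^{R,0}$ are both derivations, (a) on products follows once $E(uv)=E(u)v+uE(v)$ holds modulo the generator identities. This is the consistency check I would carry out. It should come down to the fact that $\pi_0(\sec(w)\,\cdot)$ is left multiplication, which is compatible with $\tau$ reversing products. If this check fails as stated, I would instead prove (a) by induction on the number of generators in $v$, using the generator case directly.

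\textbf{Step 3: the generator identities.} First, $\tau(b_0^mb_k)=b_0^{k-1}b_{m+1}$. Inserting this into the formulas of Step 1 and applying $\tau$ again, which reverses the generators and swaps the roles of $(m,k-1)$, the identities become equalities between two explicit binomial sums of words.

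For (a), the $R$-operator places $w(\mathbf l)$ to the right of the new letter. After conjugation by $\tau$ it lands on the left. The difference from $\partial^{R,0}_w$ consists exactly of the boundary terms $\mathbf l=\mathbf k$ and $\mathbf n=\mathbf m$, which I expect to assemble into $\sec(w)v$ and $\tau(\sec(\tau w)\tau v)$.

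For (b), conjugating the $L$-operator moves the last block $b_0^{n_d}b_{l_d}$ of the inserted word to the front. It also produces the sign $(-1)^{l_d+n_d-1}$ and the binomials $\binom{k_s-1}{l_s-1}\binom{m_s}{n_s}$ for $s<d$. This matches the definition of $\rho$, so I would identify the two sums term by term after a Vandermonde-type resummation in the last block.

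I expect Step 3(b) to be the main obstacle. The difficulty is matching the index conventions: the constraint $\sum l_s=\sum k_s$, $\sum n_s=\sum m_s$ in $\rho$, and the cyclic shift, against the double binomial expansion coming from $\sec$ and $\tau$. I would first verify the identity for $d=1$ and small $m,k$, and then do the general resummation blockwise.
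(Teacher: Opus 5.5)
\textbf{There is a genuine gap in part (a).} Step 2 reduces (a) to generators by using your Step 1 claim that $\partial^{R,0}_w$ is a derivation of $\QB^0$. That claim is false, and so is the reason you give for it. For nonempty $v\in\QB^0$ one has $\pi_0(xv)=xv$, not $\pi_0(x)v$; take $x=b_0$, $v=b_1$. Because $\sec(w)$ carries the tails $b_0^{|\mathbf m|-|\mathbf n|}$, the element $\partial^R_{\sec(w)}(b_a)$ contains words ending in $b_0$. When such a word is followed by another generator, $\pi_0$ no longer kills it. For $w=b_0b_1$ one has $\sec(w)=b_0b_1-b_1b_0$ and $\partial^{R,0}_w(b_a)=b_ab_0b_1$, but
\[
\partial^{R,0}_w(b_ab_c)-\partial^{R,0}_w(b_a)\,b_c-b_a\,\partial^{R,0}_w(b_c)=-b_ab_1b_0b_c\neq 0 .
\]
The correct rule, for generators $g_i=b_0^{p_i}b_{q_i}$ of $\QB^0$, is
\[
\partial^{R,0}_w(g_1\cdots g_r)=\sum_{i<r}g_1\cdots g_{i-1}\,\partial^R_{\sec(w)}(g_i)\,g_{i+1}\cdots g_r+g_1\cdots g_{r-1}\,\pi_0\partial^R_{\sec(w)}(g_r).
\]
For the same reason, the left side of (a) is not a derivation once $\sec(\tau(w))$ has $b_0$-tails, for instance when $w=b_2$. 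Your consistency check therefore has to fail. In the example above, $E$ has Leibniz defect $+b_ab_1b_0b_c$, and this is exactly what cancels the defect of $\partial^{R,0}_w$. For (b) your conclusion is right but for a different reason: $\partial^L_{\sec(w)}$ sends every $b_a$ to words ending in some $b_{\ge 1}$, so it preserves $\QB^0$, and $\partial^{L,0}_w$ is just its restriction.

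\textbf{The missing idea carries all of (a).} A tail $b_0^j$ of $\sec(\tau(w))$ merges into the first generator of $\tau(v)$; after applying $\tau$, it raises the index of the last letter of $v$ by $j$. This gives
\[
\tau\big(\sec(\tau(w))\,\tau(v)\big)=g_1\cdots g_{r-1}\,\pi_0\partial^R_{\sec(w)}(g_r),
\]
and symmetrically $\tau\partial^{R,0}_{\tau(w)}\tau(g)=\sec(w)g$ for a single generator $g$. So on generators (a) is essentially automatic, and this does not match your Step 3 picture of boundary terms $\mathbf l=\mathbf k$, $\mathbf n=\mathbf m$. Checking generators, or running your fallback induction (which has no correct product rule to induct with), never reaches the actual content. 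What you need is the twisted rule above for both sides. With it, the right side of (a) becomes $\sec(w)v+\sum_{i<r}g_1\cdots\partial^R_{\sec(w)}(g_i)\cdots g_r$. Expanding the left side the same way reduces (a) to the two-generator identity
\[
\tau\big(\partial^R_{\sec(\tau(w))}(\tau(g'))\,\tau(g)\big)=\partial^R_{\sec(w)}(g)\,g'.
\]
This identity follows because the explicit double binomial expansion of $\partial^R_{\sec(w)}(g)\,g'$ is symmetric under reversing words and swapping each pair ($b_0$-exponent, index minus one). For (b), the reduction to generators is valid, but the resummation against $\rho$ is still only announced, not carried out.
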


\begin{proof}[Proof of Proposition \ref{prop:lq_subset_stab}]
We prove that for $\psi \in \lqq$ the following equality of $\Q$-linear maps from $\QB$ to $\QB^0$
\begin{align*} 
\tau \circ \sigma_\psi^0 \circ \tau \circ \pi_0 = \pi_0 \circ \sigma_\psi.
\end{align*}
Thanks to the surjectivity of $\pi_0$, this directly implies $\sigma_\psi^0 \circ \tau = \tau \circ \sigma_\psi^0$ and hence the claimed inclusion $\lqq \subset \stab_{\Lie(B)}(\tau)$. If $v \in \QB b_0$, then
\[
    \tau \circ \sigma_\psi^0 \circ \tau \circ \pi_0(v) = 0 = \pi_0 \circ \sigma_\psi(v).
\]
Let $v$ be a non-empty word in $\QB^0$. As $\tau\circ \pi_0=\pi_0\circ \tau$ on $\QB^0$ and by definition of $\sigma_\psi ^0$ given in \eqref{eq:diagram_sigma_0}, we have
\begin{align*}
\big(\tau\circ \sigma_\psi^0\circ\tau\circ\pi_0\big)(v)&=\big(\tau\circ \sigma_\psi^0\circ\pi_0\circ\tau\big)(v)=\big(\tau\circ\pi_0\circ\sigma_\psi\circ\tau\big)(v).
\end{align*}
Thus, we get with \eqref{eq:def_sigma} and \eqref{eq:partial_LR}
\begin{align*}
&\big(\tau\circ \sigma_\psi^0\circ\tau\circ\pi_0\big)(v)=(\tau\circ\pi_0\circ\ell_\psi)(\tau(v))+\big(\tau\circ\pi_0\circ \partial_\psi^R\big)(\tau(v))-\big(\tau\circ \pi_0\circ \partial_\psi^L\big)(\tau(v)).
\end{align*}
By Proposition \ref{prop:sec_pi0_inverse}, we have for $\psi\in\lqq$ that $\sec(\pi_0(\psi))=\psi$. So by definition of $\partial_\psi^{R,0}$, $\partial_\psi^{L,0}$ as in \eqref{eq:def_partial0} we get
\begin{align*}
\big(\tau\circ \sigma_\psi^0\circ\tau\circ\pi_0\big)(v)&=\big(\tau\circ\pi_0\big)(\psi\tau(v))+\big(\tau\circ\partial_{\pi_0(\psi)}^{R,0}\big)(\tau(v))-\big(\tau\circ \partial_{\pi_0(\psi)}^{L,0}\big)(\tau(v))\\
&=\tau\big(\psi\tau(v)\big)+\partial_{\pi_0(\psi)}^{R,0}(v)+\sec(\pi_0(\psi))v-\tau\big(\sec(\pi_0(\psi))\tau(v)\big)-\partial_{\rho(\pi_0(\psi))}^{L,0}(v),
\end{align*}
where the second equality follows from Proposition \ref{prop:tau_partial}. Applying again Proposition \ref{prop:sec_pi0_inverse} and using the $\rho$-invariance of $\pi_0(\psi)$ (cf Proposition \ref{prop:lq_rho_inv}), we obtain
\begin{align*}
\big(\tau\circ \sigma_\psi^0\circ\tau\circ\pi_0\big)(v)&=\tau\big(\psi\tau(v)\big)+\partial_{\pi_0(\psi)}^{R,0}(v)+\psi v-\tau\big(\psi\tau(v)\big)-\partial_{\pi_0(\psi)}^{L,0}(v) \\
&=\partial_{\pi_0(\psi)}^0(v)+\ell_\psi(v)\\
&=\big(\pi_0\circ \partial_\psi\big)(v)+\ell_{\psi}(v)\\
&=(\pi_0\circ\sigma_\psi\big)(v). \qedhere
\end{align*}
\end{proof}

The next result is nearly the converse statement of Proposition \ref{prop:lq_subset_stab}.

\begin{lemma} \label{lem:stab_subset_tau_inv}
We have the following equality of subspaces of $\Lie(B)$
\[
\stab_{\Lie(B)}(\tau)\subset\{\psi\in\Lie(B)\mid \tau(\pi_0(\psi))=\pi_0(\psi)\}.
\]
\end{lemma}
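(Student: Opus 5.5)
We follow the strategy of Lemma \ref{lem:stab_subset_libY}: evaluate the commutation identity defining the stabilizer at the unit $1 \in \QB^0$. Let $\psi \in \stab_{\Lie(B)}(\tau)$, so that
\[
\sigma^0_\psi \circ \tau = \tau \circ \sigma^0_\psi
\]
as endomorphisms of $\QB^0$. We apply both sides to the empty word $1$.

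First, $\tau$ is an algebra antiautomorphism of $\QB^0$, so $\tau(1) = 1$. Next we compute $\sigma^0_\psi(1)$. By the commutative diagram \eqref{eq:diagram_sigma_0} and $\pi_0(1) = 1$, we get $\sigma^0_\psi(1) = \pi_0(\sigma_\psi(1))$. By \eqref{eq:def_sigma}, $\sigma_\psi(1) = \ell_\psi(1) + \partial_\psi(1)$. Since $\partial_\psi$ is a derivation, it vanishes on $1$, while $\ell_\psi(1) = \psi$. Hence $\sigma^0_\psi(1) = \pi_0(\psi)$.

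Putting these together, the identity evaluated at $1$ gives
\[
\pi_0(\psi) = \sigma^0_\psi(\tau(1)) = \tau(\sigma^0_\psi(1)) = \tau(\pi_0(\psi)),
\]
which is exactly the claimed condition. There is no real obstacle here. The only care needed is to note that $\partial_\psi$ is defined as a derivation for general $\psi$, including the special case $w = b_0^m$, extended linearly, so that $\partial_\psi(1) = 0$ holds in all cases.
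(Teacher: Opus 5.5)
Your proposal is correct and is essentially the paper's proof: the paper also evaluates both sides of $\sigma^0_\psi\circ\tau=\tau\circ\sigma^0_\psi$ at $1$, using $\tau(1)=1$ and $\sigma^0_\psi(1)=\pi_0(\ell_\psi(1)+\partial_\psi(1))=\pi_0(\psi)$, to conclude $\pi_0(\psi)=\tau(\pi_0(\psi))$.
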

\begin{proof}
For $\psi\in\Lie(B)$, we compute
\begin{align}
\label{eq:stab_subset_lq_1}
\sigma_\psi^0\circ\tau(1)&=\sigma_\psi^0(1)=\pi_0\circ\sigma_\psi(1)=\pi_0\circ\ell_\psi(1)+\pi_0\circ\partial_\psi(1)=\pi_0(\psi),  \\
\label{eq:stab_subset_lq_2}
\tau\circ\sigma_\psi^0(1)&= \tau\circ\pi_0\circ\ell_\psi(1)+\tau\circ\pi_0\circ\partial_\psi(1)=\tau(\pi_0(\psi)).
\end{align}
For $\psi\in\stab_{\Lie(B)}(\tau)$, \eqref{eq:stab_subset_lq_1} and \eqref{eq:stab_subset_lq_2} agree by definition and hence we get the desired inclusion.
\end{proof}

\begin{theorem} \label{thm:stab_equal_lq} We have the following equality of bigraded subspaces of $\Lie(B)$
\[
\stab_{\Lie(B)}(\tau)=\lqq\oplus \Q b_0.
\]
\end{theorem}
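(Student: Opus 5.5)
The plan mirrors the proof of Theorem \ref{thm:stab_equals_ls}: compare both sides bidegree by bidegree, using the two available inclusions. By Proposition \ref{prop:lq_subset_stab} we have $\lqq\subset\stab_{\Lie(B)}(\tau)$. By Lemma \ref{lem:stab_subset_tau_inv} we have $\stab_{\Lie(B)}(\tau)\subset\{\psi\in\Lie(B)\mid\tau(\pi_0(\psi))=\pi_0(\psi)\}$. Both sides are bigraded, since $\sigma_\psi$, $\pi_0$ and $\tau$ are homogeneous for weight and depth. Elements of $\Lie(B)$ automatically satisfy condition (ii). So the work reduces to conditions (i) and (iv) of Definition \ref{def:lq}, which only concern depth $1$ and the letter $b_0$.

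\textbf{Weight-one and $b_0$ part.} Since $\pi_0(b_0)=0$, the element $b_0$ trivially satisfies the $\tau$-condition. I will check directly that $b_0\in\stab_{\Lie(B)}(\tau)$. By Lemma \ref{lem:Ari_with_b0}, $\sigma_{b_0}=\ell_{b_0}+\partial_{b_0}$, and $\partial_{b_0}(\phi)=[\phi,b_0]$ on generators, hence on all of $\QB$. So $\sigma_{b_0}(v)=b_0v+vb_0-b_0v=vb_0$, and therefore $\sigma^0_{b_0}=\pi_0\circ r_{b_0}=0$ on $\QB^0$, which trivially commutes with $\tau$. In bidegree $(1,0)$, $\Lie(B)[1,0]=\Q b_0$ and $\lqq[1,0]=0$ by (i). Hence $\stab[1,0]=\Q b_0=\lqq[1,0]\oplus\Q b_0$.

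\textbf{Depth $\geq2$.} In bidegrees $(m,n)$ with $n\ge2$, an element of $\Lie(B)$ has no coefficients on $b_0$ or on words $b_0^mb_k$ (these have depth $\le1$). So conditions (i) and (iv) are vacuous, and $\lqq[m,n]$ coincides with the right-hand side of Lemma \ref{lem:stab_subset_tau_inv} in that bidegree. The two inclusions then give equality.

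\textbf{Depth one, the main obstacle.} A basis of $\Lie(B)[m,1]$ is $\ad_{b_0}^{j}(b_k)$ with $j+k=m$. Using $\sec\circ\pi_0=\id$ on $\kernel$, we get $\pi_0(\ad_{b_0}^j(b_k))=b_0^jb_k$, and $\tau(b_0^jb_k)=b_0^{k-1}b_{j+1}$. So the $\tau$-invariant subspace is spanned by symmetrizations $\ad_{b_0}^j(b_k)+\ad_{b_0}^{k-1}(b_{j+1})$, where only the total $j+k=m$ matters. When $m$ is odd, condition (iv) (which needs $j+k$ even) is vacuous, and equality follows from the two inclusions as in Case 4 of the earlier proof. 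When $m$ is even, $\lqq[m,1]=0$. Here I must show that no nonzero $\tau$-symmetric $\psi=\sum_j c_j\ad_{b_0}^j(b_{m-j})$ lies in the stabilizer. My plan is to test the commutation $\sigma^0_\psi\tau(v)=\tau\sigma^0_\psi(v)$ on a short word such as $v=b_1$, whose image is $\tau(b_1)=b_1$. I will compute $\sigma^0_\psi(b_1)=\pi_0(\psi b_1+\partial_\psi(b_1))$ explicitly: $\partial_\psi(b_1)$ only has terms with $\mathbf l=\mathbf k$, giving $[b_1,\psi]$ up to the structure of $\partial_w$ on depth-one words. Then I will extract the depth-two component and compare it with its $\tau$-image. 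The expected outcome is a linear condition analogous to Lemma \ref{lem:nonvanishing_sum}, a binomial sum with signs $(-1)^{j}+(-1)^{m-j}$, which survives exactly for $m$ even and forces all $c_j=0$. If $v=b_1$ does not separate, I will instead try $v=b_2$ or $v=b_0b_1$.

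This parity computation is the delicate step. I also expect to need an intermediate check that $\pi_0$ of a depth-one Lie element is determined by its coefficients on $b_0^jb_k$, which again comes from Proposition \ref{prop:sec_pi0_inverse}. Finally, summing over all bidegrees gives the equality of bigraded spaces.
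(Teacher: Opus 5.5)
\textbf{Verdict: correct framework, but the key case is not proved.} Your architecture is exactly the paper's:
\begin{itemize}
\item the $(1,0)$ component via $\sigma_{b_0}(v)=vb_0$;
\item every component of depth $\geq2$, and the odd-weight depth-one components, via the sandwich between Proposition \ref{prop:lq_subset_stab} and Lemma \ref{lem:stab_subset_tau_inv};
\item for even weight in depth one, a test of the commutation on $v=b_1$, which is precisely the test word the paper uses.
\end{itemize}
The gap is that this last case, the only non-formal part of the theorem, is not proved. You give an expected outcome and fallback words. Worse, the one concrete simplification you offer is false.

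It is not true that $\partial_\psi(b_1)$ only has the terms with $\mathbf l=\mathbf k$, i.e.\ equals $[b_1,\psi]$. That holds only for $\psi\in\Lie(b_0,b_1)$ (Remark \ref{rem:bracket_on_b0_b1}). For $\psi=\sum_{k=1}^m\lambda_k\ad_{b_0}^{m-k}(b_k)$ one has
\[
\partial_\psi(b_1)=\sum_{k=1}^m\lambda_k\sum_{l=1}^{k}(-1)^{k+l}\binom{k-1}{l-1}\bigl[b_{1+k-l},\ad_{b_0}^{m-k}(b_l)\bigr].
\]
Keeping only $l=k$ would give $\sigma_\psi(b_1)=b_1\psi$, hence $\sigma^0_\psi(b_1)=b_1\pi_0(\psi)$. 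For $\tau$-symmetric $\psi$ this gives $\tau\sigma^0_\psi(b_1)=\tau(\pi_0(\psi))\,b_1=\pi_0(\psi)\,b_1$. The test would then force $\pi_0(\psi)=0$ for every $m\geq2$, odd $m$ included. That contradicts $\lqq[m,1]\subset\stab_{\Lie(B)}(\tau)$; for instance $\ad_{b_0}(b_2)\in\lqq[3,1]$. So the parity dependence you expect lives entirely in the terms with $l<k$, and it has to be computed, not predicted.

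\textbf{How the paper finishes.} It compares the coefficients of the words $b_0^{t_1}b_sb_0^{t_2}b_1$, with $t_1\geq1$ and $s+t_1+t_2=m$, on both sides of $\sigma^0_\psi(b_1)=\tau\sigma^0_\psi(b_1)$.
\begin{itemize}
\item In $\sigma^0_\psi(b_1)$ these coefficients vanish identically. The term $\psi b_1$ together with the $l=k$ terms gives $b_1\pi_0(\psi)$, which starts with $b_1$. For $l<k$, the two halves of $\pi_0\bigl[b_{1+k-l},\ad_{b_0}^{m-k}(b_l)\bigr]$ start, respectively end, with $b_{1+k-l}$, where $1+k-l\geq2$.
\item In $\tau\sigma^0_\psi(b_1)$ the coefficient is $\bigl(1+(-1)^m\bigr)(-1)^{t_2}\binom{t_1+t_2}{t_1}\lambda_{t_1+t_2+1}$.
\end{itemize}
For even $m$ this gives $\lambda_2=\cdots=\lambda_m=0$. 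Then $\lambda_1=0$ because $\tau(b_0^{m-1}b_1)=b_m$, or equivalently by your symmetry condition.

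So $v=b_1$ does separate, and your guessed sign $(-1)^{t_2}+(-1)^{m-t_2}$ is indeed what appears. What your proposal lacks is this computation, in particular the choice of words on which one side vanishes identically.
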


\begin{proof} The space $\Lie(B)$ is bigraded by weight and depth, where $b_0$ is of bidegree $(1,0)$ and $b_i$ is of bidegree $(i,1)$ for $i\geq1$. We consider several cases depending on the bidegree $(m,n)$.

\begin{caselist}
\item $(m,n)=(1,0)$. We show that
\begin{align} \label{eq:stab_equal_lq_(1,0)}
\stab_{\Lie(B)}(\tau)[1,0]=\lqq[1,0]\oplus\Q b_0.
\end{align}
By definition of $\lqq$, we have $\lqq[1,0]=0$. On the other hand, we have $\stab_{\Lie(B)}(\tau)[1,0]\subset \Lie(B)[1,0]=\Q b_0$. We compute $\sigma_{b_0}(w)=\ell_{b_0}w+\partial_{b_0}(w)=b_0w+[w,b_0]=wb_0$. Thus, the projection of $\sigma_{b_0}(w)$ to $\QB^0$ is always zero,
\[\sigma_{b_0}^0\equiv 0.\]
In particular, $\sigma_{b_0}^0$ commutes with $\tau$ and thus $b_0\in \Lie(B)$ is contained in $\stab_{\Lie(B)}(\tau)$. We obtain equality \eqref{eq:stab_equal_lq_(1,0)}. 
\item $m\geq 2$ and $n\geq2$. We have that
\[\stab_{\Lie(B)}(\tau)[m,n]=\lqq[m,n].\]
This follows from Proposition \ref{prop:lq_subset_stab} and Lemma \ref{lem:stab_subset_tau_inv} together with the observation that $\lqq$ and $\{\psi\in\Lie(B)\mid \tau(\pi_0(\psi))=\pi_0(\psi)\}$ coincide in bidegree $(m,n)$.
\item $m \geq 2$ is odd and $n=1$. By the same argumentation as in Case 2, we get
\[
\stab_{\Lie(B)}(\tau)[m,1]=\lqq[m,1].
\]
\item $m \geq 2$ is even and $n=1$. We show that
\[
\stab_{\Lie(B)}(\tau)[m,1]=\lqq[m,1].
\]
By definition of $\lqq$, we have $\lqq[m,1]=0$. On the other hand, by construction we have $\stab_{\Lie(B)}(\tau)[m,1]\subset \Lie(B)[m,1]=\bigoplus_{k=1}^m\Q \cdot \ad_{b_0}^{m-k}(b_k)$. Let $\psi=\sum_{k=1}^m \lambda_k \ad_{b_0}^{m-k}(b_k) $ for some $\lambda_1,\ldots,\lambda_m \in \Q$, and assume that $\psi\in \stab_{\Lie(B)}(\tau)$. We show that this implies $\psi=0$. Then, we have $\stab_{\Lie(B)}(\tau)[m,1]=0$ and hence the claimed equality.

\noindent
We compute 
\begin{align} \label{eq:sigma_circ_tau}
&\sigma_{\psi}^0\circ\tau(b_1)=\sigma_\psi^0(b_1) \\
&=\sum_{k=1}^m \lambda_k\pi_0\Big(\ad_{b_0}^{m-k}(b_k) b_1+\sum_{l=1}^k(-1)^{k+l}\binom{k-1}{l-1}[b_{1+k-l},\ad_{b_0}^{m-k}(b_l)]\Big) \nonumber\\
&=\sum_{k=1}^m \lambda_k \Bigg(\sum_{n=0}^{m-k} (-1)^{m-k-n} \binom{m-k}{n} b_0^nb_kb_0^{m-k-n}b_1+\sum_{l=1}^k (-1)^{k+l} \binom{k-1}{l-1}b_{1+k-l}b_0^{m-k}b_l \nonumber\\
&\hspace{0.4cm} - \sum_{k=1}^l\sum_{n=0}^{m-k}(-1)^{m+l-n} \binom{k-1}{l-1}\binom{m-k}{n}b_0^nb_lb_0^{m-k-n}b_{1+k-l}\Bigg), \nonumber
\end{align}
and thus
\begin{align} \label{eq:tau_circ_sigma}
\tau\circ\sigma^0_\psi(b_1)&=\sum_{k=1}^m \lambda_k \Bigg(\sum_{n=0}^{m-k} (-1)^{m-k-n} \binom{m-k}{n} b_{m-k-n+1}b_0^{k-1}b_{n+1}\\
&\hspace{0.4cm}+\sum_{l=1}^k (-1)^{k+l} \binom{k-1}{l-1}b_0^{l-1}b_{m-k+1}b_0^{k-l}b_1 \nonumber\\
&\hspace{0.4cm} - \sum_{k=1}^l\sum_{n=0}^{m-k}(-1)^{m+l-n} \binom{k-1}{l-1}\binom{m-k}{n}b_0^{k-l}b_{m-k-n+1}b_0^{l-1}b_{n+1}\Bigg). \nonumber
\end{align}
As we assumed $\psi\in\stab_{\Lie(B)}(\tau)$, the two expressions \eqref{eq:sigma_circ_tau} and \eqref{eq:tau_circ_sigma} must coincide. In particular, also the coefficient of any word
\[b_0^{t_1}b_sb_0^{t_2}b_1,\qquad s+t_1+t_2=m,\ t_1\geq1,\]
must agree in both expressions. In \eqref{eq:sigma_circ_tau} this coefficient is given by
\begin{align} \label{eq:coeff_sigma_circ_tau}
 \lambda_s \Big( (-1)^{t_2}\binom{t_1+t_2}{t_1}- (-1)^{t_2}\binom{t_1+t_2}{t_1}\Big)=0,  
\end{align}
and in \eqref{eq:tau_circ_sigma} this coefficient equals
\begin{align} \label{eq:coeff_tau_circ_sigma}
\lambda_{t_1+t_2+1}\Big((-1)^{t_2} \binom{t_1+t_2}{t_1}-(-1)^{m-t_2-1}\binom{t_1+t_2}{t_1}\Big)&=\Big(1-(-1)^{m-1}\Big) \lambda_{t_1+t_2+1}(-1)^{t_2} \binom{t_1+t_2}{t_1} \nonumber\\
&=2\lambda_{t_1+t_2+1}(-1)^{t_2} \binom{t_1+t_2}{t_1} 
\end{align}
The last equality follows from $m$ being even. As \eqref{eq:coeff_sigma_circ_tau} and \eqref{eq:coeff_tau_circ_sigma} must coincide, we deduce that $\lambda_{t_1+t_2+1}=0$. As $t_1\geq1,t_2\geq 0$ were arbitrarily chosen, we get
\[\lambda_k=0 \quad \text{ for } k=2,\ldots,m,\]
and hence $\psi=\lambda_1 \ad_{b_0}^{m-1}(b_1)$. For $m\geq2$, the element $\pi_0(\ad_{b_0}^{m-1}(b_1))=b_0^{m-1}b_1$ is not $\tau$-invariant, hence we deduce from Lemma \ref{lem:stab_subset_tau_inv} that $\lambda_1=0$. Thus, we get $\psi=0$.
\end{caselist}
As the bigraded spaces $\stab_{\Lie(B)}(\tau)$ and $\lqq\oplus\Q b_0$ coincide on all components, we get the desired equality.
\end{proof}




\begin{proof}[Proof of Theorem \ref{thm:lq_Lie}] By Lemma \ref{lem:Ari_with_b0}, $\stab_{\Lie(B)}(\tau)[1,0]=\Q b_0$ is contained in the center of the Lie algebra $(\Lie(B),\{-,-\}_A)$. Thus, the projection
\[
\stab_{\Lie(B)}(\tau)\to\stab_{\Lie(B)}(\tau)[1,0]
\]
is a Lie algebra morphism where $\stab_{\Lie(B)}(\tau)[1,0]$ is considered as an abelian Lie algebra. By Theorem \ref{thm:stab_equal_lq}, we have that $\stab_{\Lie(B)}(\tau)=\lqq \oplus \Q b_0$, and hence 
\[
\ker(\stab_{\Lie(B)}(\tau)\to\stab_{\Lie(B)}(\tau)[1,0])=\lqq.
\]
We deduce that $\lqq$ is a Lie algebra as it is the kernel of a Lie algebra morphism. 
\end{proof}

\begin{corollary} We have the following inclusion of Lie subalgebras of $(\Lie(B),\{-,-\}_A)$
\[
\{\stab_{\Lie(B)}(\tau),\stab_{\Lie(B)}(\tau)\}_A\subset \lqq.
\]
\end{corollary}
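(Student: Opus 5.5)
The plan mirrors the proof of the analogous corollary for $\stab_{\Lie(X)}(\Delta_Y)$, using Theorem \ref{thm:stab_equal_lq} and Lemma \ref{lem:Ari_with_b0}.

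\textbf{Closure.} First, $\stab_{\Lie(B)}(\tau)$ is a Lie subalgebra of $(\Lie(B),\{-,-\}_A)$, since it is the stabilizer for the action \eqref{LA_act_on_tau}. Hence $\{\stab_{\Lie(B)}(\tau),\stab_{\Lie(B)}(\tau)\}_A\subset \stab_{\Lie(B)}(\tau)$. By Theorem \ref{thm:stab_equal_lq}, this stabilizer equals $\lqq\oplus\Q b_0$.

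\textbf{Reduction to brackets in $\lqq$.} Take $\psi_1=\phi_1+\lambda_1 b_0$ and $\psi_2=\phi_2+\lambda_2 b_0$ with $\phi_i\in\lqq$ and $\lambda_i\in\Q$. Bilinearity and the centrality of $b_0$ (Lemma \ref{lem:Ari_with_b0}) give
\[
\{\psi_1,\psi_2\}_A=\{\phi_1,\phi_2\}_A .
\]
This is an element of $\stab_{\Lie(B)}(\tau)=\lqq\oplus\Q b_0$.

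\textbf{Vanishing of the $b_0$-component.} It remains to show the $\Q b_0$-component is zero, and there are two ways to see it. One is to invoke Theorem \ref{thm:lq_Lie} (proved just above), which gives $\{\phi_1,\phi_2\}_A\in\lqq$ directly. The other is a self-contained weight argument. Every element of $\lqq$ has no weight-$1$ component: by condition (i), and because $\Lie(B)[1]=\Q b_0\oplus\Q b_1$ while $\lqq[1,1]=0$ by condition (iv) with $m=0$, $k=1$. The bracket $\{-,-\}_A$ is weight homogeneous, so $\{\phi_1,\phi_2\}_A$ has weight at least $4$. Its projection to the weight-$1$ component $\Q b_0$ therefore vanishes, and it lies in $\lqq$.

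No step is a real obstacle. The only point to check is that $\lqq$ has no weight-$1$ part, which is immediate from Definition \ref{def:lq}.
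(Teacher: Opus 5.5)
Your first route is the paper's proof and is correct. By Theorem \ref{thm:stab_equal_lq} and the centrality of $b_0$ (Lemma \ref{lem:Ari_with_b0}), $\{\stab_{\Lie(B)}(\tau),\stab_{\Lie(B)}(\tau)\}_A=\{\lqq,\lqq\}_A$, and this lies in $\lqq$ by Theorem \ref{thm:lq_Lie}.

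The alternative ``self-contained weight argument'', however, rests on a false claim. In condition (iv) of Definition \ref{def:lq}, taking $m=0$, $k=1$ gives $k+m=1$, which is odd, so the condition says nothing about the coefficient of $b_1$. In fact $b_1\in\lqq$:
\begin{itemize}
\item $b_1$ is primitive;
\item it has zero $b_0$-coefficient;
\item it satisfies $\tau(b_1)=b_1$.
\end{itemize}
Hence $\lqq[1,1]=\Q b_1\neq0$, and the paper uses $b_1\in\lqq$ in the proof of Proposition \ref{prop:theta_ex}. This is exactly where $\lqq$ differs from $\ls$, whose condition (i) also kills $x_1$. It is also why $\stab_{\Lie(B)}(\tau)$ has only the extra summand $\Q b_0$ rather than $\Q b_0\oplus\Q b_1$. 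So the claims that $\lqq$ has no weight-$1$ part and that the bracket has weight at least $4$ are both wrong.

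The argument is easily repaired without them:
\begin{itemize}
\item Nonzero homogeneous elements of $\Lie(B)$ have weight $\geq1$, and $\{-,-\}_A$ is additive in weight, so every bracket has weight $\geq2$.
\item $\lqq$ is bigraded with $\lqq[1,0]=0$ by condition (i). Hence the $\Q b_0$-component of an element of $\lqq\oplus\Q b_0$ is its bidegree-$(1,0)$ component.
\item That component therefore vanishes for brackets.
\end{itemize}
This repaired version is just the paper's proof of Theorem \ref{thm:lq_Lie} run again, so it is not genuinely independent of your first route.
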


\begin{proof} This follows directly from Theorem \ref{thm:stab_equal_lq} together with Lemma \ref{lem:Ari_with_b0}.
\end{proof}

\begin{remark}
There is a space $\mathfrak{bm}_0 \subset \QB$ explicitly given in \cite{Bu23}, which is conjecturally the graded dual of the indecomposables of $\qMZV/(\zeta_q(2),\zeta_q(4),\zeta_q(6))$. Hence, it is expected that $\mathfrak{bm}_0$ forms a Lie algebra and an explicit formula for a Lie bracket is also given in \cite{Bu23}. We expect that $\mathfrak{bm}_0$ also has the interpretation as a stabilizer of $\tau$ (with respect to a different Lie algebra action). We hope this can be used to give a proof that $\mathfrak{bm}_0$ is a Lie algebra.
\end{remark}

\section{Lie algebra injection between the stabilizers} \label{sec:comparison}

Define the injective $\Q$-algebra morphism
\[
    \theta_X : \QX \hookrightarrow \QB, \quad x_i \mapsto b_i, \text{ for } i \in \{0, 1\},
\]
and the injective $\Q$-algebra antimorphism
\[
    \theta_Y : \QY \hookrightarrow \QB^0, \quad y_n \mapsto b_n, \text{ for } n \in \mathbb{Z}_{>0}.
\]

Thanks to \cite[Theorem 7.10]{Bu25}, these maps induce an injective $\Q$-Lie algebra morphism
\begin{equation} \label{eq:def_theta}
\theta : (\ls, \{-, -\}) \hookrightarrow (\lqq, \{-, -\}_A), \quad \psi \mapsto \theta_X(\psi) + \theta_Y(\pi_Y(\psi)).
\end{equation}

Motivated by the inclusions $\ls \subset \stab_{\Lie(X)}(\Delta_Y)$ and $\lqq \subset \stab_{\Lie(B)}(\tau)$, we aim to construct an injective $\Q$-Lie algebra morphism $\stab_{\Lie(X)}(\Delta_Y) \hookrightarrow \stab_{\Lie(B)}(\tau)$ which extends $\theta : \ls \hookrightarrow \lqq$.

By Lemma \ref{lem:Ihara_with_x0_x1}, the space $\ls \oplus \Q x_1$ is also a $\Q$-Lie algebra for the Ihara bracket $\{-,-\}$.

\begin{proposition} \label{prop:theta_ex}
The map $\theta$ from \eqref{eq:def_theta} extends to an injective $\Q$-Lie algebra morphism
\begin{align*}
\theta^{(1)} : \big(\ls \oplus \Q x_1, \{-,-\}\big) & \to \big(\lqq, \{-,-\}_A \big), \\
\psi \oplus \lambda x_1 & \mapsto \theta(\psi) + \lambda b_1.
\end{align*}
\end{proposition}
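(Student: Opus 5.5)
The plan is to verify three things in turn: that $\theta^{(1)}$ takes values in $\lqq$, that it is injective, and that it respects brackets. Since $\theta$ is already known to be an injective Lie algebra morphism by \cite[Theorem 7.10]{Bu25}, and $x_1$ is central for the Ihara bracket by Lemma \ref{lem:Ihara_with_x0_x1}, the only genuinely new content is the behaviour of $b_1$.

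\emph{Well-definedness.} We must check $b_1\in\lqq$ against the four conditions of Definition \ref{def:lq}.
\begin{itemize}
\item[(i)] $(b_1\mid b_0)=0$ holds trivially.
\item[(ii)] $b_1$ is $\Delta_B$-primitive.
\item[(iii)] $\pi_0(b_1)=b_1=b_0^0b_1$, and $\tau(b_0^0b_1)=b_0^{0}b_1$, so $b_1$ is $\tau$-invariant.
\item[(iv)] The only word $b_0^mb_k$ with nonzero coefficient in $b_1$ is $m=0$, $k=1$, and there $k+m=1$ is odd, so condition (iv) is satisfied.
\end{itemize}
Hence $\theta(\psi)+\lambda b_1\in\lqq$. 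We stress that $\theta^{(1)}$ must be defined on $x_1$ by hand: the formula \eqref{eq:def_theta} applied to $x_1$ would give $2b_1$, and $x_1\notin\ls$ in any case.

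\emph{Injectivity.} Both $\theta_X$ and $\theta_Y$ preserve weight, since $b_n$ has weight $n$ and $y_n\mapsto x_0^{n-1}x_1$. Condition (i) of Definition \ref{def:ls} forces $\ls\subset\bigoplus_{m\ge2}\Lie(X)[m]$, so $\theta(\ls)$ lies in weight $\ge2$, while $b_1$ has weight $1$. Therefore $\theta(\ls)\cap\Q b_1=0$, and injectivity of $\theta^{(1)}$ follows from that of $\theta$.

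\emph{Bracket compatibility.} By bilinearity and the morphism property of $\theta$, it remains to show
\[
\{\theta(\psi),b_1\}_A=\theta^{(1)}(\{\psi,x_1\})=0 \qquad\text{for all }\psi\in\ls.
\]
Write $\theta(\psi)=\theta_X(\psi)+\theta_Y(g)$ with $g=\pi_Y(\psi)\in\Lie(Y)$. The two pieces are handled separately.
\begin{itemize}
\item For the first piece, $\theta_X(\psi)\in\Lie(b_0,b_1)$. By Remark \ref{rem:bracket_on_b0_b1} the bracket $\{-,-\}_A$ restricts there to the Ihara bracket, so $\{\theta_X(\psi),b_1\}_A=\theta_X(\{\psi,x_1\})=0$ by Lemma \ref{lem:Ihara_with_x0_x1}.
\item For the second piece, $\theta_Y(g)\in\Lie(b_1,b_2,\dots)$ contains no $b_0$. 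From the definition one computes $\partial_{b_1}(b_i)=[b_i,b_1]$ for $i\ge1$ and $\partial_{b_1}(b_0)=0$. Hence $-\partial_{b_1}+[-,b_1]$ is a derivation that vanishes on every $b_i$ with $i\ge1$, and so it kills $\theta_Y(g)$. This leaves
\[
\{\theta_Y(g),b_1\}_A=\partial_{\theta_Y(g)}(b_1)=\sum_{\mathbf l\le\mathbf k}(-1)^{|\mathbf k|+|\mathbf l|}\binom{\mathbf k-1}{\mathbf l-1}\,[b_{1+|\mathbf k|-|\mathbf l|},\theta_Y(g)(\mathbf l)],
\]
where the sum is extended linearly over the words of $\theta_Y(g)$.
\end{itemize}

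The main obstacle is proving that this last expression vanishes. I would first try to reduce it to the already established case. The idea is that $\theta$ intertwines brackets on $\ls$, and the proof of \cite[Theorem 7.10]{Bu25} presumably compares $\partial_{\theta(\psi)}$ with $\theta\circ d_\psi$ on generators. Applied to $b_1$, such a comparison should give
\[
\partial_{\theta(\psi)}(b_1)=\theta_X([x_1,\psi])+(\text{terms coming from }\theta_Y(g)).
\]
If the $\theta_Y$-terms there are exactly the sum above, then the identity $\{\theta(\psi),b_1\}_A=0$ is forced by centrality of $x_1$.

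Failing that, I would attack the sum directly, using two facts.
\begin{itemize}
\item Since $g$ is a Lie element, it is antisymmetric under $R_Y$ up to the sign $(-1)^{\mathrm{dep}}$.
\item Lemma \ref{lem:gamma_n} applies to $g$.
\end{itemize}
The goal would be to pair the terms indexed by $(\mathbf l,\mathbf k)$ in the commutator so that they cancel, in the spirit of the cancellation $\sum_k f_{k,k+1}=0$ used in the proof of Lemma \ref{lem:coder_difference}.
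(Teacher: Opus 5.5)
Your preliminary steps are correct:
\begin{itemize}
\item $b_1\in\lqq$;
\item injectivity, via the weight argument;
\item the splitting $\theta(\psi)=\theta_X(\psi)+\theta_Y(g)$ and the vanishing of $\{\theta_X(\psi),b_1\}_A$;
\item the reduction $\{\theta_Y(g),b_1\}_A=\partial_{\theta_Y(g)}(b_1)$.
\end{itemize}
\textbf{The gap.} The step you call the main obstacle is the whole content of the proposition, and it is not proved. Your first route is conjectural, since it rests on a guess about how the proof in \cite{Bu25} is organised.

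Your second route cannot succeed with the facts you list. Antisymmetry under $R_Y$ and Lemma \ref{lem:gamma_n} hold for every Lie element, but the vanishing does not. Take $\psi=[x_0,x_1]$, which satisfies conditions (i)--(iii) of Definition \ref{def:ls}. Then $g=y_2$, and by your own reduction
\[
\{\theta_X(\psi)+\theta_Y(g),b_1\}_A=\partial_{b_2}(b_1)=-[b_2,b_1]+[b_1,b_2]=2[b_1,b_2]\neq 0 .
\]
So any direct computation must use the parity condition (iv), and in higher depth the interplay between $\Lie(X)$ and $\Lie(Y)$. Your sketch gives no mechanism for either.

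\textbf{The missing idea.} The paper avoids computing the sum altogether.
\begin{itemize}
\item Since $(\lqq,\{-,-\}_A)$ is a Lie algebra (Theorem \ref{thm:lq_Lie}) and $\theta(\psi),b_1\in\lqq$, the element $\{\theta_Y(g),b_1\}_A=\{\theta(\psi),b_1\}_A$ lies in $\lqq$. Condition (iv) is used exactly here. Hence this element is $\tau$-invariant.
\item By (i) and the Lie property of $\psi$, we have $(g\mid y_1^n)=(\psi\mid x_1^n)=0$ for all $n\geq1$. So every word of $\theta_Y(g)$ contains a letter $b_i$ with $i\geq2$.
\item The bracket $\{-,-\}_A$ preserves $\Q\langle b_1,b_2,\ldots\rangle$ (Lemma \ref{lem:bracket_b1b2..}) and is homogeneous for weight and depth. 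Hence $\{\theta_Y(g),b_1\}_A$ again has no $b_0$, and each of its words contains some $b_i$ with $i\geq2$.
\item By the definition of $\tau$, each such word is sent to a word containing $b_0$. So the element and its $\tau$-image have disjoint supports.
\end{itemize}
Together, $\tau$-invariance and disjoint supports force $\{\theta_Y(g),b_1\}_A=0$.
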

\begin{proof}
Since $\theta : (\ls, \{-, -\}) \to (\lqq, \{-, -\}_A)$ is a Lie algebra morphism, and since $\{\psi,x_1\}=0$ for any $\psi\in\QX$, thanks to Lemma \ref{lem:Ihara_with_x0_x1}; to prove that $\theta^{(1)} : (\ls \oplus \Q x_1, \{-, -\}) \to (\lqq, \{-, -\}_A)$ is a Lie algebra morphism, it then suffices to verify that
\[
    \{\theta(\psi),b_1\}_A=0, \quad \forall \psi \in \ls.
\]
First, observe that by Remark \ref{rem:bracket_on_b0_b1} the Lie bracket $\{-,-\}_A$ equals the Ihara bracket on $\Q\langle b_0,b_1\rangle$. \newline
Fix some element $\psi \in \ls$. As $\theta_X(\psi)\in \Q\langle b_0,b_1\rangle$, we deduce from Lemma \ref{lem:Ihara_with_x0_x1} that
\begin{align} \label{eq:thetaX_x_1} 
\{\theta_X(\psi),b_1\}_A=0.
\end{align}
Since $(\lqq,\{-,-\}_A)$ is a $\Q$-Lie algebra and $b_1, \theta(\psi)\in \lqq$, we deduce from \eqref{eq:thetaX_x_1}
\begin{align} \label{eq:(thetaY(phi),b1)_in_lq}
\{\theta_Y(\pi_Y(\psi)), b_1\}_A = \{\theta(\psi), b_1\}_A \in \lqq.
\end{align}
Observe that since $\psi \in \Lie(X)$ we have $(\psi\mid x_1^n)=0$ for $n\geq2$. Hence, $\psi\in\ls$ satisfies 
\[
    0 = (\psi \mid x_1^n) = (\pi_Y(\psi) \mid y_1^n) = (\theta_Y(\pi_Y(\psi)) \mid b_1^n), \quad \forall n \geq 1.
\]
Thus,
\begin{equation} \label{claim:letter_bi}
    \text{each word in } \theta_Y(\pi_Y(\psi))\in \Q\langle b_1,b_2,\ldots\rangle \text{ contains at least one letter } b_i \text{ with } i>1.
\end{equation}
Thanks to Lemma \ref{lem:bracket_b1b2..}, the element $\{\theta_Y(\pi_Y(\psi)),b_1\}_A$ is also contained in $\Q\langle b_1,b_2,\dots\rangle$ and as the Lie bracket $\{-,-\}_A$ is depth-homogeneous, it follows from \eqref{claim:letter_bi} that
\begin{equation} \label{claim:bracket_letter_bi}
    \text{ each word in } \{\theta_Y(\pi_Y(\psi)),b_1\}_A \text{ contains at least one letter } b_i \text{ with } i>1.    
\end{equation}
Therefore, by definition of $\tau$, it follows from \eqref{claim:bracket_letter_bi} that
\begin{equation} \label{claim:tau_letter_b0}
    \text{each word in } \tau\big(\{\theta_Y(\pi_Y(\psi)), b_1\}_A\big) \text{ must contain the letter } b_0.    
\end{equation}
Thus from \eqref{claim:bracket_letter_bi} and \eqref{claim:tau_letter_b0} we deduce, for any word $w \in \QB$, that
\begin{equation} \label{eq:equaliff0}
    (\{\theta_Y(\pi_Y(\psi)),b_1\}_A \mid w) = (\tau\big(\{\theta_Y(\pi_Y(\psi)), b_1\}_A\big) \mid w) \Longrightarrow (\{\theta_Y(\pi_Y(\psi)),b_1\}_A \mid w) = 0.
\end{equation}
On the other hand, by \eqref{eq:(thetaY(phi),b1)_in_lq} and the definition of $\lqq$, $\{\theta_Y(\pi_Y(\psi)),b_1\}_A$ is $\tau$-invariant, meaning that the left hand side of \eqref{eq:equaliff0} is satisfied. It then follows from the aforementioned implication that
\[
    \{\theta_Y(\pi_Y(\psi)),b_1\}_A = 0,    
\]
as it is zero for any word in $\QB$. Hence
\[\{\theta(\psi),b_1\}_A=\{\theta_Y(\pi_Y(\psi)), b_1\}_A=0.
\]
Finally, since $\theta$ is injective and the spaces $\operatorname{im} \theta = \theta^{(1)}(\ls)$ and $\Q b_1 = \theta^{(1)}(\Q x_1)$ have a trivial intersection, we obtain the injectivity of $\theta^{(1)}$.
\end{proof}

\begin{theorem} \label{thm:theta:stabDeltaY_to_stabtau} The map $\theta$ from \eqref{eq:def_theta} induces an injective $\Q$-Lie algebra morphism between the stabilizers $\stab_{\Lie(X)}(\Delta_Y)$ and $\stab_{\Lie(B)}(\tau)$. Namely,
\begin{align*}
\theta^{(10)} : \big(\stab_{\Lie(X)}(\Delta_Y), \{-, -\}\big) & \to \big(\stab_{\Lie(B)}(\tau), \{-, -\}_A\big) \\
\psi \oplus \lambda_1 x_1\oplus \lambda_0 x_0 & \mapsto \theta(\psi) + \lambda_1 b_1 \oplus \lambda_0 b_0,
\end{align*}
where one uses the equalities $\stab_{\Lie(X)}(\Delta_Y) = \ls \oplus \Q x_1 \oplus \Q x_0$ from Theorem \ref{thm:stab_equals_ls} and $\stab_{\Lie(B)}(\tau) = \lqq \oplus \Q b_0$ from Theorem \ref{thm:stab_equal_lq}.
\end{theorem}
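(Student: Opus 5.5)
The proof extends Proposition \ref{prop:theta_ex} by the central element $x_0$. First we check that $\theta^{(10)}$ is well defined with values in $\stab_{\Lie(B)}(\tau)$. By Theorem \ref{thm:stab_equals_ls}, every element of $\stab_{\Lie(X)}(\Delta_Y)$ decomposes uniquely as $\psi+\lambda_1x_1+\lambda_0x_0$ with $\psi\in\ls$. By Proposition \ref{prop:theta_ex}, $\theta(\psi)+\lambda_1 b_1=\theta^{(1)}(\psi+\lambda_1x_1)$ lies in $\lqq$. Since $\lambda_0b_0\in\Q b_0$, Theorem \ref{thm:stab_equal_lq} gives that the image lies in $\lqq\oplus\Q b_0=\stab_{\Lie(B)}(\tau)$. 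The map is clearly $\Q$-linear.

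Next we show that $\theta^{(10)}$ is a Lie morphism. Take two elements $\Psi=\psi+\lambda_1x_1+\lambda_0x_0$ and $\Psi'=\psi'+\lambda_1'x_1+\lambda_0'x_0$. By Lemma \ref{lem:Ihara_with_x0_x1}, $x_0$ is central for the Ihara bracket, so
\[
\{\Psi,\Psi'\}=\{\psi+\lambda_1x_1,\psi'+\lambda_1'x_1\}.
\]
This element lies in $\ls\oplus\Q x_1$; indeed it lies in $\ls$, by the Corollary following Theorem \ref{thm:ls_Lie}. Hence
\[
\theta^{(10)}(\{\Psi,\Psi'\})=\theta^{(1)}(\{\psi+\lambda_1x_1,\psi'+\lambda_1'x_1\}).
\]
By Proposition \ref{prop:theta_ex}, this equals $\{\theta^{(1)}(\psi+\lambda_1x_1),\theta^{(1)}(\psi'+\lambda_1'x_1)\}_A$. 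By Lemma \ref{lem:Ari_with_b0}, $b_0$ is central for $\{-,-\}_A$, so adding $\lambda_0b_0$ and $\lambda_0'b_0$ does not change this bracket. It therefore equals $\{\theta^{(10)}(\Psi),\theta^{(10)}(\Psi')\}_A$, as required.

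Finally we prove injectivity. Suppose $\theta^{(1)}(\psi+\lambda_1x_1)+\lambda_0b_0=0$. The first summand lies in $\lqq$. Every element of $\lqq$ has $(\,\cdot\mid b_0)=0$ by condition (i) of Definition \ref{def:lq}, so $\lqq\cap\Q b_0=0$; the same conclusion follows from the direct sum in Theorem \ref{thm:stab_equal_lq}. Hence $\lambda_0=0$ and $\theta^{(1)}(\psi+\lambda_1x_1)=0$, and injectivity of $\theta^{(1)}$ from Proposition \ref{prop:theta_ex} gives $\psi=0$ and $\lambda_1=0$.

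There is no real obstacle here. The substantive work, showing $\{\theta(\psi),b_1\}_A=0$, was already done in Proposition \ref{prop:theta_ex}. The only point needing care is that both central elements, $x_0$ and $b_0$, drop out of every bracket, so that the bracket computation reduces entirely to $\theta^{(1)}$.
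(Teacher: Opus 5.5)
Your proposal is correct and follows essentially the same route as the paper. Both extend the injective morphism $\theta^{(1)}$ of Proposition~\ref{prop:theta_ex} using the centrality of $x_0$ and $b_0$ (Lemmas~\ref{lem:Ihara_with_x0_x1} and~\ref{lem:Ari_with_b0}), and both get injectivity from $\lqq\cap\Q b_0=0$; your explicit checks of well-definedness and of the bracket computation just spell out steps the paper leaves implicit.
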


\begin{proof} By Proposition \ref{prop:theta_ex}, we have an injective Lie algebra morphism
\[\theta^{(1)} : \ls \oplus \Q x_1 \hookrightarrow \lqq, \quad \psi \oplus \lambda_1 x_1 \mapsto \theta(\psi) + \lambda_1 b_1.\]
Thanks to Lemmas \ref{lem:Ihara_with_x0_x1} and \ref{lem:Ari_with_b0}, it follows that $\{-,x_0\}$ and $\{-,b_0\}_A$ are zero maps. Therefore, 
the map $\theta^{(1)}$ extends to a Lie algebra morphism $\theta^{(10)}$ defined as above. \newline
Finally, since the spaces $\operatorname{im} \theta^{(1)}=\theta^{(10)}(\ls\oplus \Q x_1)$ and $\Q b_0=\theta^{(10)}(\Q x_0)$ have a trivial intersection, the injectivity of the map $\theta^{(10)}$ follows from the injectivity of $\theta^{(1)}$.
\end{proof}

\appendix 

\section{Review of Lie algebra actions}

\begin{definition}
    Let $(\mathfrak{g}, \langle-,-\rangle)$ be a $\Q$-Lie algebra and $V$ be a $\Q$-vector space.
    The Lie algebra $\mathfrak{g}$ acts on the space $V$ by endomorphisms if there is a $\Q$-Lie algebra morphism
    \[
        \mathrm{end} : (\mathfrak{g}, \langle-,-\rangle) \to \mathrm{End}_\Q(V), \quad \psi \mapsto \mathrm{end}_\psi
    \]
    where the space $\mathrm{End}_\Q(V)$ of endomorphisms of $V$ is equipped with the bracket
    \[
        [u, v] := u \circ v - v \circ u, \quad \forall u, v \in \mathrm{End}_\Q(V).
    \]
\end{definition}

\begin{propdef} \label{propdef:generic_stab}
    Let $(\mathfrak{g}, \langle-,-\rangle)$ be a $\Q$-Lie algebra acting on a $\Q$-vector space $V$ by endomorphisms $\mathrm{end} : (\mathfrak{g}, \langle-,-\rangle) \to \mathrm{End}_\Q(V)$.
    Let $v \in V$. The space
    \[
        \stab(v) := \{ \psi \in \mathfrak{g} \mid \mathrm{end}_\psi(v) = 0 \}
    \]
    is a Lie subalgebra of $\mathfrak{g}$ called the \emph{stabilizer Lie algebra} of the element $v$. 
\end{propdef}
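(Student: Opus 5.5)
The statement to prove is Proposition-Definition \ref{propdef:generic_stab}: for a Lie algebra $(\mathfrak{g},\langle-,-\rangle)$ acting on $V$ via $\mathrm{end}$ and a fixed $v\in V$, the set $\stab(v)=\{\psi\in\mathfrak{g}\mid \mathrm{end}_\psi(v)=0\}$ is a Lie subalgebra of $\mathfrak{g}$. The proof uses only linearity and the morphism property of $\mathrm{end}$.

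First, $\stab(v)$ is a $\Q$-linear subspace. The map $\mathfrak{g}\to V$, $\psi\mapsto \mathrm{end}_\psi(v)$, is the composition of the linear map $\mathrm{end}:\mathfrak{g}\to\mathrm{End}_\Q(V)$ with evaluation at $v$, which is also linear. Hence $\stab(v)$ is the kernel of a linear map, and so it is a subspace containing $0$.

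Second, $\stab(v)$ is closed under the bracket. Take $\psi_1,\psi_2\in\stab(v)$. Because $\mathrm{end}$ is a Lie algebra morphism into $(\mathrm{End}_\Q(V),[-,-])$,
\[
\mathrm{end}_{\langle\psi_1,\psi_2\rangle}(v)=\mathrm{end}_{\psi_1}\big(\mathrm{end}_{\psi_2}(v)\big)-\mathrm{end}_{\psi_2}\big(\mathrm{end}_{\psi_1}(v)\big).
\]
Both inner terms $\mathrm{end}_{\psi_2}(v)$ and $\mathrm{end}_{\psi_1}(v)$ vanish by assumption. Each outer endomorphism is linear, so it sends $0$ to $0$, and the right-hand side is zero. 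Thus $\langle\psi_1,\psi_2\rangle\in\stab(v)$.

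There is no real obstacle here. The one point to check in the paper's applications is that the displayed actions on $\mathrm{Cop}(\QY)$ and $\mathrm{End}(\QB^0)$ really are Lie algebra morphisms, so that the hypothesis holds. For $\mathrm{End}(\QB^0)$ this is the computation already carried out in the paper. For $\mathrm{Cop}(\QY)$ the same computation works: write $S=s^Y_\psi\otimes\id+\id\otimes s^Y_\psi$. The map $\psi\mapsto S$ is a Lie algebra morphism by Proposition \ref{prop:Lib_act_QY}. The commutator of $D\mapsto S\circ D-D\circ s^Y_\psi$ for $\psi_1$ and $\psi_2$ then equals $[S_1,S_2]\circ D-D\circ[s^Y_{\psi_1},s^Y_{\psi_2}]$, because the two cross terms $S_1\circ D\circ s^Y_{\psi_2}$ and $S_2\circ D\circ s^Y_{\psi_1}$ cancel.
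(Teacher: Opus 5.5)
Your proof is correct and follows the paper's argument: closure under the bracket comes from evaluating $\mathrm{end}_{\langle\psi,\phi\rangle}=[\mathrm{end}_\psi,\mathrm{end}_\phi]$ at $v$. You also make explicit that $\stab(v)$ is a linear subspace, as the kernel of $\psi\mapsto\mathrm{end}_\psi(v)$, which the paper leaves implicit, and your side check that the action on $\mathrm{Cop}(\QY)$ is a Lie algebra morphism is also correct.
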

\begin{proof}
    For $\psi, \phi \in \stab(v)$, one has
    \[
        \mathrm{end}_{\langle \psi,\phi\rangle}(v) = \mathrm{end}_\psi \circ \mathrm{end}_\phi(v) - \mathrm{end}_\phi \circ \mathrm{end}_\psi(v) = \mathrm{end}_\psi(0) - \mathrm{end}_\phi(0) = 0,
    \]
    thus proving that $\langle \psi,\phi\rangle \in \stab(v)$.
\end{proof}

For a $\Q$-algebra $A$, denote by $\mathrm{Der}_{\Q{\text-}\mathsf{alg}}(A)$ the space of derivations of algebras, that is, linear endomorphisms $\delta$ of $A$ satisfying
\[
    \delta(ab) = \delta(a) b + a \delta(b), \quad \forall a,b \in A.
\]
One immediately checks that $\mathrm{Der}_{\Q{\text-}\mathsf{alg}}(A)$ is a $\Q$-Lie algebra for the bracket
\[
    [\delta_1, \delta_2] := \delta_1 \circ \delta_2 - \delta_2 \circ \delta_1.
\]

\begin{definition}
    Let $(\mathfrak{g}, \langle-,-\rangle)$ be $\Q$-Lie algebra and $A$ be $\Q$-algebra. The Lie algebra $\mathfrak{g}$ acts on the algebra $A$ by derivations if there is a $\Q$-Lie algebra morphism
    \[
        \mathrm{der} : (\mathfrak{g}, \langle-,-\rangle) \to \mathrm{Der}_{\Q{\text-}\mathsf{alg}}(A), \quad \psi \mapsto \mathrm{der}_\psi.
    \]
\end{definition}

\begin{propdef}[{\cite[Proposition 2.2.]{elm}}]
Let $(\mathfrak{g}, \langle-,-\rangle)$ be a $\Q$-Lie algebra and $\triangleright : \mathfrak{g} \times \mathfrak{g} \to \mathfrak{g}$ be a bilinear map. Set
\begin{equation*} 
\langle \psi, \phi \rangle_\triangleright := \langle \psi, \phi \rangle + \psi \triangleright \phi - \phi \triangleright \psi,\qquad \psi,\phi\in\mathfrak{g}.
\end{equation*}
If for all $\psi,\phi,\lambda\in\mathfrak{g}$
\begin{enumerate}[label=(\roman*), leftmargin=*]
\item $\psi\ \triangleright$ is a derivation on $(\mathfrak{g}, \langle-,-\rangle)$, \label{post-lie1}
\item $\langle\psi,\phi\rangle_\triangleright \triangleright \lambda=(\psi\triangleright \phi-\phi\triangleright \psi)\triangleright \lambda$, \label{post-lie2}
\end{enumerate}
then $(\mathfrak{g},\langle-,-\rangle,\triangleright)$ is called a \emph{post-Lie algebra}. In this case, the pair $(\mathfrak{g}, \langle-,-\rangle_\triangleright)$ forms a Lie algebra. \label{propdef:post-Lie}
\end{propdef}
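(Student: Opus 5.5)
The plan is to verify the three Lie algebra axioms for $\langle-,-\rangle_\triangleright$ directly. Bilinearity is inherited from the bilinearity of $\langle-,-\rangle$ and of $\triangleright$. Antisymmetry is visible from the definition, since both $\langle\psi,\phi\rangle$ and $\psi\triangleright\phi-\phi\triangleright\psi$ change sign when $\psi$ and $\phi$ are exchanged. All the work is in the Jacobi identity.

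For this I would read axiom \ref{post-lie2} in its standard post-Lie form,
\[
\langle\psi,\phi\rangle_\triangleright\triangleright\lambda=\psi\triangleright(\phi\triangleright\lambda)-\phi\triangleright(\psi\triangleright\lambda).
\]
This says that $\psi\mapsto(\psi\triangleright-)$ is compatible with brackets. It is also exactly how the axiom is used in the paper: $\partial_{\{\psi_1,\psi_2\}_A}=[\partial_{\psi_1},\partial_{\psi_2}]$, and similarly for $d_\psi$. The first thing I would do is make this interpretation of \ref{post-lie2} explicit, because the printed right-hand side is ambiguous.

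Fix $a,b,c\in\mathfrak g$ and write $\circlearrowleft$ for the cyclic sum over $(a,b,c)$. Expanding the outer bracket gives
\[
\langle\langle a,b\rangle_\triangleright,c\rangle_\triangleright=\langle\langle a,b\rangle,c\rangle+\langle a\triangleright b-b\triangleright a,c\rangle+\langle a,b\rangle_\triangleright\triangleright c-c\triangleright\langle a,b\rangle-c\triangleright(a\triangleright b)+c\triangleright(b\triangleright a).
\]
I would then sort the cyclic sum of these terms into three groups.

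\textbf{Group 1.} The cyclic sum of $\langle\langle a,b\rangle,c\rangle$ vanishes by the Jacobi identity for $\langle-,-\rangle$.

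\textbf{Group 2.} These are the terms mixing $\langle-,-\rangle$ and $\triangleright$. By axiom \ref{post-lie1},
\[
c\triangleright\langle a,b\rangle=\langle c\triangleright a,b\rangle+\langle a,c\triangleright b\rangle.
\]
So this group is
\[
\circlearrowleft\big(\langle a\triangleright b,c\rangle-\langle b\triangleright a,c\rangle-\langle c\triangleright a,b\rangle+\langle c\triangleright b,a\rangle\big).
\]
Under cyclic relabelling, $\langle c\triangleright a,b\rangle$ runs over the same terms as $\langle a\triangleright b,c\rangle$. Likewise, $\langle c\triangleright b,a\rangle$ runs over the same terms as $\langle b\triangleright a,c\rangle$. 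Hence group 2 cancels.

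\textbf{Group 3.} This leaves
\[
\circlearrowleft\big(\langle a,b\rangle_\triangleright\triangleright c-c\triangleright(a\triangleright b)+c\triangleright(b\triangleright a)\big).
\]
Substituting axiom \ref{post-lie2} gives
\[
\circlearrowleft\big(a\triangleright(b\triangleright c)-b\triangleright(a\triangleright c)-c\triangleright(a\triangleright b)+c\triangleright(b\triangleright a)\big).
\]
Every iterated term $x\triangleright(y\triangleright z)$, with $\{x,y,z\}=\{a,b,c\}$, appears in this sum exactly once with a plus sign and once with a minus sign. So group 3 also cancels.

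The main obstacle is purely bookkeeping. One has to track the signs and cyclic relabellings carefully in group 2, and confirm that the form of \ref{post-lie2} used is the one that makes group 3 cancel.
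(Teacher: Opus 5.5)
Your proof is correct. The paper gives no argument of its own here; it simply defers to \cite[Proposition 2.2]{elm}. Your direct Jacobi verification is therefore a self-contained proof rather than a variant of the paper's.

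The three groups cancel as you claim. In group 2, after using (i), the term $+\langle a\triangleright b,c\rangle$ cancels cyclically against $-\langle c\triangleright a,b\rangle$, and $-\langle b\triangleright a,c\rangle$ cancels against $+\langle c\triangleright b,a\rangle$. In group 3, each of the six products $x\triangleright(y\triangleright z)$ occurs once with each sign.

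Reading (ii) as $\langle\psi,\phi\rangle_\triangleright\triangleright\lambda=\psi\triangleright(\phi\triangleright\lambda)-\phi\triangleright(\psi\triangleright\lambda)$ is not just convenient but necessary. As printed, (ii) reduces by bilinearity to $\langle\psi,\phi\rangle\triangleright\lambda=0$, and under that reading the statement is false. For abelian $\mathfrak g$ both axioms become vacuous, and $\langle-,-\rangle_\triangleright$ is then the commutator of an arbitrary bilinear product, which need not satisfy Jacobi. The operator form is the standard post-Lie axiom. It is also what the paper actually uses in Proposition \ref{prop:generic_act_end}\ref{generic_act_der} and when it applies the statement via $\partial_{\{\psi_1,\psi_2\}_A}=[\partial_{\psi_1},\partial_{\psi_2}]$.

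For comparison, there is a structural route that avoids the cyclic bookkeeping, and the paper's appendix essentially contains it. The computation in Proposition \ref{prop:generic_act_end}\ref{generic_act_end} shows $[\mathrm{end}_\psi,\mathrm{end}_\phi]=\mathrm{end}_{\langle\psi,\phi\rangle_\triangleright}$. It uses only (i), (ii) and associativity in $U(\mathfrak g)$, not the Jacobi identity for $\langle-,-\rangle_\triangleright$. Since $\mathrm{end}_\psi(1)=\psi$ and $\mathfrak g\hookrightarrow U(\mathfrak g)$, the map $\psi\mapsto\mathrm{end}_\psi$ is injective. Hence Jacobi for $\langle-,-\rangle_\triangleright$ is pulled back from the commutator bracket on $\mathrm{End}_\Q(U(\mathfrak g))$. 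Equivalently, $x\mapsto(x,x\triangleright-)$ embeds $(\mathfrak g,\langle-,-\rangle_\triangleright)$ into the semidirect product $\mathfrak g\rtimes\mathrm{Der}(\mathfrak g,\langle-,-\rangle)$.

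That route is shorter and explains why the bracket is Lie. Yours has the advantage of being completely elementary: it needs nothing beyond the two axioms and the Jacobi identity for $\langle-,-\rangle$.
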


For a more comprehensive treatment of post‑Lie algebras, the reader may refer to §3 of \cite{BCK23}.

\begin{proposition} \label{prop:generic_act_end}
Let $(\mathfrak{g}, \langle-,-\rangle,\triangleright)$ be a post-Lie algebra. Denote by $U(\mathfrak{g})$ the universal enveloping $\Q$-algebra of $(\mathfrak{g}, \langle-,-\rangle)$. 
Then
\begin{enumerate}[label=(\alph*), leftmargin=*]
\item \label{generic_act_der} The $\Q$-Lie algebra $(\mathfrak{g}, \langle-,-\rangle_\triangleright)$ acts on the $\Q$-algebra $U(\mathfrak{g})$ by derivations
\[
\mathrm{der} : (\mathfrak{g}, \langle-,-\rangle_\triangleright) \to \mathrm{Der}_{\Q{\text-}\mathsf{alg}}(U(\mathfrak{g})), \quad \mathrm{der}_\psi := \psi \triangleright -,
\]
\item \label{generic_act_end} The $\Q$-Lie algebra $(\mathfrak{g}, \langle-,-\rangle_\triangleright)$ acts on the $\Q$-linear space $U(\mathfrak{g})$ by endomorphisms
\[
\mathrm{end} : (\mathfrak{g}, \langle-,-\rangle_\triangleright) \to \mathrm{End}_\Q(U(\mathfrak{g})), \quad \mathrm{end}_\psi := \ell_\psi + \mathrm{der}_\psi, 
\]
for any $\psi \in \mathfrak{g}$, where $\ell_\psi \in \mathrm{End}_\Q(U(\mathfrak{g}))$ is the left multiplication in the algebra $U(\mathfrak{g})$ by the element $\psi$. 
\end{enumerate}
\end{proposition}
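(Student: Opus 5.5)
The statement to be proved is Proposition \ref{prop:generic_act_end}. I would prove part \ref{generic_act_der} first and derive part \ref{generic_act_end} from it.

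\textbf{Part \ref{generic_act_der}.} By the universal property of $U(\mathfrak{g})$, for each $\psi\in\mathfrak{g}$ the derivation $\psi\triangleright-$ of the Lie algebra $(\mathfrak{g},\langle-,-\rangle)$, given by condition \ref{post-lie1}, extends uniquely to an algebra derivation $\mathrm{der}_\psi$ of $U(\mathfrak{g})$. The extension is well defined because the ideal generated by $x\otimes y-y\otimes x-\langle x,y\rangle$ in the tensor algebra is preserved, and this holds exactly by \ref{post-lie1}. To see that $\psi\mapsto\mathrm{der}_\psi$ is a Lie morphism for $\langle-,-\rangle_\triangleright$, note that both $[\mathrm{der}_\psi,\mathrm{der}_\phi]$ and $\mathrm{der}_{\langle\psi,\phi\rangle_\triangleright}$ are derivations of $U(\mathfrak{g})$. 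It therefore suffices to compare them on the generators $\lambda\in\mathfrak{g}$, that is, to show
\[
\psi\triangleright(\phi\triangleright\lambda)-\phi\triangleright(\psi\triangleright\lambda)=\langle\psi,\phi\rangle_\triangleright\triangleright\lambda .
\]
The right-hand side equals $\langle\psi,\phi\rangle\triangleright\lambda+(\psi\triangleright\phi-\phi\triangleright\psi)\triangleright\lambda$. Condition \ref{post-lie2} as written appears to identify the two pieces and lose the $\langle\psi,\phi\rangle\triangleright\lambda$ term. So the real content needed here is the standard post-Lie axiom
\[
\langle\psi,\phi\rangle\triangleright\lambda=a_\triangleright(\psi,\phi,\lambda)-a_\triangleright(\phi,\psi,\lambda),
\]
where $a_\triangleright(\psi,\phi,\lambda)=\psi\triangleright(\phi\triangleright\lambda)-(\psi\triangleright\phi)\triangleright\lambda$ is the associator. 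I would check this against the intended meaning of \ref{post-lie2}, i.e.\ read it as the statement that $\psi\mapsto\psi\triangleright$ intertwines $\langle-,-\rangle_\triangleright$ with the commutator. Pinning down this reading of the hypotheses is the main obstacle, since the rest is formal.

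\textbf{Part \ref{generic_act_end}.} I would compute $[\mathrm{end}_\psi,\mathrm{end}_\phi]$ by expanding it into four pieces:
\[
[\ell_\psi,\ell_\phi]+[\ell_\psi,\mathrm{der}_\phi]+[\mathrm{der}_\psi,\ell_\phi]+[\mathrm{der}_\psi,\mathrm{der}_\phi].
\]
The first piece is $\ell_{\psi\phi-\phi\psi}=\ell_{\langle\psi,\phi\rangle}$, since $\psi\phi-\phi\psi=\langle\psi,\phi\rangle$ in $U(\mathfrak{g})$. For the mixed pieces, the derivation property gives $[\mathrm{der}_\psi,\ell_\phi](u)=\mathrm{der}_\psi(\phi u)-\phi\,\mathrm{der}_\psi(u)=(\psi\triangleright\phi)u$. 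Hence these two pieces together equal $\ell_{\psi\triangleright\phi-\phi\triangleright\psi}$. The last piece is $\mathrm{der}_{\langle\psi,\phi\rangle_\triangleright}$ by part \ref{generic_act_der}. Summing,
\[
[\mathrm{end}_\psi,\mathrm{end}_\phi]=\ell_{\langle\psi,\phi\rangle_\triangleright}+\mathrm{der}_{\langle\psi,\phi\rangle_\triangleright}=\mathrm{end}_{\langle\psi,\phi\rangle_\triangleright},
\]
which is the required Lie morphism.
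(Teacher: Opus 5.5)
Your proposal is correct and follows the paper's proof. Part (a) checks $[\mathrm{der}_\psi,\mathrm{der}_\phi]=\mathrm{der}_{\langle\psi,\phi\rangle_\triangleright}$ on generators via axiom (ii), and you additionally justify, using (i), that $\psi\triangleright-$ extends to a derivation of $U(\mathfrak{g})$, which the paper takes for granted. Part (b) is the same four-term expansion using the derivation property.

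You read (ii) as $\langle\psi,\phi\rangle_\triangleright\triangleright\lambda=\psi\triangleright(\phi\triangleright\lambda)-\phi\triangleright(\psi\triangleright\lambda)$. This is exactly what the paper's proof invokes, and what it verifies in its applications, e.g.\ $\partial_{\{\psi_1,\psi_2\}_A}=[\partial_{\psi_1},\partial_{\psi_2}]$. As literally printed, (ii) only says $\langle\psi,\phi\rangle\triangleright\lambda=0$, so you were right to flag it.
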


\begin{proof}
\begin{enumerate}[label=(\alph*), leftmargin=*]
\item For any $\psi \in \mathfrak{g}$, we have $\mathrm{der}_\psi \in \mathrm{Der}_{\Q{\text-}\mathsf{alg}}$ thanks to condition \ref{post-lie1} in Proposition-Definition \ref{propdef:post-Lie}; and for $\psi, \phi \in \mathfrak{g}$, the identity
\[
\mathrm{der}_{\langle \psi, \phi\rangle} = [\mathrm{der}_\psi, \mathrm{der}_\phi]
\]
follows from condition \ref{post-lie2}. 
\item Let $\psi, \phi \in \mathfrak{g}$ and $w \in U(\mathfrak{g})$. We have
\begin{align*}
[\mathrm{end}_\psi, \mathrm{end}_\phi](w) & = \mathrm{end}_\psi \circ \mathrm{end}_\phi(w) - \mathrm{end}_\phi \circ \mathrm{end}_\psi(w) \\
& = (\ell_\psi + \mathrm{der}_\psi) \circ (\ell_\phi + \mathrm{der}_\phi)(w) - (\ell_\phi + \mathrm{der}_\phi) \circ (\ell_\psi + \mathrm{der}_\psi)(w) \\
& = \psi \phi w + \psi \mathrm{der}_\phi(w) + \mathrm{der}_\psi(\phi w) + \mathrm{der}_\psi \circ \mathrm{der}_\phi(w) \\
& \hspace{0.4cm} - \phi \psi w - \phi \mathrm{der}_\psi(w) - \mathrm{der}_\phi(\psi w) - \mathrm{der}_\phi \circ \mathrm{der}_\psi(w) \\
& = (\psi \phi - \phi \psi) w + (\mathrm{der}_\psi \circ \mathrm{der}_\phi - \mathrm{der}_\phi \circ \mathrm{der}_\psi)(w) + \psi \mathrm{der}_\phi(w) \\
& \hspace{0.4cm} + \phi \mathrm{der}_\psi(w) + \mathrm{der}_\psi(\phi) w - \phi \mathrm{der}_\psi(w) - \psi \mathrm{der}_\phi(w) - \mathrm{der}_\phi(\psi) w \\
& = (\psi \phi - \phi \psi + \mathrm{der}_\psi(\phi) - \mathrm{der}_\phi(\psi)) w + (\mathrm{der}_\psi \circ \mathrm{der}_\phi - \mathrm{der}_\phi \circ \mathrm{der}_\psi)(w) \\
& = (\langle \psi, \phi\rangle + \mathrm{der}_\psi(\phi) - \mathrm{der}_\phi(\psi)) w + [\mathrm{der}_\psi, \mathrm{der}_\phi](w) \\
& = \langle \psi, \phi\rangle_\triangleright w + \mathrm{der}_{\langle\psi, \phi\rangle_\triangleright}(w) = \mathrm{end}_{\langle\psi, \phi\rangle_\triangleright}(w),
\end{align*}
where the fourth equality follows from the fact that $\mathrm{der}_\psi$ and $\mathrm{der}_\phi$ are derivations; the sixth one from the fact that in $U(\mathfrak{g})$, we have $\langle \psi, \phi\rangle = \psi \phi - \phi \psi$; and the seventh one from \ref{generic_act_der}. \qedhere 
\end{enumerate}
\end{proof}

\bibliographystyle{amsalpha}
\bibliography{main}
\end{document}